\numberwithin{equation}{section}  
\newcommand{\C}{\mathbb{C}}
\newcommand{\R}{\mathbb{R}}
\newcommand{\Q}{\mathbb{Q}}
\newcommand{\Z}{\mathbb{Z}}
\newcommand{\ii}{\operatorname{i}}
\newcommand{\kk}{\mathfrak{k}}
\newcommand{\ch}{\operatorname{ch}}
\newcommand{\Rea}{\operatorname{Re}}
\newcommand{\Imm}{\operatorname{Im}}
\newcommand{\del}{\partial}
\newcommand{\PP}{\mathbb{P}}
\newcommand{\cY}{\mathcal{Y}}
\newcommand{\cL}{\mathcal{L}}
\newcommand{\cM}{\mathcal{M}}
\newcommand{\cS}{\mathcal{S}}
\newcommand{\cH}{\mathcal{H}}
\newcommand{\cT}{\mathcal{T}}
\newcommand{\cU}{\mathcal{U}}
\newcommand{\olo}{\mathcal{O}}
\newcommand{\Hom}{\operatorname{Hom}}
\newcommand{\End}{\operatorname{End}}
\newcommand{\Bl}{\operatorname{Bl}}
\newcommand{\GM}{\operatorname{GM}}
\newcommand{\Crit}{\operatorname{Crit}}
\newcommand{\pd}{\operatorname{PD}}
\newcommand{\FS}{D\!\operatorname{FS}}
\newcommand{\Gcl}{\widehat{\Gamma}}
\newcommand{\syz}{\operatorname{SYZ}}
\newcommand{\dhym}{\operatorname{dHYM}}
\newcommand{\codim}{\operatorname{codim}}
\newcommand{\Coh}{\operatorname{Coh}}
\newtheorem{thm}{Theorem}[section]
\newtheorem{prop}[thm]{Proposition}
\newtheorem{cor}[thm]{Corollary}
\newtheorem{conj}[thm]{Conjecture}
\theoremstyle{definition}
\newtheorem{definition}[thm]{Definition}
\theoremstyle{remark}
\newtheorem{exm}[thm]{Example}
\newtheorem{rmk}[thm]{Remark}
\title{Nakai-Moishezon criteria and the toric Thomas-Yau conjecture}
\author{Jacopo Stoppa}
\date{\today}
\begin{document}

\maketitle

\begin{abstract} We consider a class of Lagrangian sections $\cL$ contained in certain Calabi-Yau Lagrangian fibrations (mirrors of toric weak Fano manifolds). We prove that a form of the Thomas-Yau conjecture holds in this case: $\cL$ is Hamiltonian isotopic to a special Lagrangian section in this class if  and only if a stability condition holds, in the sense of a slope inequality on objects in a set of exact triangles in the Fukaya-Seidel category. This agrees with general proposals by Li.  

We use the SYZ transform, the toric $\widehat{\Gamma}$-theorem, and toric homological mirror symmetry in order to reduce the statement to one about supercritical deformed Hermitian Yang-Mills connections, known as the Nakai-Moishezon criterion. 

As an application, we prove that, on the mirror of a toric weak del Pezzo surface, if $\cL$ defines a Bridgeland stable object in the Fukaya-Seidel category in a natural sense, then it is Hamiltonian isotopic to a special Lagrangian section in the class. The converse also holds for the mirror of $\Bl_p \PP^2$ and $\Bl_{p, q} \PP^2$, and always holds assuming a conjecture of Arcara and Miles. When $\cL$ is Bridgeland unstable, we obtain a morphism from $\cL$ to a weak solution of the special Lagrangian equation with phase angle satisfying a minimality condition. These results are consistent with general conjectures due to Joyce.

We discuss some generalisations, including a weaker analogue of our main result for general projective toric manifolds, and a similar obstruction, related to Lagrangian multi-sections, in a special case.
\end{abstract}

\section{Introduction}
\subsection{Toric special Lagrangian sections and stability} 
The well-known conjectures of Thomas and Yau \cite{Thomas_MomentMirror, ThomasYau} relate the existence of special Lagrangian submanifolds in Calabi-Yau manifolds to appropriate stability conditions. These conjectures have been updated and expanded in the works of Joyce \cite{Joyce_ThomasYau} and Li \cite{YangLi_ThomasYau}, which also contain many additional references. 

Here we study a version of the Thomas-Yau conjecture in a toric case, namely, for a class of Lagrangian sections of certain Calabi-Yau Lagrangian fibrations, known as toric Landau-Ginzburg models of weak Fano type. 

This is the setup considered by Collins and Yau in \cite{CollinsYau_dHYM_momentmap}, Section 9, which provides important motivation for our work. 

Our other main motivation comes from the results of Li \cite{YangLi_ThomasYau}  (Sections 3.3 - 3.5) showing that, on a Stein almost Calabi-Yau manifold, at least under certain technical assumptions, if $L$, $L_1$, $L_2$ are compact, almost calibrated Lagrangians, with $L$ special Lagrangian, and
\begin{equation*}
L_1 \to L \to L_2 \to L_1[1]
\end{equation*}
is an exact triangle in the relevant Fukaya category, then we have
\begin{equation}\label{LiPhaseInequ}
\arg \int_{L_1} \Omega \leq \arg \int_L \Omega \leq \arg \int_{L_2} \Omega. 
\end{equation}
As Li emphasises (\cite{YangLi_ThomasYau} Sections 1 and 3.5), this is a numerical (cohomological) condition, and here we focus on such numerical aspects.

Let $X$ be a compact, $n$-dimensional K\"ahler manifold. We say that $X$ is weak Fano if $-K_X$ is big and nef. 

Suppose $X$ is a toric weak Fano manifold. Following Givental \cite{Givental_toric}, the \emph{mirror toric Landau-Ginzburg model} corresponding to $X$ is a trivial fibration $\cY \to \cM$ by algebraic tori $\cY_q \cong (\C^*)^n$, endowed with a regular function
\begin{equation*}
W\!: \cY \to \C, 
\end{equation*}
known as the Landau-Ginzburg (LG) potential. Recall that, at least when $X$ is Fano, $W$ can be computed explicitly from a toric fan for $X$ (see Remark \ref{GiventalFormula}). A (possibly complexified) K\"ahler class $[\omega_0]$ defines a point $q \in \cM$, and we denote by 
\begin{equation*}
W(\omega_0) := W|_{\cY_q} 
\end{equation*}
the corresponding LG potential on a fibre, $W(\omega_0)\!: \cY_q \to \C$. In the following, we write 
\begin{equation*}
\Omega_0 = \frac{d x_1}{x_1} \wedge \cdots \wedge \frac{d x_n}{x_n}
\end{equation*}
for the standard holomorphic volume form on the algebraic torus $\cY_q \cong (\C^*)^n$ in a fixed trivialisation of the mirror family $\cY \to \cM$.
We will provide some background on this correspondence in Section \ref{MirrorSec}, following \cite{CoatesCortiIritani_hodge, Iritani_survey}. 

Let $X$ be a toric weak Fano manifold, endowed with a fixed torus-invariant K\"ahler form $ \omega_0$. Let $L \to X$ be a holomorphic line bundle on $X$, endowed with a torus-invariant Hermitian metric $h$. By the classical work of Leung, Yau and Zaslow \cite{LeungYauZaslow} (generalised to toric compact K\"ahler manifolds, as recalled e.g. in \cite{Chan_survey} and in \cite{CollinsYau_dHYM_momentmap}, Section 9), one can construct a corresponding non-compact Lagrangian submanifold of a fixed bounded domain $\cU_q$ in $(\C^*)^n$,
\begin{equation*}
\cL = \cL(L, h) \subset \cU_q \subset \cY_q \cong (\C^*)^n,
\end{equation*} 
with respect to a mirror symplectic form $\omega^{\vee}$ on $\cU_q $.

This is a section of a natural Lagrangian torus fibration $\cY_q \supset \cU_q \to \Delta^0$ corresponding to $\omega_0$, where $\Delta^0$ denotes the interior of the momentum polytope. The Lagrangian section $\cL(L, h)$ is known as the Strominger-Yau-Zaslow (SYZ) transform of $(L, h)$. Under this correspondence, changes in the metric $h$ preserve the Hamiltonian isotopy class of $\cL$. 

Moreover, the K\"ahler form $\omega_0$ on $X$ corresponds to a distinguished holomorphic volume form $\Omega(\omega_0)$ on $\cU_q$. We say that $\cL$ is special Lagrangian if, for some constant phase $e^{-\ii \phi}$, we have 
\begin{equation*}
\Imm\left(e^{-\ii \phi} \Omega(\omega_0)|_{\cL}\right) = 0 
\end{equation*}
\begin{rmk}
When $n > 2$, we will work with special Lagrangians satisfying a crucial assumption, known as \emph{supercritical phase}, which we recall in Section \ref{SupercritSec}, under which the special Lagrangian equation is known to be better behaved (these are solutions for which the lifted Lagrangian phase is sufficiently large).
\end{rmk}
It is known that $\cL(L, h)$ defines an object in a category of Lagrangian submanifolds of $\cY_q$, the Fukaya-Seidel category, denoted by $\FS(\cY_q, W( \omega_0))$, and that the correspondence between $L$ and the class of $\cL(L, h)$ is compatible with \emph{homological mirror symmetry}, i.e. with the equivalence of categories   
\begin{equation}\label{HMSIntro}
D^b(X) \cong \FS(\cY_q, W(\omega_0)),
\end{equation}
where $D^b(X)$ denotes the bounded derived category of coherent sheaves on $X$. 

Indeed, homological mirror symmetry for toric manifolds was studied in detail by many authors, including \cite{Abouzaid_toricHMS, Zaslow_toricHMS} (see e.g. \cite{CollinsYau_dHYM_momentmap}, Section 9 for several additional references). Here we follow the version of \eqref{HMSIntro} due to Fang-Liu-Treumann-Zaslow \cite{Zaslow_toricHMS} and its non-equivariant case (there are various proofs, see e.g. \cite{KuwagakiCohCon, SibillaCohCon, ZhouCohCon}). 

As we will recall, following Fang \cite{Fang_charges}, there are also natural integration cycles $[\tilde{\cL}]$ associated with objects $\tilde{\cL}$ of $\FS(\cY_q, W( \omega_0))$, lying in the rapid decay homology group $H_n(\cY_q, \{\Rea(W( \omega_0)) \gg 0\}; \Z)$, such that, when $\cL = \cL(L, h)$ is a SYZ Lagrangian section and the dual $L^{\vee}$ is ample, then $[\cL]$ is represented by $\cL$ itself. 

Our main result shows that a SYZ Lagrangian section satisfies a form of the Thomas-Yau conjecture. It involves a large scale factor $k > 0$ for both the K\"ahler form and the Lagrangian. We write $q_k \in \cM$ for the point corresponding to $k[\omega_0]$, with fibre $\cY_{q_k} \cong (\C^*)^n$. 
\begin{thm}[Corollary \ref{FSCor}]\label{MainThm} Let $X$ be a toric weak Fano manifold with a fixed torus-invariant K\"ahler form $\omega_0$ and a holomorphic line bundle $L$, with ample inverse $L^{\vee}$. Fix $k > 0$ sufficiently large. Let $\cL = \cL(L^{\otimes k}, h) \subset \cU_{q_k}$ be a SYZ Lagrangian section in the mirror to $(X, k\omega_0)$. Suppose $[\omega_0]$ is \emph{generic} in the sense that it does not lie in the union of finitely many proper analytic subvarieties of $H^{1, 1}(X, \R)$ determined by $c_1(L)$.

Then, as $V$ ranges through toric subvarieties of $X$, there exist objects
\begin{equation*}
\cL_V \in \FS(\cY_{q_k}, W(k \omega_0)),
\end{equation*}
with morphisms
\begin{equation*}
\cL_V[-\codim V] \to \cL,
\end{equation*}
such that $\cL$ is Hamiltonian isotopic to a \emph{supercritical special} SYZ Lagrangian section if, and only if, the phase inequalities for periods
\begin{equation}\label{MainThmPhaseIneq}
\arg\left((-1)^{\codim V}\int_{[\cL_V]} e^{-W(k\omega_0)}\Omega_0 \right) < \arg \int_{ \cL } e^{-W(k\omega_0)}\Omega_0 
\end{equation}
hold.

When $n = 2$, i.e. if $X$ is a toric weak del Pezzo surface, the supercritical assumption is not necessary.   
\end{thm}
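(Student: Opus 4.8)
The plan is to transport the statement across the chain of mirror correspondences recalled in the introduction, so that it becomes the Nakai-Moishezon criterion for supercritical $\dhym$ connections on $X$, which I treat as the main analytic input. First I would use the Leung-Yau-Zaslow / $\syz$ transform to reinterpret the A-side data. Since deforming the Hermitian metric $h$ preserves the Hamiltonian isotopy class of $\cL = \cL(L^{\otimes k}, h)$, that class is determined by the holomorphic line bundle $L^{\otimes k}$ alone, and a \emph{special} representative in the class corresponds precisely to a choice of $h$ for which the Chern connection of $(L^{\otimes k}, h)$ solves the $\dhym$ equation with respect to $k\omega_0$; the supercritical-phase hypothesis on the Lagrangian matches the supercritical branch of that equation. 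Thus the left-hand side of the "if and only if" becomes: $L^{\otimes k}$ admits a supercritical $\dhym$ metric with respect to $k\omega_0$.

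Next I would identify the objects and compute the periods. Under the toric homological mirror symmetry equivalence \eqref{HMSIntro} I would take $\cL_V$ to be the mirror of the structure sheaf $\olo_V$, and the morphism $\cL_V[-\codim V]\to\cL$ to be the mirror of a chosen class in $\operatorname{Ext}^{\codim V}(\olo_V, L^{\otimes k})=\Hom_{D^b(X)}(\olo_V[-\codim V], L^{\otimes k})$, so that the $\cL_V$ do fit into the exact triangles underlying the slope picture. Using Fang's integration cycles $[\cL_V]$ together with the toric $\widehat{\Gamma}$-theorem, the period $\int_{[\cL_V]} e^{-W(k\omega_0)}\Omega_0$ is identified, up to a universal normalisation, with the Gamma-structure central charge of $\olo_V$, of the schematic form $\int_X e^{-\ii k\omega_0}\,\widehat{\Gamma}_X\,\ch(\olo_V)$. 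The shift by $[-\codim V]$ and the sign $(-1)^{\codim V}$ in \eqref{MainThmPhaseIneq} are then exactly the degree and shift factors produced by $\ch(\olo_V)$ together with $\widehat{\Gamma}_X$. Writing $\alpha$ for a representative of (a multiple of) $c_1(L^{\otimes k})$, I would show that $\arg\big((-1)^{\codim V}\int_{[\cL_V]}e^{-W(k\omega_0)}\Omega_0\big)$ is, in a suitable convention, the argument of $\int_V (k\omega_0-\ii\alpha)^{\dim V}$ to leading order, with $\widehat{\Gamma}$-corrections that are of lower order in $k$, so that \eqref{MainThmPhaseIneq} becomes precisely the Nakai-Moishezon slope inequalities for the supercritical $\dhym$ equation indexed by $V$.

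I would then invoke the supercritical $\dhym$ Nakai-Moishezon criterion to close the equivalence in both directions simultaneously: the supercritical $\dhym$ metric on $L^{\otimes k}$ exists if and only if the Nakai-Moishezon inequalities hold over all irreducible subvarieties. The reduction from all subvarieties to the toric subvarieties $V$ appearing in the statement I would carry out by an equivariant averaging argument, using torus-invariance of $\omega_0$ and $h$: a destabilising subvariety produces a destabilising torus-invariant cycle, so it suffices to test toric strata. The genericity of $[\omega_0]$ and the largeness of $k$ are used to promote the inequalities to strict ones and to ensure the subleading $\widehat{\Gamma}$-corrections cannot reverse any inequality, i.e.\ to stay off the walls cut out by the finitely many analytic subvarieties in the hypothesis. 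When $n=2$ the surface $\dhym$ theory furnishes a complete Nakai-Moishezon criterion with no constraint on the lifted Lagrangian angle, which is precisely why the supercritical assumption can be dropped for toric weak del Pezzo surfaces.

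The hard part will be the second step: matching the phases exactly. Identifying the oscillatory periods with the $\dhym$ central charges through the toric $\widehat{\Gamma}$-theorem, tracking the shifts and signs, and above all controlling the subleading $\widehat{\Gamma}$-contributions so that for $k\gg 0$ and generic $[\omega_0]$ they reproduce the classical $\dhym$ inequalities \emph{strictly}, is where the genuine care is required; the equivariant reduction to toric subvarieties is the other delicate point. Everything else is an application of the cited $\syz$, $\widehat{\Gamma}$-theorem, and homological mirror symmetry correspondences together with the known supercritical $\dhym$ existence theorem.
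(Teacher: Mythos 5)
Your overall strategy coincides with the paper's: use Leung--Yau--Zaslow/SYZ to turn ``Hamiltonian isotopic to a special representative'' into solvability of a (supercritical) dHYM equation, use the toric $\widehat{\Gamma}$-theorem together with Fang's integration cycles and Fang--Liu--Treumann--Zaslow homological mirror symmetry to identify the periods with Chern-character integrals in the large $k$ limit, invoke the Chu--Li--Takahashi Nakai--Moishezon criterion (Jacob--Yau when $n=2$, which is why the supercritical hypothesis drops for surfaces), and use genericity plus $k\gg 0$ to keep the inequalities strict. The reduction of the Nakai--Moishezon test to toric subvarieties is also treated essentially as you suggest.

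The gap is in your second step, namely the choice $\cL_V = $ mirror of $\olo_V$ with morphisms taken from $\operatorname{Ext}^{\codim V}(\olo_V, L^{\otimes k})$. First, as literally written, the central charge $\int_X e^{-\ii k\omega_0}\,\widehat{\Gamma}_X\,\ch(\olo_V)$ does not involve $L$ at all, so its argument cannot reproduce the Nakai--Moishezon quantity built from $\int_V(\ii\omega_0 + c_1(L^{\vee}))^{\dim V}$; you would need at least the twisted object $L^{\otimes k}\otimes\olo_V$. Second, and more seriously, the morphisms you propose do not exist nonzero for $k \gg 0$: since $R\mathcal{H}om(\olo_V,\olo_X)\cong\det N_{V/X}[-\codim V]$, one has $\operatorname{Ext}^{\codim V}(\olo_V,L^{\otimes k}) = H^0\bigl(V,\det N_{V/X}\otimes L^{\otimes k}|_V\bigr)$, which vanishes for large $k$ whenever $\dim V>0$, because $L^{\otimes k}|_V$ is arbitrarily negative ($L^{\vee}$ is ample). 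Twisting does not cure this: $\operatorname{Ext}^{\codim V}(L^{\otimes k}\otimes\olo_V, L^{\otimes k}) = H^0(V,\det N_{V/X})$, which also vanishes, e.g.\ for the exceptional divisor $E\subset\Bl_p\PP^2$, where $\det N_{E/X} = \olo_{\PP^1}(-1)$. So the exact triangles underlying your slope picture are simply not there, and the statement would be vacuous with zero morphisms. This is exactly why the paper avoids structure sheaves: for a toric divisor $V$ it takes $\cS_V = L^{\otimes k}(k_V V)\otimes\olo_{k_V V}$ with a second scale $k\gg k_V\gg 1$, whose morphism $\cS_V[-1]\to L^{\otimes k}$ is the boundary map of the non-split extension $0\to L^{\otimes k}\to L^{\otimes k}(k_V V)\to\cS_V\to 0$ (hence canonically nonzero), and for higher-codimension toric strata it builds $\cS_V$ as iterated cones of such objects; the multi-parameter scaling asymptotics then show that $\int_{\Gamma(\cS_V)}e^{-W(k\omega_0)}\Omega_0$ reproduces $\int_V e^{-\ii\omega_0}\ch(L)$ to leading order, with genericity (absence of strict semistabilizers) guaranteeing that the strict inequality survives the limit. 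A smaller inaccuracy: by Leung--Yau--Zaslow the special Lagrangian condition for $\cL(L^{\otimes k},h)$ is the dHYM equation on the \emph{dual} bundle $L^{-k}$ (equivalently the conjugate equation on $L^{\otimes k}$), and it is for $c_1(L^{\vee})$ that the supercritical branch $\varphi\in(0,\pi)$ is the natural one; your phrasing puts the equation on $L^{\otimes k}$ itself, which mismatches the supercritical hypothesis in the statement.
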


\begin{rmk} The role of the scale factor $k > 0$ is discussed in Section \ref{BackSec}, see in particular Remarks \ref{FanoThmRmk}, \ref{CollinsYauRmk}. The holomorphic volume form $e^{-W(k\omega_0)}\Omega_0$ appearing in Theorem \ref{MainThm} is natural in our context, see Remark \ref{VolumeFormRmk}.
\end{rmk}

\begin{rmk} The genericity condition on $[\omega_0]$ is explicit, see Remark \ref{FanoThmRmk} $(v)$.
\end{rmk}
\begin{rmk}\label{UnamplenessRmk} We could work with the opposite convention that $L^{\vee}$ is negative. Then supercritical Lagrangians are replaced by ``conjugate supercritical" ones.

It is possible to remove the assumption that $L^{\vee}$ is ample, at the cost of replacing the Lagrangian $\cL$ as an integration cycle in the phase inequalities for periods \eqref{MainThmPhaseIneq} with the natural associated cycle $[\cL] \in H_n(\cY_{q_k}, \{\Rea(W(k\omega_0)) \gg 0\}; \Z)$. However, in general, if $L^{\vee}$ is not ample, $\cL$ might not define an element in $H_n(\cY_{q_k}, \{\Rea(W(k\omega_0)) \gg 0\}; \Z)$, or, if it does, this might not be cohomologous to the natural $[\cL]$. Thus, it is not clear how \eqref{MainThmPhaseIneq} in this case would make contact with Li's phase inequalities for periods, and so with the Thomas-Yau conjectures.
\end{rmk}

Theorem \ref{MainThm} is stated in a more detailed way as Corollary \ref{ThomasYauCor} in Section \ref{BackSec}. The main tools used in the proof are the SYZ transform, Iritani's toric $\widehat{\Gamma}$-theorem, and toric homological mirror symmetry in the sense of Fang-Liu-Treumann-Zaslow. In this way, Theorem \ref{MainThm} is reduced to a result concerning deformed Hermitian Yang-Mills (dHYM) connections, known as a Nakai-Moishezon criterion. This reduction, together with the necessary background, is explained in detail in Section \ref{BackSec}, see in particular Theorem \ref{FanoThm} and Corollary \ref{FSCor}. Sections \ref{PositivitySec} and \ref{MirrorSec} contain some more technical details about dHYM connections and toric mirror symmetry.

\subsection{Relation to Bridgeland stability on surfaces}
In the case of compact Calabi-Yau manifolds, Joyce \cite{Joyce_ThomasYau} proposed precise conjectures relating the existence of special Lagrangians to stability conditions on the Fukaya category in the sense of Bridgeland. 

We study a toric version of such a relation when $X$ is a toric weak del Pezzo surface. 

Let $E \in \Coh(X)$. Set 
\begin{equation*}
Z_{[\omega_0]}(E) = -\int_X e^{-\ii [\omega_0]}\ch(E).
\end{equation*}
Recall that there exists a natural Bridgeland stability condition $\sigma_{[\omega_0]}$ on $D^b(X)$, with heart given by a suitable tilting $\Coh^{\sharp}(X)$ of $\Coh(X)$ at a torsion pair, and with central charge $Z_{[\omega_0]}(E)$. We will provide some more details in Section \ref{BSec}. See \cite{Collins_stability}, Section 4 for a detailed exposition. 

Firstly, we give an ad-hoc definition of Bridgeland stability for our Lagrangian sections, which nevertheless seems quite natural, as we explain.
\begin{definition}\label{BstabDef} Let $X$ be a toric weak del Pezzo surface. Suppose the fixed line bundle $L^{\vee}$ is positive. Fix $k > 0$ sufficiently large. Let $\cL = \cL(L^{\otimes k}, h) \subset \cU_{q_k}$ be a SYZ Lagrangian section in the mirror to $(X, k\omega_0)$. We say that the Lagrangian 
\begin{equation*}
\tilde{\cL} := \cL[1]
\end{equation*}
is Bridgeland stable iff its mirror $L^{\otimes k}[1]$ is Bridgeland stable with respect to the stability condition $\sigma_{k[\omega_0]} = (\Coh^{\sharp}(X), Z_{k[\omega_0]})$. 
\end{definition}
\begin{rmk} Definition \ref{BstabDef} is consistent. Taking the shift $\tilde{\cL} = \cL[1]$ in the category $\FS(\cY_{q_k}, W(k \omega_0))$ is represented by the same Lagrangian section $\cL$, endowed with the new grading obtained by shifting the lifted Lagrangian phase of $\cL$ by $\pi$, see \cite{YangLi_ThomasYau}, footnote 1 and Remark 2.6. On the other hand, by the cohomological characterisation of the heart $\Coh^{\sharp}(X)$ recalled in Section \ref{BSec}, the shifted line bundle $L^{\otimes k}[1]$ defines an object of $\Coh^{\sharp}(X)$.\end{rmk}
\begin{rmk} Definition \ref{BstabDef} is motivated by the fact that the equivalence \eqref{HMSIntro} is compatible with the SYZ transform for line bundles. 

Note that one could give the same definition on a general toric surface. The key point is that, when $X$ is weak del Pezzo, we will see, relying essentially on the $\widehat{\Gamma}$-theorem, that this definition is natural at the level of central charges for $k \gg 0$ (see Remark \ref{AsympZRmk}). 

That is, we will see that the central charge $Z_{k[\omega_0]}(L^{\otimes k}[1])$ equals the Landau-Ginzburg central charge, given by periods of the holomorphic volume form, up to a small error term,
\begin{equation*}
\int_X e^{- \ii k [\omega_0]} \ch(L^{\otimes k}) = \frac{1}{(2\pi \ii)^{n}} \int_{[\cL]} e^{-W(k \omega_0)/z} \Omega_0\,(1 + O(k^{-1})).
\end{equation*} 
At least when $L^{\vee}$ is ample, the integration cycle $[\cL] \in H_n(\cY_{q_k}, \{\Rea(W(k\omega_0)) \gg 0\}; \Z)$ can be represented by $\cL$ itself (see Remark \ref{UnamplenessRmk}), and this central charge is compatible with Li's phase inequalities for periods \eqref{LiPhaseInequ} and with Joyce's Conjecture 3.2 in \cite{Joyce_ThomasYau}.  
\end{rmk}
\begin{thm}\label{BstabThm} Suppose we are in the situation of Theorem \ref{MainThm} and Definition \ref{BstabDef}, with $X$ is a toric weak del Pezzo surface. Then, if $\tilde{\cL}$ is Bridgeland stable, it is Hamiltionian isotopic to a (shifted) special SYZ Lagrangian section. The converse also holds, conditionally on a conjecture of Arcara-Miles (see Conjecture \ref{AMConj}), which is known when $X = \Bl_p \PP^2$ or $X = \Bl_{p, q} \PP^2$.  
\end{thm}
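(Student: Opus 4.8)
The plan is to deduce both implications from the characterisation of special SYZ sections by the phase inequalities \eqref{MainThmPhaseIneq} in Theorem \ref{MainThm}, translating the Bridgeland slope inequalities of $\sigma_{k[\omega_0]}$ into statements about the periods $\int_{[\cL_V]} e^{-W(k\omega_0)}\Omega_0$. The bridge is the asymptotic identification of central charges with periods from Remark \ref{AsympZRmk}: via the toric $\Gcl$-theorem, $Z_{k[\omega_0]}(\,\cdot\,)$ agrees with the Landau-Ginzburg period up to a fixed normalisation and an $O(k^{-1})$ error, so that the Bridgeland phase of an object equals the argument of the corresponding period. Under homological mirror symmetry I would identify the objects $\cL_V$ attached to the toric subvarieties $V \subset X$ (the surface itself, its toric divisors, and the toric fixed points) with shifts and twists of the structure sheaves $\olo_V$; the shift $\tilde{\cL} = \cL[1]$ is exactly what places $L^{\otimes k}[1]$ in the tilted heart $\Coh^{\sharp}(X)$, and the morphisms $\cL_V[-\codim V] \to \cL$ of Theorem \ref{MainThm} realise the $\cL_V$ as the relevant sub/quotient objects there. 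With this dictionary, \eqref{MainThmPhaseIneq} becomes precisely the slope inequalities of $\sigma_{k[\omega_0]}$-stability tested against the $\cL_V$.

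For the \emph{forward} direction (Bridgeland stable $\Rightarrow$ special) I would argue unconditionally. If $L^{\otimes k}[1]$ is $\sigma_{k[\omega_0]}$-stable, then its slope strictly dominates that of every proper subobject and is strictly dominated by that of every proper quotient in $\Coh^{\sharp}(X)$. Specialising to the finitely many toric test objects $\cL_V$ and using the central charge--period dictionary, together with the genericity of $[\omega_0]$ to keep the inequalities strict, yields exactly \eqref{MainThmPhaseIneq}. Theorem \ref{MainThm} in the surface case $n = 2$, where no supercritical hypothesis is needed, then gives that $\tilde{\cL}$ is Hamiltonian isotopic to a shifted special SYZ section.

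The \emph{converse} (special $\Rightarrow$ Bridgeland stable) is where the real difficulty lies and where the hypothesis enters. A special $\tilde{\cL}$ produces, via Theorem \ref{MainThm}, the inequalities \eqref{MainThmPhaseIneq} only for the toric subvarieties $V$, whereas $\sigma_{k[\omega_0]}$-stability demands the slope inequality against \emph{every} subobject of $L^{\otimes k}[1]$ in the tilted heart---and such subobjects are two-term complexes whose numerical invariants are not obviously controlled by toric data. The role of the Arcara-Miles conjecture (Conjecture \ref{AMConj}) is exactly to close this gap: it guarantees that the destabilising walls for $L^{\otimes k}[1]$, equivalently the minimal-slope destabilising subobjects, are realised by the toric objects $\cL_V$, so that verifying \eqref{MainThmPhaseIneq} for toric $V$ already forces stability. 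For $X = \Bl_p \PP^2$ and $X = \Bl_{p,q} \PP^2$ the conjecture is a theorem, giving the unconditional converse in those cases.

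I expect the main obstacle to be precisely this last step: controlling all potential Bridgeland destabilisers of a line bundle in $\Coh^{\sharp}(X)$ by the finite list of toric subvarieties. In the large-volume regime $k \gg 0$ one can hope to bound the numerical invariants of any destabilising subobject and push it onto a wall cut out by the $\cL_V$, but making this rigorous for an arbitrary weak del Pezzo $X$ is exactly the content of Arcara-Miles, which is why the general converse stays conditional. Care is also needed in matching the $(-1)^{\codim V}$ factors and signs between the period inequalities \eqref{MainThmPhaseIneq} and the slope inequalities, and in confirming that the $O(k^{-1})$ error of Remark \ref{AsympZRmk} cannot reverse any strict inequality; this is where the genericity of $[\omega_0]$ and the largeness of $k$ are used.
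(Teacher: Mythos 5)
Your forward direction (Bridgeland stable $\Rightarrow$ special) is essentially the paper's own argument, carried out in Proposition \ref{BstabProp}: stability of $L^{\otimes k}[1]$ gives the central-charge inequalities \eqref{BStabZPhaseIneq} against the objects $\cS(kL,k_VV)=L^{\otimes k}(k_VV)\otimes\olo_{k_VV}$, which are genuine subobjects of $L^{\otimes k}[1]$ in $\Coh^{\sharp}(X)$ (this uses ampleness of $L^{\vee}$ and the fact that $\cS(kL,k_VV)$ is a one-dimensional torsion sheaf --- a point you assert rather than check, but the idea is the same); genericity and the scaling analysis of Theorem \ref{FanoThm} (Remark \ref{CentralChargeRmk}) then convert these into dHYM positivity \eqref{dHYMPosIntro}, hence solvability on a surface, hence the special Lagrangian property.

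Your converse, however, has a genuine gap: you mis-state what Conjecture \ref{AMConj} provides, and the implication you attribute to it fails. Arcara--Miles says that an unstable $L^{\otimes k}[1]$ is destabilised by $L^{\otimes k}(C)|_C$ for a curve $C$ with $C^2<0$, i.e.\ by a \emph{multiplicity-one} restriction --- not by the toric objects $\cL_V$ of Theorem \ref{MainThm}, whose mirrors are the thickened sheaves $\cS_V=L^{\otimes k}(k_VV)\otimes\olo_{k_VV}$ with $k_V\gg 1$. These two families of inequalities are not interchangeable: writing $T_m=L^{\otimes k}(mC)\otimes\olo_{mC}$ and $\omega=k\omega_0$, one computes
\begin{equation*}
\cot \arg Z_{[\omega]}(T_m) \;=\; \frac{-\,\ch_1(L^{\otimes k})\cdot C-\tfrac{m}{2}\,C^2}{C\cdot\omega},
\end{equation*}
which for $C^2<0$ is increasing in $m$, so $\arg Z_{[\omega]}(T_m)$ \emph{decreases} as the thickening grows. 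Hence the phase inequality \eqref{MainThmPhaseIneq}, which involves $m=k_V\gg 1$, is strictly weaker than the non-destabilising condition at $m=1$ required by Arcara--Miles, and ``verifying \eqref{MainThmPhaseIneq} for toric $V$'' does not force stability even granting the conjecture.

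The paper closes this gap by a different route, which is the actual crux of the conditional converse. From solvability of the dHYM equation it extracts (following Collins--Shi) the positivity inequality for \emph{all} curves $C$, an inequality with \emph{no} $C^2$ term; it then observes that the condition that the Arcara--Miles objects ($L^{\otimes k}(-C)$ for $L^{\otimes k}$, equivalently $L^{\otimes k}(C)|_C$ for the shift) do not destabilise differs from that positivity inequality exactly by the term $-\tfrac12 C^2$, which is \emph{positive} because $C^2<0$; so dHYM positivity implies it. This sign argument, together with the passage from the thickened period inequalities back to genuine dHYM positivity via genericity, is what makes the converse work, and it is absent from your proposal.
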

The proof is given in Section \ref{BstabProof}.
\begin{rmk} The proof of Theorem \ref{BstabThm} is based on showing that, under the assumptions of Theorem \ref{MainThm}, when $X$ is a toric surface (not necessarily weak del Pezzo), Bridgeland stability of $L^{\otimes k}[1]$ with respect to $\sigma_{k[\omega_0]}$ for $k \gg 1$ implies the existence of a dHYM connection on $L^{\otimes k}$, i.e. a solution of \eqref{dHYMIntro}, see Proposition \ref{BstabProp}. Conversely, using an argument of Collins and Shi \cite{Collins_stability}, we will see that the dHYM positivity of $L^{\otimes k}$ with respect to $k[\omega_0]$ implies the Bridgeland stability of $L^{\otimes k}[1]$ with respect to $\sigma_{k[\omega_0]}$ for all $k \geq 1$, if the conjecture of Arcara-Miles holds. The conjecture was proved for $X = \Bl_p \PP^2$ in \cite{ArcaraMiles} and recently for $X = \Bl_{p, q} \PP^2$ in \cite{MizunoYoshida}. 

We note that Theorem \ref{BstabThm} is compatible with the wall-and-chamber decompositions for dHYM connections studied by Khalid and Sj\"ostr\"om Dyrefelt \cite{SohaibZak_higherdim}. 
\end{rmk}
\begin{rmk}\label{CollinsShiExampleRmk} Collins and Shi \cite{Collins_stability} show an example of a K\"ahler class $[\omega_0]$ and a line bundle $L$ on $X = \Bl_p \PP^2$ which is stable with respect to stability condition $(\Coh^{\sharp}(X), Z_{[\omega_0]})$, but for which the dHYM equation \eqref{dHYMIntro} is not solvable, implying that $\tilde{\cL}$ is not Hamiltonian isotopic to a (shifted) special SYZ Lagrangian section. Their $L^{\vee}$ is negative, and so does not satisfy the assumptions of Definition \ref{BstabDef} and Theorem \ref{BstabThm}, although probably the example can be modified.

At any rate we will see that, also in this case, $L^{\otimes k}[1]$ is \emph{unstable} with respect to $(\Coh^{\sharp}(X), Z_{k[\omega_0]})$ for $k \gg 1$, see Example \ref{CollinsShiExm}.
\end{rmk}
\subsubsection{The unstable case}
The works of Joyce \cite{Joyce_ThomasYau} and Li \cite{YangLi_ThomasYau} also contain precise expectations concerning the case of objects $\cL$ in the Fukaya category which are unstable (i.e. not semistable) with respect to the (conjectural) Bridgeland stability condition relevant for the existence of special Lagrangians. These relate to versions of the Harder-Narashiman filtration for $\cL$.   

In our very special situation, when $X$ is a toric weak del Pezzo surface, we can make some progress on the unstable case by relying on the work of Datar-Mete-Song \cite{DatarSong_slopes} concerning weak solutions of the dHYM equation of surfaces. These are briefly recalled in Section \ref{UnstableSec}.

Let $X$ be a toric weak del Pezzo surface. Suppose the fixed ample line bundle $L^{\vee}$ on $X$ does \emph{not} support a smooth solution of the dHYM equation \eqref{dHYMIntro}, so that $\cL = \cL(L, h) \subset \cU_q$ cannot be Hamiltinian isotopic to a smooth special SYZ Lagrangian section. In this case, following the results of \cite{DatarSong_slopes}, there exists a \emph{minimal slope $\R$-divisor $D$} in $X$, which attains the \emph{minimal angle} $\theta_{min} \in (0,\pi)$ determined by
\begin{equation*}
\cot \theta_{min} := \sup_D \left\{\frac{(c_1(L^{\vee}) - D)^2 - [\omega_0]^2}{2 (c_1(L^{\vee}) -D)\cdot [\omega_0]},\,(c_1(L^{\vee}) -D)\cdot [\omega_0]>0\right\}, 
\end{equation*}
where the supremum is taken over effective $\R$-divisors. 

Note that, if a line bundle $\hat{L}$ satisfies $c_1(\hat{L}^{\vee}) \cdot [\omega_0] > 0$ and supports a smooth solution of the dHYM equation \eqref{dHYMIntro}, then the phase angle $\varphi(X, \hat{L}, [\omega_0]) \in (0, \pi)$ of the mirror special SYZ Lagrangian section is given by
\begin{equation*}
\cot \varphi(X, \hat{L}, [\omega_0]) =  \frac{(c_1( \hat{L}^{\vee}))^2 - [\omega_0]^2}{2 c_1(\hat{L}^{\vee}) \cdot [\omega_0]}. 
\end{equation*}
\begin{thm}\label{UnstableThm} Suppose we are in the situation of Theorem \ref{MainThm} and Definition \ref{BstabDef}, with $X$ is a toric weak del Pezzo surface. Let the Lagrangian $\tilde{\cL}$ be Bridgeland unstable (i.e. not semistable) with respect to $\sigma_{k[\omega_0]}$. Suppose that the corresponding minimal slope divisor $D$ is in fact a $\Q$-divisor. Then, assuming the Arcara-Miles conjecture 
\ref{AMConj} (known for $X = \Bl_p \PP^2$ or $X = \Bl_{p, q} \PP^2$), possibly after increasing $k > 0$, there is a (shifted) SYZ Lagrangian section $\tilde{\cL}_D$, with a morphism 
\begin{equation*}
\tilde{\cL} \to \tilde{\cL}_D
\end{equation*}     
in $\FS(\cY_{q_k}, W(k \omega_0))$, such that $\tilde{\cL}_D$ is Hamiltonian isotopic to a (shifted) SYZ Lagrangian section in the sense of currents, which is a weak solution of the special Lagrangian equation with phase angle $\theta_{min}$.  
\end{thm}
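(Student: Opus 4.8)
The plan is to transport the entire problem to the mirror B-side via the SYZ transform and toric homological mirror symmetry, resolve it there using the Datar-Mete-Song theory of weak dHYM solutions on surfaces, and then carry both the solution and the morphism back to $\FS(\cY_{q_k}, W(k\omega_0))$. First I would translate the Bridgeland instability hypothesis into the nonsolvability of the dHYM equation on the B-side. By Theorem \ref{BstabThm}, together with the Arcara-Miles conjecture \ref{AMConj}, for $k \gg 1$ the Bridgeland stability of $L^{\otimes k}[1]$ with respect to $\sigma_{k[\omega_0]}$ is equivalent to the existence of a smooth dHYM connection on $L^{\otimes k}$ with respect to $k[\omega_0]$. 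Hence, as $\tilde{\cL} = \cL[1]$ is assumed unstable, $L^{\otimes k}$ admits no smooth dHYM connection, and by the results recalled in Section \ref{UnstableSec} this nonsolvability produces the minimal slope $\R$-divisor $D$ attaining the minimal angle $\theta_{min} \in (0,\pi)$, together with a weak solution of the dHYM equation, in the sense of currents, of phase $\theta_{min}$.

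Next I would pin down the destabilising B-side object. Since the class $c_1(L^\vee) - D$ realises the minimal angle, I set $\hat{L} := L \otimes \olo(D)$, so that $c_1(\hat{L}^\vee) = c_1(L^\vee) - D$ and the phase formula for line bundles supporting a dHYM solution yields $\varphi(X, \hat{L}, [\omega_0]) = \theta_{min}$; indeed the two $\cot$ formulas coincide and are invariant under simultaneously rescaling $c_1(\hat{L}^\vee)$ and $[\omega_0]$ by $k$. The hypothesis that $D$ is a $\Q$-divisor enters precisely here: after increasing $k$ so that $kD$ is an integral effective divisor, $\hat{L}^{(k)} := L^{\otimes k} \otimes \olo(kD)$ is a genuine line bundle, and the canonical section cutting out $kD$ gives a morphism $L^{\otimes k} \to \hat{L}^{(k)}$ in $D^b(X)$, compatible with the Harder-Narasimhan step selected by $D$. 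Shifting by $[1]$ and applying the equivalence \eqref{HMSIntro}, this is mirror to a morphism $\tilde{\cL} \to \tilde{\cL}_D$ in $\FS(\cY_{q_k}, W(k\omega_0))$, where $\tilde{\cL}_D$ is the shifted SYZ Lagrangian section associated to $\hat{L}^{(k)}$.

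Finally I would carry the weak solution itself back to the A-side. The limiting class $c_1((\hat{L}^{(k)})^\vee) = k(c_1(L^\vee) - D)$ is exactly the one on whose regular locus the Datar-Mete-Song weak solution realises phase $\theta_{min}$. Applying the SYZ transform to $\hat{L}^{(k)}$ with a compatible metric, and invoking the phase dictionary established earlier, under which the dHYM phase angle of a line bundle on $X$ equals the special Lagrangian phase angle of its mirror SYZ section, the shifted section $\tilde{\cL}_D$ is Hamiltonian isotopic, in the sense of currents, to a weak solution of the special Lagrangian equation with phase angle $\theta_{min}$, as required.

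The hard part will be the passage to the current/weak regime. The SYZ transform and the phase-angle dictionary are set up for \emph{smooth} dHYM connections and \emph{smooth} special Lagrangian sections; here one must show that they remain compatible with the weak, current-level solutions of Datar-Mete-Song. Concretely, the delicate points are: (i) verifying that the SYZ transform of the weak dHYM current on $\hat{L}^{(k)}$ is Hamiltonian isotopic, as a current, to a weak special Lagrangian section of the \emph{same} phase $\theta_{min}$, which requires controlling the regularity of the transform across the support of $D$ and the preservation of the phase in the limit; and (ii) realising the algebraic morphism $L^{\otimes k} \to \hat{L}^{(k)}$ geometrically on the A-side, as the map mirror to multiplication by the section cutting out $kD$. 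These analytic issues, rather than the cohomological bookkeeping, are where I expect the argument to concentrate.
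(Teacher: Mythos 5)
Your route is the paper's in outline: push the instability to the B-side through Arcara--Miles, invoke the Datar--Mete--Song theory, use the $\Q$-divisor hypothesis to form $L^{\otimes k}(kD)$ and the morphism $L^{\otimes k}[1] \to L^{\otimes k}(kD)[1]$, and transport back through mirror symmetry. But two steps carrying the real content are missing or do not work as stated. First: what you extract from instability is too weak. Quoting Theorem \ref{BstabThm} gives only that the smooth dHYM equation is unsolvable on $L^{\otimes k}$, and you then assert that this nonsolvability ``produces'' the minimal slope divisor $D$, the angle $\theta_{min}$ and a weak solution. In fact Theorem \ref{MinSlopesThm} produces $D$, $\theta_{min}$ and the weak current in the \emph{original} class unconditionally, with no instability hypothesis; what instability must be used for is the strict gap $\cot\theta_{min} > \cot\varphi(X, -c_1(L), [\omega_0])$, equivalently the failure of the dHYM semipositivity condition \eqref{dHYMSemiPos}, equivalently the nontriviality of $D$. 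Nonsolvability alone does not give this: in a borderline case the Nakai--Moishezon inequality \eqref{dHYMPosIntro} degenerates to an equality on some curve (so there is no smooth solution) while \eqref{dHYMSemiPos} still holds and $D = 0$. This is exactly why the paper does not quote Theorem \ref{BstabThm} but re-runs the Arcara--Miles argument inside the proof: instability of $L^k$ gives a curve $C$ with $C^2 < 0$ and a \emph{strict} destabilising inequality, which after rescaling strictly violates \eqref{dHYMSemiPos}. (Alternatively the genericity built into the situation of Theorem \ref{MainThm} excludes the borderline case, but you never invoke it.)

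Second: the object you transport to the A-side must be a closed current in the class $c_1\big((L^{\otimes k}(kD))^{\vee}\big) = k(-c_1(L) - D)$ solving the weak dHYM equation with angle $\theta_{min}$, and you never construct it. Saying that this class is ``the one on whose regular locus the Datar--Mete--Song weak solution realises phase $\theta_{min}$'' is not a construction --- restricting the current in the original class to its regular locus does not yield a closed current in the shifted class --- and ``a compatible metric'' on $L^{\otimes k}(kD)$ presupposes precisely the object to be built. The paper's fix is short and uses the observation you do make, namely $\varphi(X, L(D), [\omega_0]) = \theta_{min}$ because $D$ attains the supremum: this says the minimal angle of the class $-c_1(L) - D$ equals its \emph{topological} angle, so applying Theorem \ref{MinSlopesThm} a second time, now to this class, yields the unique closed current $\cT_1 \in c_1(-L-D)$ solving \eqref{WeakdHYM} with the correct topological angle; rescaling by $k$ makes $\cT_1$ the curvature of a singular Hermitian metric on $(L^{\otimes k}(kD))^{\vee}$. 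Finally, the analytic difficulty you flag at the end is not where the paper's proof lives: the Hamiltonian isotopy ``in the sense of currents'' is read off directly from the scale-invariant equality of measures \eqref{WeakdHYM}, i.e.\ the weak special Lagrangian statement is interpreted through the mirror weak dHYM equation, with no regularity analysis of the SYZ transform across the support of $D$.
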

The technical details of the statement and its proof are given in Section \ref{UnstableProofSec} (see also Example \ref{SupportExample}, following \cite{DatarSong_slopes}, Theorem 1.13).
 
\subsection{Some generalisations of Theorem \ref{MainThm}}
It in natural to ask for extensions of Theorem \ref{MainThm} beyond the case of Lagrangian sections, or for a larger class of toric manifolds (not necessarily weak Fano), or beyond the supercritical phase assumption. In general these seem very difficult problems.

Here we discuss obstructions similar to Theorem \ref{MainThm}, conjecturally related to Lagrangian multi-sections, in a very special case, see Corollary \ref{HighRankFSCor} and Proposition \ref{HighRankBInstProp}. 

We give a weak analogue of Theorem \ref{MainThm} for general projective toric manifolds (Theorem \ref{GeneralThm}). 

Additionally, Proposition \ref{LowerPhaseProp} shows an analogue of Theorem \ref{MainThm} in a special, higher dimensional example where the supercritical assumption can be removed.\\

\noindent\textbf{Acknowledgements.} I am grateful to Vamsi Pingali, Carlo Scarpa, Nicol\`o Sibilla, Peng Zhou and especially to Tristan Collins and Richard Thomas for important comments and suggestions.
\section{Background and main result}\label{BackSec}
\subsection{dHYM connections and Nakai-Moishezon criteria} 
Let $X$ be a compact, $n$-dimensional K\"ahler manifold. Following the conventions of \cite{Takahashi_dHYM}, Section 1, given a fixed K\"ahler form $\omega \in [\omega_0]$ and a class $[\alpha_0] \in H^{1,1}(X, \R)$, the \emph{deformed Hermitian Yang-Mills (dHYM) equation} seeks a smooth representative $\alpha \in [\alpha_0]$ such that 
\begin{equation}\label{dHYMIntro}
\Imm\left(e^{-\ii \phi} (\omega + \ii \alpha)^n\right) = 0,
\end{equation} 
where $\phi \in \R$ is a fixed constant. Note that the phase $e^{-\ii \phi}$ is uniquely determined cohomologically, by the condition
\begin{equation*}
\int_X (\omega_0 + \ii \alpha_0)^n \in e^{\ii \phi}\R_{>0}
\end{equation*}
(we always assume that the integral does not vanish).

It is useful to consider explicitly the variant 
\begin{equation}\label{NegativedHYMIntro}
\Imm\left(e^{\ii \phi} (\omega - \ii \alpha)^n\right) = 0,
\end{equation} 
with
\begin{equation*} 
\int_X (\omega_0 - \ii \alpha_0)^n \in e^{-\ii \phi}\R_{>0}.
\end{equation*}
Naturally, the two equations \eqref{dHYMIntro}, \eqref{NegativedHYMIntro} are equivalent under complex conjugation.

\begin{rmk}\label{LYZRmk} Leung, Yau and Zaslow \cite{LeungYauZaslow} showed that, for a local, semi-flat K\"ahler Calabi-Yau torus fibration $(M, \omega_M, \Omega_M)$, with mirror $(W, \omega_W, \Omega_W)$, the dHYM equation \eqref{NegativedHYMIntro} (i.e. with negative sign for $\alpha$) on a holomorphic line bundle $L \to M$, with $[\alpha_0] = c_1(L)$, is precisely equivalent to the special Lagrangian condition for the mirror Lagrangian section $\cL \subset W$, that is,
\begin{equation*}
\omega_W|_{\cL} = 0,\,\Imm\left(e^{-\ii \phi} {\Omega_W}|_{\cL}\right) = 0.
\end{equation*}
In other words, the special Lagrangian condition for $\cL$ is equivalent to the dHYM equation \eqref{dHYMIntro} (i.e. with positive sign for $\alpha$) for the \emph{dual} line bundle $L^{\vee}$, that is, for $[\alpha_0] = c_1(L^{\vee})$.

This correspondence can be generalised to toric compact K\"ahler manifolds, as recalled e.g. in \cite{Chan_survey} and in \cite{CollinsYau_dHYM_momentmap}, Section 9.
\end{rmk}

A systematic study of the dHYM equation on compact K\"ahler manifolds was initiated by Collins, Jacob and Yau (see \cite{CollinsJacobYau, CollinsYau_dHYM_momentmap, JacobYau_special_Lag}), leading in particular to a conjectural equivalence between solvability of the dHYM equation and a certain numerical positivity condition, at least under an assumption known as \emph{supercritical phase}, which we explain in Section \ref{SupercritSec} (see the condition \eqref{SupercritCondition}). This conjecture has now been settled, at least in the projective case, thanks to the work of several authors 
\cite{Takahashi_dHYM, DatarPingali_dHYM, Song_NakaiMoishezon}, after an initial breakthrough due to Chen \cite{GaoChen_Jeq_dHYM} (see also \cite{CollinsSzekelyhidi} for previous results in the toric case). Let us set
\begin{equation*}
\varphi := \frac{n}{2}\pi - \phi.
\end{equation*}
When assuming supercritical phase, we will always suppose that we have
\begin{equation*}
\varphi \in (0, \pi).
\end{equation*}
\begin{thm}[Chu-Li-Takahashi \cite{Takahashi_dHYM}, Corollary 1.5]\label{dHYMThm} Suppose that $X$ is projective. Then, there exists a \emph{supercritical} solution of the dHYM equation \eqref{dHYMIntro} for any, or some, representative $\omega \in [\omega_0]$ if, and only if, for all proper irreducible subvarieties $V \subset X$ we have
\begin{equation}\label{dHYMPosIntro}
\int_V \Rea(\ii \omega_0 + \alpha_0)^{\dim V} - \cot(\varphi)\Imm(\ii \omega_0 +  \alpha_0)^{\dim V} > 0.
\end{equation}
The solution is unique.
\end{thm}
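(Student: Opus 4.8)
The plan is to establish the two implications separately; the forward (``only if'') direction is essentially a positivity computation, while the converse is where all the analysis sits. First I would reformulate \eqref{dHYMIntro} as a scalar equation. Fixing $\omega \in [\omega_0]$ and writing $\alpha = \alpha_0 + \ii\del\delbar\psi$ for a real potential $\psi$, let $\lambda_1(\psi),\dots,\lambda_n(\psi)$ be the eigenvalues of $\alpha$ relative to $\omega$. Then \eqref{dHYMIntro} is equivalent to the \emph{Lagrangian phase equation}
\[
\Theta(\psi) := \sum_{j=1}^n \arctan \lambda_j(\psi) = \phi \pmod{\pi},
\]
and, with $\phi = \Theta$ and $\varphi = \tfrac{n}{2}\pi - \phi$, the supercritical hypothesis $\varphi \in (0,\pi)$ becomes $\Theta \in \big((n-2)\tfrac{\pi}{2}, n\tfrac{\pi}{2}\big)$. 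This is precisely the range on which $\Theta$, viewed as a function of $(\lambda_1,\dots,\lambda_n)$, is concave, so that \eqref{dHYMIntro} is a concave, (degenerate) elliptic equation of Monge--Amp\`ere type.

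For necessity, I would use that a supercritical solution restricts positively to subvarieties. One checks that the integrand in \eqref{dHYMPosIntro} equals $-\tfrac{1}{\sin\varphi}\,\Imm\!\big(e^{-\ii\varphi}(\ii\omega + \alpha)^{\dim V}\big)$, and that, along any proper subvariety $V$ where $\alpha$ solves the equation with phase in the supercritical range, this form is a pointwise nonnegative multiple of the induced volume form on $V$, strictly positive on a dense open subset. This is a calibration-type positivity: it holds because supercriticality keeps all partial angle sums over coordinate subspaces inside the admissible interval. Integrating over $V$ yields the strict inequality \eqref{dHYMPosIntro}.

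For sufficiency I would run a continuity method for the phase equation. One sets up a path of supercritical dHYM problems joining the given data to a model that is solvable by inspection (for instance by adding a large ample class to $\alpha_0$, forcing all $\lambda_j$, hence all angles, close to $\tfrac{\pi}{2}$), and lets $S \subset [0,1]$ be the set of parameters admitting a supercritical solution. Openness of $S$ is the implicit function theorem: the linearisation of $\Theta$ is a second-order elliptic operator with coefficients $(1+\lambda_j^2)^{-1} > 0$ in the diagonalising frame, which is invertible on functions modulo constants, and concavity guarantees the coefficient matrix is positive definite. Closedness requires $t$-independent a priori estimates. The key structural fact is that the concavity coming from supercriticality gives a second-order estimate bounding $|\ii\del\delbar\psi|_\omega$ by $\sup|\psi|$ and fixed geometric data via the maximum principle; Evans--Krylov and Schauder theory then bootstrap a $C^0$ and $C^2$ bound to estimates of all orders, and hence to compactness. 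So closedness reduces to a uniform $C^0$ bound on $\psi$.

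The hard part will be this $C^0$ estimate, and it is exactly here that the numerical positivity \eqref{dHYMPosIntro} must be exploited; this is the heart of the Nakai--Moishezon mechanism, following Chen \cite{GaoChen_Jeq_dHYM} and Song \cite{Song_NakaiMoishezon}. I would argue by contradiction: if $\sup|\psi_t|$ diverged along a sequence $t_i \to t_\infty$, I would normalise the potentials, pass to a limit, and analyse the degeneration using pluripotential theory. The failure of the bound forces the non-pluripolar mass of the limiting Monge--Amp\`ere-type current to concentrate along a proper analytic subvariety $V$; a Siu-type decomposition isolates $V$, and an intersection-theoretic computation then shows $\int_V \big[\Rea - \cot(\varphi)\,\Imm\big](\ii\omega_0 + \alpha_0)^{\dim V} \leq 0$, contradicting \eqref{dHYMPosIntro}. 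Matching the analytic concentration with the algebraic intersection number, uniformly along the path, is the most delicate step, and the place where projectivity of $X$ is used. This closes $S$ and produces the solution. Uniqueness is then immediate from concavity: if $\alpha, \alpha'$ are two supercritical solutions in the class, the maximum principle applied to the difference of their potentials in the elliptic, concave equation forces $\alpha = \alpha'$.
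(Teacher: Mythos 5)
First, a structural point: the paper does not prove this statement at all --- it is imported as a black box from Chu--Li--Takahashi \cite{Takahashi_dHYM} (their Corollary 1.5), so there is no internal proof to compare yours against. Judged against the literature you are implicitly reconstructing, your necessity direction is essentially right (supercriticality gives pointwise angle bounds for restrictions to subvarieties, the Collins--Jacob--Yau linear-algebra lemma, and integration gives \eqref{dHYMPosIntro}), and uniqueness via the maximum principle is fine. The sufficiency direction, however, contains a genuine gap.

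The gap is your closedness step. You assert that ``concavity coming from supercriticality gives a second-order estimate bounding $|\ii\del\delbar\psi|_\omega$ by $\sup|\psi|$ and fixed geometric data via the maximum principle.'' No such estimate is known, and this is exactly why the theorem was hard: for the Lagrangian phase operator in the supercritical range, all known $C^2$ and $C^0$ estimates require a \emph{subsolution} (a $\mathcal{C}$-subsolution in Sz\'ekelyhidi's sense, the ``$Z$-positivity'' of the present paper's Section \ref{HigherRankPosSec} being the higher-rank analogue), and the constants depend on it. (A related imprecision: $\Theta$ is genuinely concave only in the hypercritical range $\Theta>(n-1)\tfrac{\pi}{2}$; in the supercritical range one only has convexity of the level sets, which is what the estimates actually exploit.) Consequently the logic of the actual proofs is inverted relative to yours: the numerical criterion \eqref{dHYMPosIntro} is not used to rescue a $C^0$ bound by a contradiction/concentration argument along a continuity path; it is used to \emph{construct the subsolution} in the first place, by induction on dimension following Demailly--P\u{a}un's proof of the classical Nakai--Moishezon criterion --- this is Chen's breakthrough \cite{GaoChen_Jeq_dHYM} under a uniform version of \eqref{dHYMPosIntro}, and the passage from the uniform to the non-uniform criterion, where projectivity enters via perturbation by ample classes and restriction to subvarieties, is the actual content of \cite{Takahashi_dHYM, DatarPingali_dHYM, Song_NakaiMoishezon}. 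Your mass-concentration/Siu-decomposition paragraph invokes the right toolbox but deploys it at the wrong joint: in the literature it serves to build and glue (singular) subsolutions inductively on subvarieties, not to derive a potential bound by compactness. As written, the step ``closedness reduces to a uniform $C^0$ bound'' and the proposed contradiction argument for that bound have no known proof.
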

We refer to the condition \eqref{dHYMPosIntro} as \emph{dHYM positivity} or (following \cite{Song_NakaiMoishezon}) as the \emph{dHYM Nakai-Moishezon criterion}.

Note in particular that the existence of supercritical solutions of \eqref{dHYMIntro} does not depend on the choice of K\"ahler form representing $[\omega_0]$. 

\begin{rmk}
We will always assume that $X$ is toric. In this case, it is enough to test \eqref{dHYMPosIntro} on toric submanifolds $V \subset X$. Furthermore, we will always work with a torus-invariant representative $\omega_0$. Then a solution $\alpha \in [\alpha_0]$ is necessarily torus-invariant, by uniqueness.
\end{rmk}
\begin{rmk}\label{SurfaceRmk} When $X$ is a surface, the supercritical assumption is not needed, and it was shown already in \cite{JacobYau_special_Lag} that that the existence of a solution to \eqref{dHYMIntro} on a compact K\"ahler surface (not necessarily projective) is equivalent to the inequality \eqref{dHYMPosIntro} for all irreducible curves $V \subset X$ (this criterion is called ``twisted positivity" in  \cite{Collins_stability}). This is because the dHYM equation \eqref{dHYMIntro} on surfaces is equivalent to a certain complex Monge-Amp\`ere equation, and \eqref{dHYMPosIntro} in this case becomes the genuine Nakai-Moishezon criterion for a corresponding K\"ahler class.
\end{rmk}
The results of Leung-Yau-Zaslow mentioned in Remark \ref{LYZRmk} are closely related to the Thomas-Yau conjecture on stability and the existence of special Lagrangians. Thus, in the toric case, one can try to use Theorem \ref{dHYMThm}, together with known mirror symmetry results, in order to obtain applications to the (non)existence of special Lagrangians. 

This problem was first studied by Collins and Yau in \cite{CollinsYau_dHYM_momentmap}, Section 9 (before Theorem \ref{dHYMThm} was proved), where an approach based on certain one-parameter degenerations of Lagrangian submanifolds was proposed. Note that, as we will make clear, Collins and Yau work with a class of Lagrangians which is very similar to the one we will consider, i.e. Strominger-Yau-Zaslow (SYZ) transforms of a high power of an ample line bundle $L^{\otimes k}$, close to the large volume limit $k[\omega_0]$, for $k \gg 1$ (see Remark \ref{CollinsYauRmk}).

As the authors explain, their approach, while natural and geometric, involves certain key technical difficulties. 

\subsection{Toric weak Fano manifolds} 
As in the Introduction, let us assume that $X$ is a weak Fano manifold, with mirror Landau-Ginzburg model $(\cY \to \cM,\,W\!: \cY \to \C)$ (see Section \ref{MirrorSec}).
\begin{thm}\label{FanoThm} Let $X$ be a toric weak Fano manifold with a fixed K\"ahler class $\omega_0$ and a holomorphic line bundle $L$. For all sufficiently large, fixed $k > 0$, there exist classes
\begin{equation*}
\Gamma,\,\Gamma_V \in H_n(\cY_{q_k}, \{\Rea(W(k\omega_0)) \gg 0\}; \Z),
\end{equation*} 
where $V$ ranges over all toric submanifolds of $X$, such that, if $[\omega_0]$ is \emph{generic} in the sense that it does not lie in the union of finitely many proper analytic subvarieties of $H^{1, 1}(X, \R)$ determined by $c_1(L)$, then the dHYM positivity condition \eqref{dHYMPosIntro} for $(X,  [\omega_0], c_1(L^{\vee}))$ is equivalent to the phase inequalitites for periods 
\begin{equation*}
\arg\left( \int_{\Gamma_V} (-1)^{\codim V}e^{-W(k\omega_0)}\Omega_0 \right)< \arg \int_{\Gamma} e^{-W(k\omega_0)}\Omega_0. 
\end{equation*}
Moreover, we have
\begin{equation*}
\Gamma = \Gamma(L^{\otimes k}),\,\Gamma_V = \Gamma(\cS_V)
\end{equation*}
for certain objects $\cS_V \in D^b(X)$, with morphisms
\begin{equation*}
\cS_V[-\codim V] \to L^{\otimes k},  
\end{equation*}
where   
\begin{equation*}
\Gamma\!: K^0(X) \xrightarrow{\sim} H_n(\cY_{q_k}, \{\Rea(W(k\omega_0)) \gg 0\}; \Z)
\end{equation*}
denotes Iritani's isomorphism (see Theorem \ref{GammaThm}). Finally, $k > 0$ can be chosen uniformly for $[L]$ lying in a bounded subset of $K^0(X)$.
\end{thm}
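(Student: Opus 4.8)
The plan is to turn both sides of the asserted equivalence into strict phase inequalities among a finite list of complex numbers and then to match them through Iritani's $\Gcl$-theorem, the discrepancy being of order $k^{-1}$. First I would rewrite the dHYM Nakai--Moishezon criterion. Put $\alpha_0 = c_1(L^{\vee})$ and, for each proper toric submanifold $V$ (it suffices to test toric strata, by the Remark following Theorem \ref{dHYMThm}), set $Z_V := \int_V(\ii\omega_0 + \alpha_0)^{\dim V}$; then \eqref{dHYMPosIntro} reads $\Rea Z_V - \cot(\varphi)\,\Imm Z_V > 0$. Multiplying by $\sin\varphi > 0$ turns this into $\Imm(\overline{Z_V}\,e^{\ii\varphi}) > 0$, i.e. the strict phase inequality $\arg Z_V < \varphi$. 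A short computation using $\ii\omega_0 + \alpha_0 = \ii\,\overline{\omega_0 + \ii\alpha_0}$ gives $\varphi = \arg Z_X$ with $Z_X = \int_X(\ii\omega_0 + \alpha_0)^n$, so dHYM positivity becomes the family $\arg Z_V < \arg Z_X$. Since $Z_V$ and $Z_X$ are homogeneous of degrees $\dim V$ and $n$ in $(\omega_0,\alpha_0)$, their arguments are invariant under $(\omega_0, L)\mapsto(k\omega_0, L^{\otimes k})$; this is why the dHYM condition for $(X, [\omega_0], c_1(L^{\vee}))$ coincides with the one natural at scale $k$.

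Next I would introduce the mirror objects. For $i_V\colon V\hookrightarrow X$ take $\cS_V := (i_V)_*(L^{\otimes k}|_V)\in D^b(X)$; the toric strata are local complete intersections, so the Koszul resolution of $\olo_V$ furnishes the morphisms $\cS_V[-\codim V]\to L^{\otimes k}$, which under \eqref{HMSIntro} mirror the Lagrangian degenerations of \cite{CollinsYau_dHYM_momentmap}. With $\Gamma = \Gamma(L^{\otimes k})$ and $\Gamma_V = \Gamma(\cS_V)$ under Iritani's isomorphism, the $\Gcl$-theorem (Theorem \ref{GammaThm}) evaluates the periods as cohomological central charges,
\[
\int_{\Gamma(E)} e^{-W(k\omega_0)}\Omega_0 = C\int_X e^{-\ii k[\omega_0]}\,\Gcl_X\,\ch(E),
\]
for a common nonzero constant $C$. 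Using $\ch(L^{\otimes k}) = e^{-k\alpha_0}$ for $\Gamma$, Grothendieck--Riemann--Roch to push the integral for $\cS_V$ onto $V$, and $\Gcl_X = 1 + (\text{higher degree})$, the leading terms in $k$ are $C\tfrac{(-k)^n}{n!}Z_X$ and $C\tfrac{(-k)^{\dim V}}{(\dim V)!}Z_V$, each correct up to a factor $1 + O(k^{-1})$. The sign $(-1)^{\codim V}$ converts $(-1)^{\dim V}$ into the common $(-1)^n$, so that after cancelling the positive real factors and the common phase $(-1)^n C$, the period inequality in the statement reduces, at leading order in $k$, to exactly $\arg Z_V < \arg Z_X$.

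The remaining step is to control the $O(k^{-1})$ error. The higher-degree part of $\Gcl_X$ and the subleading terms of $e^{-\ii k[\omega_0]}$ perturb each argument by $O(k^{-1})$, and this bound is uniform for $[L]$ in a bounded subset of $K^0(X)$ because the corrections are polynomial in $\ch(L)$. For each of the finitely many toric $V$, the wall $\{\arg Z_V = \arg Z_X\} = \{\Imm(\overline{Z_V}Z_X) = 0\}$ is cut out by a polynomial in $[\omega_0]$ with coefficients determined by $c_1(L)$, hence is a proper real-analytic subvariety of $H^{1,1}(X,\R)$; their union is the locus to be avoided. Off this locus each leading-order comparison is strict with a definite gap, so taking $\min_V$ of these finitely many gaps and choosing $k$ so large that the uniform $O(k^{-1})$ perturbation is smaller than this minimum, every strict inequality passes, in both directions, from the cohomological level to the exact periods. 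This yields both the claimed equivalence and the uniformity in $k$.

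I expect the delicate points to be two. First, making the $O(k^{-1})$ estimate genuinely uniform over all toric strata and over bounded $[L]$ simultaneously requires the $\Gcl$-theorem in a form with explicit, $k$-stable remainders rather than a bare asymptotic. Second, one must verify that each wall $\{\Imm(\overline{Z_V}Z_X) = 0\}$ is proper, i.e. that $\Imm(\overline{Z_V}Z_X)$ does not vanish identically in $[\omega_0]$, so that generic $[\omega_0]$ exists; this, together with tracking the branch of $\arg$ so that all arguments remain in $(0,\pi)$ (where the supercritical hypothesis and the positivity of $L^{\vee}$ enter), is the main bookkeeping obstacle.
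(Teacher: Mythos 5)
There is one genuine gap in your proposal: for your choice $\cS_V := (i_V)_*(L^{\otimes k}|_V)$, the claimed morphisms $\cS_V[-\codim V]\to L^{\otimes k}$ need not exist. The Koszul resolution of $\olo_V$, for $V = D_1\pitchfork\cdots\pitchfork D_c$ a toric stratum, produces a boundary morphism $\olo_V[-c]\to\olo_X(-D_1-\cdots-D_c)$, not a morphism to $\olo_X$; equivalently, by adjunction (Grothendieck duality) for the regular embedding $i_V$,
\begin{equation*}
\Hom_{D^b(X)}\bigl((i_V)_*(L^{\otimes k}|_V)[-c],\,L^{\otimes k}\bigr)\;\cong\; H^0\bigl(V,\det N_{V/X}\bigr),
\end{equation*}
which can vanish. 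Concretely, on $X=\Bl_p\PP^2$ with $V=E$ the exceptional curve, $\det N_{E/X}=\olo(E)|_E\cong\olo_{\PP^1}(-1)$ has no nonzero sections, so there is \emph{no} nonzero morphism $(i_E)_*(L^{\otimes k}|_E)[-1]\to L^{\otimes k}$ whatsoever. Since these morphisms are precisely what ties the phase inequalities to exact triangles and hence to Thomas--Yau stability, this is not cosmetic. The repair is cheap: take $\cS_V:=(i_V)_*\bigl(L^{\otimes k}|_V\otimes\det N_{V/X}\bigr)$ --- for a divisor this is $L^{\otimes k}(V)\otimes\olo_V$, which is exactly the paper's object $\cS(kL,k_VV)$ specialised to $k_V=1$ --- so that the identity map furnishes the canonical morphism by the same adjunction; the twist changes $\ch(\cS_V)$ only in degrees above $\codim V$, hence perturbs your leading-order asymptotics within the $O(k^{-1})$ error you already control.

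With that fix, your argument goes through and is structurally the same as the paper's: dHYM positivity is rewritten as the phase inequality \eqref{PhaseIneq} (your $\arg Z_V<\arg Z_X$), Iritani's theorem converts cohomological central charges into periods with relative error $O(k^{-1})$ (the paper's \eqref{AsympX} and \eqref{AsympV}), and genericity (absence of strict semi-stabilisers, i.e.\ the walls $\Imm(\overline{Z_V}Z_X)=0$) plus finiteness of toric strata yields a uniform gap beating the error in both directions. The genuine difference is how the periods of $\Gamma_V$ are produced: you apply GRR to a single pushforward with one scale $k$, whereas the paper never invokes GRR, instead building $\cS_V$ as iterated cones of twisted structure sheaves of thickened divisors with a hierarchy of scales $k,\hat k\gg k_1,\ldots,k_r\gg1$ and induction on codimension. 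Your route is shorter and has cleaner codimension bookkeeping, but the paper's construction produces the objects \emph{together with} their boundary morphisms tautologically, which is exactly the point your version missed. One smaller imprecision: Theorem \ref{GammaThm} does not assert $\int_{\Gamma(E)}e^{-W(k\omega_0)}\Omega_0=C\int_Xe^{-\ii k[\omega_0]}\Gcl_X\,\ch(E)$ for a constant $C$; the prefactor involves $z^{c_1(X)}$, the grading operators $z^{\deg/2}$ and $(2\pi\ii)^{\deg/2}$, and Givental's $I$-function, and it is precisely the weak Fano hypothesis together with the rescaling $q\mapsto q_k$ that gives $I=1+O(k^{-1})$. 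Since the grading operators act diagonally and only top-degree terms survive at leading order this is harmless, but it should be stated as an asymptotic identity, as in \eqref{AsympX}, rather than an exact one.
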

The proof is obtained by a suitable scaling analysis of the toric $\widehat{\Gamma}$-theorem of Iritani \cite{Iritani_gamma}. The $\widehat{\Gamma}$-theorem is discussed in Section \ref{GammaSec} (and stated as Theorem \ref{GammaThm}). Theorem \ref{FanoThm} is proved in Section \ref{FanoThmSec}. 
\begin{rmk}\label{FanoThmRmk} Some comments on Theorem \ref{FanoThm} are in order.
\begin{enumerate}    
\item[$(i)$] Ultimately our scaling analysis is made possible by the following simple scale-invariance property of the (supercritical) dHYM equation: if $(X, \omega, \alpha)$ is a (supercritical) solution of \eqref{dHYMIntro}, then for all $k > 0$ the same holds for $(X, k\omega, k\alpha)$ (see Section \ref{SupercritSec}). 

\item[$(ii)$] It will be clear from the proof of Theorem \ref{FanoThm} that, with the same assumptions and notation, the dHYM positivity condition \eqref{dHYMPosIntro} for $(X,  [\omega_0], c_1(L^{\vee}))$ can be written as 
\begin{equation*}
\arg\left( (-1)^{\codim V}\int_{X} e^{-\ii k\omega_0}\ch(\cS_V)\right)< \arg \int_{X} e^{-\ii k\omega_0}\ch(L^{\otimes k}), 
\end{equation*}
see Remark \ref{CentralChargeRmk}. We will apply this in Section \ref{BSec} in order to describe a relation with Bridgeland stability conditions when $X$ is a toric weak del Pezzo surface, and in particular to prove Theorem \ref{BstabThm}.
\item[$(iii)$] The proof of Theorem \ref{FanoThm} shows that, in principle, the scale factor $k > 0$ could be controlled quantitatively in terms of Givental's $I$-function $I(q, -z)$ for $X$ and so in terms of genus $0$ Gromov-Witten invariants, virtually enumerating rational curves contained in $X$ (we recall the definition of $I(q, -z)$ in Section \ref{GammaSec}). The role of holomorphic curves in the Thomas-Yau conjecture has been emphasised by Li \cite{YangLi_ThomasYau}.    
\item[$(iv)$] The fact that $k > 0$ can be chosen uniformly as $V\subset X$ varies is essential, since it means that we are working on a \emph{fixed} LG model. In the toric case this uniform choice is possible since there are only finitely many irreducible toric subvarieties and so, a priori, only finitely many possible ``destabilisers", i.e. $V \subset X$ violating \eqref{dHYMPosIntro}. Khalid and Sj\"ostr\"om Dyrefelt prove several results on the general finiteness problem for destabilizers, see e.g. \cite{SohaibZak_higherdim}.
\item[$(v)$] The proof of Theorem \ref{FanoThm} shows that the genericity condition required on $[\omega_0]$ is precisely the absence of strict semi-stabilisers $V$ for the Nakai-Moishezon criterion \eqref{dHYMPosIntro}, i.e. $V \subset X$ for which the left hand side of \eqref{dHYMPosIntro} vanishes.   
\end{enumerate}
\end{rmk}
As explained in the Introduction, we will apply homological mirror symmetry for toric manifolds in the sense of Fang-Liu-Treumann-Zaslow \cite{Zaslow_toricHMS} and its non-equivariant version (see e.g. \cite{KuwagakiCohCon, SibillaCohCon, ZhouCohCon}). Its compatibility with the $\widehat{\Gamma}$-theorem is proved in the work of Fang \cite{Fang_charges}. Through these results, Theorem \ref{FanoThm} has the following immediate consequence.
\begin{cor}\label{FSCor} In the situation of Theorem \ref{FanoThm}, there exists a \emph{supercritical} solution to the dHYM equation \eqref{dHYMIntro} on $L^{\vee}$ iff 
\begin{equation*}
\arg\left((-1)^{\codim V}\int_{[\cL_V]} e^{-W(k\omega_0)}\Omega_0 \right)< \arg \int_{[\cL]} e^{-W(k\omega_0)}\Omega_0, 
\end{equation*}
where $\cL$, $\cL_V$ correspond to $L^{\otimes k},\,\cS_V \in D^b(X)$ under the mirror isomorphism 
\begin{equation*}
D^b(X) \cong \FS(\cY_{q_k}, W(k \omega_0)),
\end{equation*}
so that there are morphisms
\begin{equation*}
\cL_V[-\codim V] \to \cL, 
\end{equation*}
and $[\cL]$, $[\cL_V]$ denote their classes in $H_n(\cY_{q_k}, \{\Rea(W(k\omega_0))\gg 0\}; \Z)$ (in the sense of Fang \cite{Fang_charges}).

When $X$ is a toric weak del Pezzo surface, the same result holds without the supercritical assumpion.
\end{cor}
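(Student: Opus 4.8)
The plan is to read the corollary off from Theorem \ref{FanoThm} by bracketing it between the analytic characterisation of dHYM solvability on one side (Theorem \ref{dHYMThm}) and homological mirror symmetry together with Fang's $\widehat{\Gamma}$-compatibility on the other. First I would invoke Theorem \ref{dHYMThm} applied to $[\alpha_0] = c_1(L^{\vee})$: a supercritical solution of \eqref{dHYMIntro} on $L^{\vee}$ exists for some, equivalently any, representative $\omega \in [\omega_0]$ if and only if the dHYM positivity condition \eqref{dHYMPosIntro} holds for all proper irreducible subvarieties $V \subset X$. Since $X$ is toric and $\omega_0$ is torus-invariant, the toric reduction recalled after Theorem \ref{dHYMThm} restricts this test to the finitely many toric subvarieties, which is exactly the range of $V$ appearing in Theorem \ref{FanoThm}.

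Next I would apply Theorem \ref{FanoThm} verbatim: under the genericity hypothesis on $[\omega_0]$ and for $k \gg 0$ (both inherited, since we are in the situation of that theorem), the condition \eqref{dHYMPosIntro} for $(X, [\omega_0], c_1(L^{\vee}))$ is equivalent to the phase inequalities for periods over the Iritani cycles $\Gamma = \Gamma(L^{\otimes k})$ and $\Gamma_V = \Gamma(\cS_V)$, with the prescribed morphisms $\cS_V[-\codim V] \to L^{\otimes k}$ in $D^b(X)$. The only remaining task is to transport this data across the mirror. Using the toric homological mirror symmetry equivalence \eqref{HMSIntro} of Fang-Liu-Treumann-Zaslow and its non-equivariant form, I would define $\cL$, $\cL_V$ to be the images of $L^{\otimes k}$, $\cS_V$ in $\FS(\cY_{q_k}, W(k\omega_0))$; since an equivalence of triangulated categories preserves shifts and morphisms, the data $\cS_V[-\codim V] \to L^{\otimes k}$ is carried to $\cL_V[-\codim V] \to \cL$ with no change of indexing or direction.

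Finally I would invoke Fang's theorem \cite{Fang_charges} that \eqref{HMSIntro} intertwines Iritani's isomorphism $\Gamma$ with the assignment of rapid-decay cycles $[\,\cdot\,] \in H_n(\cY_{q_k}, \{\Rea(W(k\omega_0)) \gg 0\}; \Z)$ to objects of the Fukaya-Seidel category. This identifies $\Gamma(L^{\otimes k})$ with $[\cL]$ and $\Gamma(\cS_V)$ with $[\cL_V]$ in rapid-decay homology, so that the period integrals of $e^{-W(k\omega_0)}\Omega_0$ over the two descriptions of each cycle agree. Substituting these identifications into the inequalities of Theorem \ref{FanoThm} produces exactly the stated phase inequalities, closing the chain of equivalences in the supercritical case.

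For the weak del Pezzo surface case I would simply note that Remark \ref{SurfaceRmk} removes the supercritical hypothesis from Theorem \ref{dHYMThm} when $\dim X = 2$: there solvability of \eqref{dHYMIntro} is equivalent to \eqref{dHYMPosIntro} tested on all irreducible curves, again toric by the toric reduction, with no phase restriction, and nothing else in the argument used supercriticality. The main obstacle is not a hard estimate, since Theorem \ref{FanoThm}, Theorem \ref{dHYMThm}, and the mirror equivalence are all in hand, but rather the careful bookkeeping in the last step: one must check that Fang's compatibility matches the integration cycles and the sign $(-1)^{\codim V}$ precisely, so that the cohomological periods computed through $\Gamma$ coincide with the Lagrangian periods $\int_{[\cL]} e^{-W(k\omega_0)}\Omega_0$ and $\int_{[\cL_V]} e^{-W(k\omega_0)}\Omega_0$ with no unaccounted phase or orientation discrepancy.
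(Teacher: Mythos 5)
Your proposal is correct and follows exactly the paper's route: the paper obtains Corollary \ref{FSCor} as an ``immediate consequence'' of Theorem \ref{FanoThm} by combining the Chu--Li--Takahashi equivalence of Theorem \ref{dHYMThm} (with the toric reduction to toric subvarieties, and Remark \ref{SurfaceRmk} replacing it in the surface case) with Fang--Liu--Treumann--Zaslow homological mirror symmetry and Fang's compatibility of that equivalence with Iritani's $\widehat{\Gamma}$-isomorphism. Your extra bookkeeping on shifts, morphisms, and the identification of integration cycles is precisely the content the paper leaves implicit in the word ``immediate.''
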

\begin{rmk}\label{CollinsYauRmk} The geometric approach to the same problem proposed by Collins and Yau \cite{CollinsYau_dHYM_momentmap} Section 9, using one-parameter degenerations of a fixed Lagrangian $\tilde{\cL}$, also involves scaling the K\"ahler form and the Lagrangian by a possibly very large positive parameter. From their viewpoint, this occurs because the limiting Lagrangian $\tilde{\cL}_{\infty}$ only defines an object of the Fukaya-Seidel category after this scaling. In comparison, as will be clear from the proof of Theorem \ref{FanoThm}, the scaling we use is due to our application of the $\widehat{\Gamma}$-theorem.   
\end{rmk}
It follows from the work of Fang \cite{Fang_charges} (see in particular his Remark 1.3) that, when $L^{\vee}$ is ample, the cycles
\begin{equation*}
[\cL^{\vee}] = \Gamma(L^{- k}),\,[\cL] = \Gamma(L^{\otimes k})
\end{equation*}
can be represented by the Strominger-Yau-Zaslow (SYZ) transforms $(L^{- k}, h^{- k})$, respectively $(L^{\otimes k}, h^{\otimes k})$, where $h$ denotes any Hermitian metric on the fibres of $L$. We denote the latter SYZ transform by
\begin{equation*}
\cL_{\syz}  = \cL_{\syz}(L^{\otimes k}, h^{\otimes k}).
\end{equation*}
The local results of Leung-Yau-Zaslow \cite{LeungYauZaslow} (see Remark \ref{LYZRmk}), adapted to the compact toric case (see e.g. \cite{Chan_survey} and \cite{CollinsYau_dHYM_momentmap}, Section 9), provide a holomorphic volume form $\Omega(k \omega_0)$, mirror to $k\omega_0$, such that $\cL_{\syz}$ is special Lagrangian with respect to $\Omega(k \omega_0)$ iff $(L^{-  k}, h^{-  k})$ satisfies the dHYM equation \eqref{dHYMIntro}, i.e. on the \emph{dual} line bundle, with respect to the background metric $k\omega_0$. Thus, it is natural to call \emph{supercritical} a special Lagrangian section corresponding to a supercritical solution of \eqref{dHYMIntro} on the dual line bundle.

The supercritical condition and solvability of \eqref{dHYMIntro} are invariant under a common rescaling $([\omega_0], [\alpha_0]) \mapsto (k[\omega_0], k[\alpha_0])$ for $k > 0$ (see Section \ref{SupercritSec}). 

Therefore, we obtain the following application of Theorem \ref{FanoThm}.
\begin{cor}\label{ThomasYauCor} In the situation of Corollary \ref{FSCor}, if $L^{\vee}$ is ample, then 
\begin{equation*}
\cL \in \FS(\cY_{q_k}, W(k \omega_0))
\end{equation*}
satisfies a form of the Thomas-Yau conjecture: namely, there exists a \emph{supercritical special Lagrangian Strominger-Yau-Zaslow representative} $\cL_{\syz}$ of $ \cL $  with respect to $\Omega(k \omega_0)$ iff the morphisms  
\begin{equation*}
\cL_V[-\codim V] \to \cL  
\end{equation*} 
in $\FS(\cY_{q_k}, W(k \omega_0))$ do not destabilise $\cL$, in the sense that the phase inequalities for periods
\begin{equation*}
\arg\left((-1)^{\codim V}\int_{[\cL_V]} e^{-W(k\omega_0)}\Omega_0 \right) < \arg \int_{\cL} e^{-W(k\omega_0)}\Omega_0 
\end{equation*}
hold.

When $n = 2$, i.e. if $X$ is a toric weak del Pezzo surface, the supercritical assumption is not necessary.    
\end{cor}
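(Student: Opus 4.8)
The plan is to read off Corollary \ref{ThomasYauCor} directly from Corollary \ref{FSCor}, inserting two pieces of the SYZ dictionary: the Leung--Yau--Zaslow correspondence of Remark \ref{LYZRmk}, adapted to the compact toric setting, and Fang's geometric representation of the integration cycle $[\cL]$ under the ampleness hypothesis. No new analysis beyond Theorem \ref{FanoThm} is required; everything reduces to identifying the abstract objects appearing in Corollary \ref{FSCor} with their geometric counterparts.

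First I would invoke Corollary \ref{FSCor}, which already gives the equivalence between the existence of a supercritical solution of the dHYM equation \eqref{dHYMIntro} on $L^{\vee}$ (with background $k\omega_0$) and the phase inequalities for periods over the abstract cycles $[\cL_V]$ and $[\cL]$. It then remains to translate the dHYM statement into the special Lagrangian statement. By the Leung--Yau--Zaslow correspondence in its toric form (Remark \ref{LYZRmk}), the SYZ transform $\cL_{\syz} = \cL_{\syz}(L^{\otimes k}, h^{\otimes k})$ is special Lagrangian with respect to the mirror volume form $\Omega(k\omega_0)$ precisely when $(L^{-k}, h^{-k})$ solves \eqref{dHYMIntro}, and this solution is supercritical exactly when $\cL_{\syz}$ is supercritical special Lagrangian, by the definition adopted above. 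Since varying the Hermitian metric $h$ moves $\cL_{\syz}$ only within its Hamiltonian isotopy class, solvability of the dHYM equation for some metric is the same as the existence of a supercritical special Lagrangian SYZ representative inside the Hamiltonian isotopy class of $\cL$.

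Next I would upgrade the period integrals from abstract cycles to the geometric Lagrangian. Here the ampleness of $L^{\vee}$ enters: by Fang's result (his Remark 1.3), when $L^{\vee}$ is ample the class $[\cL] = \Gamma(L^{\otimes k})$ is represented by the SYZ Lagrangian section $\cL$ itself, so that $\int_{[\cL]} e^{-W(k\omega_0)}\Omega_0$ may be replaced by $\int_{\cL} e^{-W(k\omega_0)}\Omega_0$. After recording the identification of the mirror volume form $\Omega(k\omega_0)$ with the period integrand $e^{-W(k\omega_0)}\Omega_0$ (Remark \ref{VolumeFormRmk}), the right-hand side of the inequality in Corollary \ref{FSCor} becomes exactly the right-hand side in the present statement. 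The signs $(-1)^{\codim V}$, the morphisms $\cL_V[-\codim V] \to \cL$, and the invariance of both the supercritical condition and the dHYM equation under the common rescaling $([\omega_0], [\alpha_0]) \mapsto (k[\omega_0], k[\alpha_0])$ are all carried over verbatim from Corollary \ref{FSCor}. For the surface case $n = 2$, the supercritical assumption is dropped throughout by appealing to Remark \ref{SurfaceRmk}, exactly as in the final clause of Corollary \ref{FSCor}.

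The only genuinely delicate point is the cycle identification under ampleness: one must be sure that the geometric Lagrangian $\cL$ indeed defines the homology class $[\cL] \in H_n(\cY_{q_k}, \{\Rea(W(k\omega_0)) \gg 0\}; \Z)$, and that its period agrees with that of the abstract cycle. This is precisely where ampleness of $L^{\vee}$ is used, and where Remark \ref{UnamplenessRmk} warns that the statement would otherwise fail, since without ampleness $\cL$ may not even define a class in rapid decay homology, or may fail to be cohomologous to the natural $[\cL]$. The remaining bookkeeping, namely matching $\Omega(k\omega_0)$ to the integrand $e^{-W(k\omega_0)}\Omega_0$ and tracking the supercritical condition across the mirror, is routine given the background assembled in the earlier sections.
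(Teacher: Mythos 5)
Your proposal is correct and follows essentially the same route as the paper: the paper likewise deduces Corollary \ref{ThomasYauCor} from Corollary \ref{FSCor} by combining Fang's Remark 1.3 (ampleness of $L^{\vee}$ giving $[\cL] = \Gamma(L^{\otimes k})$ represented by $\cL_{\syz}$ itself), the toric Leung--Yau--Zaslow correspondence of Remark \ref{LYZRmk}, the rescaling invariance $([\omega_0],[\alpha_0]) \mapsto (k[\omega_0],k[\alpha_0])$ of the supercritical dHYM problem, and Remark \ref{SurfaceRmk} for $n=2$. The only cosmetic slips are your parenthetical describing Corollary \ref{FSCor} as having background $k\omega_0$ (it concerns $(X,[\omega_0],c_1(L^{\vee}))$, with the rescaling invariance you cite doing the bridging) and the appeal to Remark \ref{VolumeFormRmk}, which is not needed since both inequalities already share the integrand $e^{-W(k\omega_0)}\Omega_0$ and only the cycle changes.
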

\begin{rmk}\label{VolumeFormRmk} We can state Corollary \ref{ThomasYauCor} in a way which is perhaps more natural by using the SYZ transform $\mathcal{F}^{\syz}$ on differential forms (see \cite{Chan_survey, ChanLeung_SYZ}). Since this is surjective, there exist representatives $\omega_k \in [\omega_0]$ such that, for all $k > 0$, we have
\begin{equation*}
\mathcal{F}^{\syz}(e^{\ii k \omega_k}) = \Omega_0. 
\end{equation*}
On the other hand, according to \cite{ChanLeung_SYZ}, Theorem 1.1 (see also \cite{Chan_survey}, Theorem 3.6), we also have
\begin{equation*}
\mathcal{F}^{\syz}(e^{\ii k \omega_k}) = e^{W(k \omega_k)} \Omega(k \omega_k)(1 + O(k^{-1})) = e^{W(k \omega_0)} \Omega(k \omega_k)(1 + O(k^{-1})),  
\end{equation*}
using the fact the LG potential $W(k \omega_0 )$ only depends on the cohomology class. So, we find
\begin{equation*}
\Omega(k \omega_k) = e^{-W(k \omega_0)} \Omega_0 (1 + O(k^{-1})). 
\end{equation*}
At the same time, the SYZ Lagrangian $\cL_{\syz}$ obtained from $(L^{\otimes k}, h^{\otimes k})$ using the K\"ahler form $\omega_k$ is special Lagrangian with respect to $\Omega(k \omega_k) $ iff the supercritical dHYM equation is solvable for $[\alpha_0] = - k c_1(L)$ with respect to the background metric $k\omega_k$. However, as we observed, this is equivalent to solvability for $[\alpha_0] = c_1(L)$ with respect to $\omega_0$. 

Thus, we can restate Corollary \ref{ThomasYauCor} as follows: \emph{for all sufficiently large $k > 0$, there exists a supercritical special Lagrangian Strominger-Yau-Zaslow representative $\cL_{\syz}$ of $\cL$  with respect to $\Omega(k \omega_k)$ iff the morphisms  
\begin{equation*}
\cL_V[-\codim V] \to \cL  
\end{equation*} 
in $\FS(\cY_{q_k}, W(k \omega_k))$ do not destabilise $\cL$, in the sense that the phase inequalities for periods
\begin{equation*}
\arg\left((-1)^{\codim V}\int_{[\cL_V]} \Omega(k \omega_k) \right) < \arg \int_{ \cL } \Omega(k \omega_k)
\end{equation*}
hold. As we noticed, the supercritical assumption can be removed for toric weak del Pezzo surfaces.}  
\end{rmk}
\begin{rmk} Li's inequality \eqref{LiPhaseInequ} is proved for \emph{compact} special Lagrangians (under further assumptions). The Strominger-Yau-Zaslow representative $\cL_{\syz}$ is noncompact. So it is not immediately clear how to compare \eqref{LiPhaseInequ} with the phase inequalities in Corollary \ref{ThomasYauCor}. 
\end{rmk}
\begin{rmk} Versions of the $\widehat{\Gamma}$-theorem (Theorem \ref{GammaThm}), i.e. the ``mirror-symmetric Gamma conjecture", are also known for some non-toric or non-Fano manifolds. The work \cite{Iritani_SYZGamma} emphasises the connection of these results with the Strominger-Yau-Zaslow approach to mirror symmetry. 
\end{rmk}
\subsection{Lower phase}
The supercritical phase assumption can also be removed in a few higher-dimensional cases. Here we only discuss the special example $X = \Bl_p \PP^n$.  
\begin{prop}\label{LowerPhaseProp} In the situation of Corollary \ref{FSCor}, suppose $X = \Bl_p \PP^n$. Assume $L^{\vee}$ is ample. Then there exists a unique constant $\hat{\theta} \in \R$, satisfying
\begin{equation*}
\hat{\theta} = -\arg\left(-(-2\pi \ii)^{-n} \int_{\cL} e^{-W(k \omega_0)/z} \Omega_0\right) + O(k^{-1}) \mod 2\pi,
\end{equation*}
such that $\cL$ admits a special Lagrangian Strominger-Yau-Zaslow representative $\cL_{\syz}$ with respect to $\Omega(k \omega_0)$ iff the phase inequalities for periods
\begin{equation*}
\hat{\theta} - \frac{\pi}{2} < -\arg\left((-2\pi \ii)^{-n} \int_{[\cL_V]} e^{-W(k \omega_0)/z} \Omega_0\right) < \hat{\theta} + \frac{\pi}{2} 
\end{equation*}
hold for all toric \emph{divisors} $V$.
\end{prop}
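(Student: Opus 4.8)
The plan is to run the same reduction used for Theorem \ref{FanoThm} and Corollary \ref{FSCor}, and then to replace the general supercritical Nakai-Moishezon criterion (Theorem \ref{dHYMThm}) by a sharper, two-sided criterion available for the special geometry of $X = \Bl_p \PP^n$. First I would use Iritani's $\widehat{\Gamma}$-theorem, exactly as in the proof of Theorem \ref{FanoThm}, to rewrite each period $\int_{[\cL_V]} e^{-W(k\omega_0)/z}\Omega_0$ and $\int_{\cL} e^{-W(k\omega_0)/z}\Omega_0$ in terms of the twisted Chern-character integrals $\int_X e^{-\ii k\omega_0}\ch(\cS_V)$ and $\int_X e^{-\ii k\omega_0}\ch(L^{\otimes k})$, up to the $\widehat{\Gamma}$-class factors and an error $O(k^{-1})$. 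This identifies $\hat{\theta}$ with minus the argument of the total central charge; existence and uniqueness of $\hat{\theta}$ (mod $2\pi$) is then immediate, since for $k \gg 0$ the period $\int_{\cL} e^{-W(k\omega_0)/z}\Omega_0$ is a fixed nonzero complex number whose argument the displayed formula prescribes. To leading order in $k$ the quantity $\hat{\theta}$ is the global dHYM phase $\varphi$ of $(X, k\omega_0, k\, c_1(L^{\vee}))$, and for each toric divisor $V$ the tested quantity $-\arg((-2\pi\ii)^{-n}\int_{[\cL_V]} e^{-W(k\omega_0)/z}\Omega_0)$ is the phase attached to $\int_V(\omega_0 + \ii\, c_1(L^{\vee}))^{n-1}$. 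In this way the statement reduces to a purely cohomological claim on $X$: that solvability of the dHYM equation \eqref{dHYMIntro} on $L^{\vee}$, without the supercritical assumption, is equivalent to asking that for every toric divisor $V$ the associated phase lie in the open window of width $\pi$ centred at the global phase.

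The core of the argument is therefore this dHYM statement on $\Bl_p \PP^n$, and here I would exploit that $X$ has Picard rank $2$. I would fix the two generators of the nef cone, corresponding to the blow-down $\Bl_p\PP^n \to \PP^n$ and the projection $\Bl_p\PP^n \to \PP^{n-1}$ (namely the pullback $H$ of the hyperplane class and the class $H-E$), and expand both $k[\omega_0]$ and $c_1(L^{\vee})$ in this basis, so that all the intersection numbers $\int_V(\omega_0 + \ii\, c_1(L^{\vee}))^{\dim V}$ become explicit polynomials in two real coordinates. Using the structure of the intersection ring of $\Bl_p\PP^n$, I would then show that the test functional on any proper toric subvariety of codimension $\geq 2$ factors, up to positive constants, through the two extremal rays of the Mori cone (the line in the exceptional $\PP^{n-1}$ and the strict transform of a line through $p$); consequently its positivity is implied by the conditions attached to the corresponding toric divisors, and no higher-codimension $V$ produces an independent constraint. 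This is what makes ``for all toric divisors $V$'' the correct and sufficient test set, and it is also the reason the criterion is two-sided: the two extremal directions of the rank-$2$ Mori cone yield precisely the lower and the upper bounds $\hat{\theta} - \tfrac{\pi}{2}$ and $\hat{\theta} + \tfrac{\pi}{2}$.

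It remains to establish solvability of dHYM in the full, not necessarily supercritical, range, and I expect this to be the main obstacle, since Theorem \ref{dHYMThm} is not directly applicable. I would handle it by a direct analysis of the torus-invariant equation, in the spirit of the surface reduction of Remark \ref{SurfaceRmk}: using the fibration $\Bl_p\PP^n \to \PP^{n-1}$ and torus invariance, the dHYM equation for $(k\omega_0, k\,c_1(L^{\vee}))$ reduces to a fibrewise, essentially one-dimensional problem over $\PP^{n-1}$, whose solvability can be read off from the two extremal conditions already identified, with the lower bound $\hat{\theta} - \tfrac{\pi}{2} < \cdots$ encoding exactly the failure of supercriticality that the general theorem cannot detect. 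The delicate point will be the \emph{necessity} of this lower inequality, namely that if it is violated for one of the two extremal divisors then \eqref{dHYMIntro} is genuinely unsolvable, rather than merely non-supercritical; I would argue this by producing, from a hypothetical solution, a fibre or subvariety on which the lifted Lagrangian angle would be forced to leave the admissible interval, contradicting the pointwise angle bounds. Combining this solvability criterion with the reduction of the first paragraph and the divisor-sufficiency of the second then yields the stated equivalence, uniformly for $k \gg 0$.
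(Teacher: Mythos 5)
Your first paragraph reproduces the paper's actual reduction: the $\widehat{\Gamma}$-theorem asymptotics \eqref{AsympX}, \eqref{AsympV}, together with the complex-conjugation bookkeeping of Section \ref{PhaseIneqSec}, convert the period inequalities into cohomological phase inequalities on $X$, and that part is fine. The genuine gap is everything after that. The core ingredient --- that on $X = \Bl_p\PP^n$, \emph{without} the supercritical assumption, the dHYM equation on $L^{\vee}$ is solvable iff a unique lifted angle $\hat\theta$ exists for which every toric divisor satisfies the two-sided window $\hat\theta - \tfrac{\pi}{2} < \arg\left(-\int_V e^{-\ii\omega_0}\ch(L^{\vee})\right) < \hat\theta + \tfrac{\pi}{2}$ --- is not proved in the paper either: it is Theorem 2 of Jacob--Sheu \cite{JacobSheu}, a substantial PDE result obtained by a cohomogeneity-one (Calabi ansatz) reduction to an ODE, and the paper's proof of Proposition \ref{LowerPhaseProp} consists precisely of citing that theorem and then running the translation you describe. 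You propose instead to reprove this criterion, and your sketch does not succeed.

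Concretely: (i) your second paragraph frames the problem as shrinking a Nakai--Moishezon test set from all toric subvarieties to divisors, but below the supercritical phase there is no Nakai--Moishezon-type theorem to shrink --- Theorem \ref{dHYMThm} is exactly what fails there, so \eqref{dHYMPosIntro} over all $V$ is not known (or expected) to characterise solvability, and the window criterion is a genuinely different statement produced by ODE analysis rather than a restriction of \eqref{dHYMPosIntro}. Relatedly, the two-sidedness of the window reflects the failure of supercriticality (each toric divisor contributes both a lower and an upper bound), not the rank-two Mori cone structure you invoke. (ii) In your third paragraph, torus invariance alone does not reduce the equation to a ``fibrewise, essentially one-dimensional problem'': $(\C^*)^n$-invariance still leaves a fully nonlinear PDE in $n$ real variables on the moment polytope. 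What makes $\Bl_p\PP^n$ tractable is the larger $U(n)$-symmetry, under which the equation becomes an ODE; carrying out that analysis, in particular the necessity of the lower bound which you yourself flag as the delicate point, is the content of Jacob--Sheu's paper and is only asserted, never argued, in your sketch. A further minor point: the proposition asserts a unique $\hat\theta \in \R$, while the displayed congruence pins it down only modulo $2\pi$; the selection of the correct lift (the value of the Lagrangian phase operator) is again part of the cited theorem, so your claim that existence and uniqueness are ``immediate'' from the nonvanishing of the period trivialises a real piece of the statement. To repair the proof, either cite \cite{JacobSheu} as the paper does, or supply the full cohomogeneity-one solvability analysis.
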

Proposition \ref{LowerPhaseProp} is an application of a result of Jacob and Sheu \cite{JacobSheu}. The proof is given in Section \ref{LowerPhaseSec}.
\subsection{Higher rank}
An analogue of the dHYM equation \eqref{dHYMIntro} for a higher rank holomorphic vector bundle $E\to X$ is discussed in \cite{CollinsYau_momentmaps_preprint}, Section 8.1, following proposals in the physics literature (see e.g. \cite{DirichletBook}, Section 5.2.2.2). This is given explicitly by 
\begin{equation}\label{HigherRankdHYM}
\Imm\left( e^{-\ii \hat{\phi}_{[\omega_0]}(E)} \big(\omega \otimes \operatorname{Id}_E  - F_E(h))^n\right) = 0,
\end{equation}
where $F_E(h)$ denotes the curvature of the Chern connection of a hermitian metric $h$ on the fibres of $E$, the notation $\Imm$ denotes the skew-Hermitian part of an endomorphism with respect to the metric $h$, and $e^{-\ii \hat{\phi}_{[\omega_0]}(E)}$ is uniquely determined by integrating the trace (and only depends on the topology of $E$).

Solutions $(E, h)$ of \eqref{HigherRankdHYM} should be related to special Lagrangian multi-sections, although we do not know a precise result.

\begin{rmk}Examples from \cite{Dervan_Zconnections, KellerScarpa} suggest that in the higher rank case it is especially important to allow a $B$-field, namely, a fixed representative $\beta$ of a $(1, 1)$-class $[\beta_0]$, deforming \eqref{HigherRankdHYM} to
\begin{equation}\label{BfieldHigherRankdHYM}
\Imm\left( e^{-\ii \hat{\phi}_{[\omega_0], [\beta_0]}(E)} \big((\omega - \ii \beta)\otimes \operatorname{Id}_E  - F_E(h))^n\right) = 0.
\end{equation}
\end{rmk}

The higher rank dHYM equation \eqref{HigherRankdHYM} and a class of closely related PDEs (such as vector-bundle complex Monge-Amp\`ere and $Z$-critical equations) have been studied is several works, including \cite{Dervan_Zconnections, Pingali_vectorMA}. Here we focus on the obstructions proved in \cite{KellerScarpa} in a special case (see also \cite{Takahashi_Jequ} for related results). 

Let $E \to X$ denote a rank $2$ holomorphic vector bundle over a compact K\"ahler surface $X$. Let us denote by $V \subset X$ any irreducible curve, and by $L \subset E$ a proper line sub-bundle. Fix a K\"ahler class $[\omega_0]$ and $(1, 1)$-classes $[\alpha_0]$, $[\beta_0]$. Let us set 
\begin{align*}
& Z^{\dhym}_X(E) := -\ii \int_X e^{-\ii \omega_0} e^{-\beta_0} \ch(E),\,Z^{\dhym}_X(L) := -\ii \int_X e^{-\ii \omega_0} e^{-\beta_0} \ch(L),\\
& Z^{\dhym}_V(E|_{V}) := -\ii \int_V e^{-\ii \omega_0} e^{-\beta_0} \ch(E|_{V}). 
\end{align*}

Keller and Scarpa \cite{KellerScarpa} introduce a space $\cH^+(E)$ of \emph{$Z^{\dhym}$-positive hermitian metrics}, a notion we recall here in Section \ref{HigherRankPosSec}; this is the same as the space of \emph{subsolutions} of the dHYM equation studied in \cite{Dervan_Zconnections}. Roughly speaking, the dHYM equation \eqref{HigherRankdHYM} is only elliptic on $\cH^+(E)$.   
\begin{thm}[Keller-Scarpa \cite{KellerScarpa}]\label{KellerScarpaThm} Let $E \to X$ be as above. Suppose there exists a $Z^{\dhym}$-positive hermitian metric $h \in \cH^+(E)$ (see Definition \ref{HighRankPositivityDef}). Then, for all $V$, we have 
\begin{equation}\label{HighRankPositivity}
\arg\left(Z^{\dhym}_X(E)\right) < \arg\left(Z^{\dhym}_V(E|_V)\right) < \arg\left(Z^{\dhym}_X(E)\right) + \pi.
\end{equation} 
Moreover, if $E$ is indecomposable and $h \in \cH^+(E)$ solves the higher rank dHYM equation with $B$-field \eqref{BfieldHigherRankdHYM}, then, for all line sub-bundles $L \subset E$, we have
\begin{equation}\label{HighRankStability}
\arg\left(Z^{\dhym}_X(E)\right) - \pi < \arg\left(Z^{\dhym}_X(L) \right) < \arg\left(Z^{\dhym}_X(E)\right).
\end{equation}  
\end{thm}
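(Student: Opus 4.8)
The plan is to adapt the classical template---``a solution (or subsolution) of a Hermitian bundle equation forces numerical slope inequalities''---to the complex, phased setting of the dHYM operator. Write $\theta_E := \arg Z^{\dhym}_X(E)$, so that $e^{-\ii\theta_E} Z^{\dhym}_X(E) > 0$; both \eqref{HighRankPositivity} and \eqref{HighRankStability} then become statements about the sign of $\Imm\!\big(e^{-\ii\theta_E} Z^{\dhym}_\bullet(\bullet)\big)$, with the curve restriction expected to land in the upper half-plane relative to $E$ and the line sub-bundle in the lower one.

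For \eqref{HighRankPositivity} I would argue directly from the pointwise content of $Z^{\dhym}$-positivity (Definition \ref{HighRankPositivityDef}). Membership $h \in \cH^+(E)$ is a fibrewise positivity of the $\End(E)$-valued top-degree form $\Theta_h$ representing $e^{-\ii\theta_E} e^{-\ii\omega} e^{-\beta} \exp(\tfrac{\ii}{2\pi} F_E(h))$. The first key point is that this positivity is stable under restriction: pulling $\Theta_h$ and $h$ back to an irreducible curve $V \subset X$ yields a fibrewise-positive $\End(E|_V)$-valued form on $V$, whose fibrewise trace is a volume form of a definite sign on $V$. Integrating the trace over $V$ and using Chern--Weil to identify $\int_V \operatorname{tr}\Theta_h|_V$ with (a fixed phase times) $Z^{\dhym}_V(E|_V)$, I would read off $\Imm\!\big(e^{-\ii\theta_E} Z^{\dhym}_V(E|_V)\big) > 0$, which is exactly the two-sided bound. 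Note that this step uses only positivity, not the equation---consistent with the hypothesis of the first assertion.

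For \eqref{HighRankStability} I would run the analogue of the argument that Hermitian--Yang--Mills implies slope stability. Given a line sub-bundle $L \subset E$, equip $L$ with the induced sub-bundle metric and use the Gauss--Codazzi relation $F_L = \operatorname{pr}_L \circ F_E(h) \circ \operatorname{pr}_L - \gamma^{*}\wedge\gamma$, where $\gamma$ is the second fundamental form of $L$ in $E$ and $\ii\,\gamma^{*}\wedge\gamma$ is a semipositive $\End(L)$-valued $(1,1)$-form. Feeding the induced Chern character forms into $Z^{\dhym}_X(L)$ and using the dHYM equation \eqref{BfieldHigherRankdHYM} to rewrite the diagonal block $\operatorname{pr}_L \circ F_E(h) \circ \operatorname{pr}_L$ in terms of the prescribed phase $e^{-\ii\hat\phi}$, the terms involving $\gamma^{*}\wedge\gamma$ survive with a definite sign after taking $\Imm\!\big(e^{-\ii\theta_E}\,\cdot\,\big)$. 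Because $E$ is indecomposable, $\gamma$ cannot vanish identically---otherwise $L$ would be a holomorphic and metric direct summand, splitting $E$---so this contribution is \emph{strictly} negative, giving $\Imm\!\big(e^{-\ii\theta_E} Z^{\dhym}_X(L)\big) < 0$ and hence \eqref{HighRankStability}.

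The hard part will be the sign bookkeeping in the phased setting. In the real Hermitian--Yang--Mills case $\Lambda F = \mathrm{const}$ is a single equation and the degree inequality drops out of integrating a manifestly nonnegative quantity; here one must carry the rotations $e^{-\ii\hat\phi}$ and $e^{-\ii\theta_E}$ through the Gauss--Codazzi substitution and check that, after projecting onto the real axis fixed by $\theta_E$, the $\gamma^{*}\wedge\gamma$ term lands with the correct sign and is not overwhelmed by the background $\omega$- and $\beta$-contributions. Pinning down this sign is exactly where the hypothesis $h \in \cH^+(E)$---equivalently, that $h$ is a subsolution on which \eqref{BfieldHigherRankdHYM} is elliptic---enters, and it is the technical crux; the restriction-and-trace step for curves is comparatively soft and should follow once the pointwise positivity in Definition \ref{HighRankPositivityDef} is unwound.
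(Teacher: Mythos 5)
The first thing to record is that the paper itself contains \emph{no} proof of Theorem \ref{KellerScarpaThm}: it is imported wholesale from \cite{KellerScarpa}, and the only surrounding material (Section \ref{HigherRankPosSec}) merely rewrites \eqref{HighRankPositivity}--\eqref{HighRankStability} as $\Imm\bigl(Z^{\dhym}_V(E|_V)/Z^{\dhym}_X(E)\bigr)>0$, $\Imm\bigl(Z^{\dhym}_X(L)/Z^{\dhym}_X(E)\bigr)<0$ and records Definition \ref{HighRankPositivityDef}. So your attempt can only be judged against the cited source, and as a reconstruction it has two concrete problems. For \eqref{HighRankPositivity}, you restrict the wrong object: membership in $\cH^+(E)$ is \emph{not} fibrewise positivity of the top-degree form representing $e^{-\ii\theta_E}e^{-\ii\omega}e^{-\beta}\exp(\tfrac{\ii}{2\pi}F_E(h))$, but positivity of its derivative $\tfrac{\del}{\del F_E(h)}$, an $\End(E)$-valued form of degree $2(n-1)$ --- on a surface, essentially a rotation of $(\omega-\ii\beta)\otimes\operatorname{Id}_E-F_E(h)$. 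The distinction is not cosmetic: a top-degree form pulls back to zero on a curve, so ``restrict $\Theta_h$ to $V$'' is vacuous as written; only the degree-two derivative form has a meaningful restriction, trace and integral. Even after this correction, your restrict-trace-Chern--Weil step bounds \emph{some} real linear functional of $Z^{\dhym}_V(E|_V)$, and you must still verify it is the one in \eqref{HighRankPositivity}: the phase $\hat\phi_{[\omega_0]}(E)$ is normalised by integrating the trace of $((\omega-\ii\beta)\otimes\operatorname{Id}_E-F_E)^2$, whereas $\theta_E=\arg Z^{\dhym}_X(E)$ carries the extra rotation coming from the prefactor $-\ii$ in the definition of $Z^{\dhym}_X$; with natural normalisations these differ by $\pi/2$, which would convert your claimed bound on $\Imm\bigl(e^{-\ii\theta_E}Z^{\dhym}_V(E|_V)\bigr)$ into a bound on $\Rea\bigl(e^{-\ii\theta_E}Z^{\dhym}_V(E|_V)\bigr)$. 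This is exactly the check your sketch defers to ``sign bookkeeping'', and it is not bookkeeping.

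The second half has a structural gap, and it is not the one you flag. The classical Gauss--Codazzi argument works for Hermitian--Yang--Mills because that equation is \emph{linear} in the curvature, so only the diagonal block of $F_E$ ever enters. Equation \eqref{BfieldHigherRankdHYM} is quadratic: writing $M=(\omega-\ii\beta)\otimes\operatorname{Id}_E-F_E(h)$ in the $C^\infty$ splitting $E\cong L\oplus Q$ with $Q=E/L$, the $(L,L)$-block of $M^2$ is $M_{LL}\wedge M_{LL}+M_{LQ}\wedge M_{QL}$, and the off-diagonal blocks are built from the covariant derivative $\nabla\gamma$ of the second fundamental form, not from $\gamma$ itself. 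The resulting term $(\nabla\gamma)^{*}\wedge\nabla\gamma$ is a wedge of $(1,1)$-forms and has \emph{no pointwise sign} on a surface (squares of real $(1,1)$-forms are indefinite), so it is not absorbed by the semidefiniteness of $\ii\gamma^{*}\wedge\gamma$ that your argument tracks, nor directly by $h\in\cH^+(E)$. Any complete proof must confront these terms --- for instance by integrating by parts to trade $\nabla\gamma$ against curvature-times-$\gamma$ contributions and only then invoking the subsolution positivity, or by abandoning the pointwise route for a variational (Kempf--Ness type) argument along the degeneration of $E$ to $L\oplus Q$, where $\cH^+(E)$ enters as a convexity hypothesis. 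Your proposal never mentions the off-diagonal curvature contribution at all, so \eqref{HighRankStability} is asserted rather than proved; the one step of that paragraph that does survive is the use of indecomposability to guarantee $\gamma\not\equiv 0$ and hence strictness.
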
 
Note that in fact \cite{KellerScarpa} contains a more general result, applying to a larger class of PDEs (containing the dHYM equation with $B$-field), and yielding stronger obstructions.

In the situation of Theorem \eqref{KellerScarpaThm}, in certain cases, we can use the $\widehat{\Gamma}$-theorem to translate the positivity and stability conditions \eqref{HighRankPositivity}, \eqref{HighRankStability} into phase inequalities for periods of Lagrangians. We denote by $W( \omega - \ii \beta )$ the LG potential of a complexified K\"ahler class. 
\begin{thm}\label{HighRankFanoThm} Let $X$ be a toric weak del Pezzo surface. Suppose that, for all $k > 0$, there are bundles $E_k \to X$ given by nontrivial extensions 
\begin{equation*}
0 \to L^{\otimes k}_1 \to E_k \to L^{\otimes k}_2 \to 0,
\end{equation*}
for fixed line bundles $L_i \to X$. Let $E := E_1$. 

If there exists $h \in \cH^+(E)$ solving the higher rank dHYM equation with $B$-field \eqref{BfieldHigherRankdHYM} on $E$, then, for all sufficiently large $k > 0$, if $[\omega_0]$ is \emph{generic} in the sense that it does not lie in the union of finitely many proper analytic subvarieties of $H^{1, 1}(X, \R)$ determined by $c_1(L_i)$, $i =1, 2$, we have
\begin{align*}
&\arg \int_{\Gamma} e^{-W(k(\omega_0 - \ii \beta_0)) }\Omega_0 <  \arg\left(\int_{\Gamma_V} e^{-W(k(\omega_0 - \ii \beta_0)) }\Omega_0 \right) < \arg \int_{\Gamma} e^{-W(k(\omega_0 - \ii \beta_0)) }\Omega_0 + \pi,\\
&\arg \int_{\Gamma} e^{-W(k(\omega_0 - \ii \beta_0)) }\Omega_0 - \pi <  \arg\left( \int_{\Gamma_{L_1}} e^{-W(k(\omega_0 - \ii \beta_0)) }\Omega_0 \right) < \arg \int_{\Gamma} e^{-W(k(\omega_0 - \ii \beta_0)) }\Omega_0,
\end{align*}
where we let
\begin{equation*}
\Gamma = \Gamma(E_k),\,\Gamma_V = \Gamma(\cS_V),\,\Gamma_{L_1} = \Gamma(L^{\otimes k}_1)
\end{equation*}
for $L^{\otimes k}_1 \in D^b(X)$ and for certain objects $\cS_V \in D^b(X)$, with morphisms
\begin{equation*}
\cS_V[-1] \to E_k,\, L^{\otimes k}_1 \to E_k,  
\end{equation*}
and   
\begin{equation*}
\Gamma\!: K^0(X) \xrightarrow{\sim} H_n(\cY_{q_k}, \{\Rea(W(k(\omega_0 - \ii \beta_0)))\gg 0\}; \Z)
\end{equation*}
denotes Iritani's isomorphism (see Theorem \ref{GammaThm}). 
\end{thm}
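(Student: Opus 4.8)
The plan is to run the strategy of Theorem \ref{FanoThm} in the rank-two, complexified setting, with the Keller-Scarpa obstruction (Theorem \ref{KellerScarpaThm}) playing the role that the dHYM Nakai-Moishezon criterion (Theorem \ref{dHYMThm}) plays there. Concretely, I would first turn the solvability hypothesis on $E = E_1$, via Theorem \ref{KellerScarpaThm} together with a scale-invariance computation, into strict phase inequalities for the central charges $Z^{\dhym}$ of $E_k$ at the rescaled complexified class $k(\omega_0 - \ii \beta_0)$; these would then be transported to the stated periods $\int e^{-W(k(\omega_0 - \ii \beta_0))}\Omega_0$ by the scaling analysis of Iritani's $\widehat{\Gamma}$-theorem, exactly as in Theorem \ref{FanoThm} but now evaluated at a complexified point of $\cM$.

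For the first step, since the extension is nontrivial $E = E_1$ is indecomposable, so Theorem \ref{KellerScarpaThm} applies to the given $h \in \cH^+(E)$ and yields \eqref{HighRankPositivity} for every irreducible curve $V$ and \eqref{HighRankStability} for the line sub-bundle $L_1 \subset E$, at the class $\omega_0 - \ii\beta_0$. I would then observe that these inequalities are \emph{exactly} scale-invariant. Writing $\gamma = -\ii\omega_0 - \beta_0$ and using additivity of the Chern character along the extension, so that $\ch(E_k) = e^{k c_1(L_1)} + e^{k c_1(L_2)}$, a degree count on the surface $X$ gives
\begin{equation*}
Z^{\dhym}_X(E_k) = k^2 \, Z^{\dhym}_X(E), \quad Z^{\dhym}_V(E_k|_V) = k\, Z^{\dhym}_V(E|_V), \quad Z^{\dhym}_X(L_1^{\otimes k}) = k^2\, Z^{\dhym}_X(L_1),
\end{equation*}
where on the left the charges are computed at $k(\omega_0 - \ii\beta_0)$ and on the right at $\omega_0 - \ii\beta_0$. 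As the scaling factors are positive, the arguments are unchanged, so \eqref{HighRankPositivity} and \eqref{HighRankStability} hold verbatim for $E_k$, $E_k|_V$ and $L_1^{\otimes k}$ at $k(\omega_0 - \ii\beta_0)$, for every $k > 0$, with phase gaps bounded below uniformly in $k$.

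For the second step, I would feed these inequalities into the $\widehat{\Gamma}$-theorem scaling analysis underlying Theorem \ref{FanoThm}, evaluated at the complexified point $k(\omega_0 - \ii\beta_0) \in \cM$. As there, this identifies $-\ii\int_X e^{-\ii k(\omega_0 - \ii\beta_0)}\ch(\,\cdot\,)$ with a period $\int_{\Gamma(\cdot)} e^{-W(k(\omega_0 - \ii\beta_0))}\Omega_0$ up to a fixed nonzero constant and a multiplicative factor $1 + O(k^{-1})$, via Iritani's isomorphism $\Gamma$. Taking $\Gamma = \Gamma(E_k)$ and $\Gamma_{L_1} = \Gamma(L_1^{\otimes k})$, and letting $\cS_V$, $\Gamma_V = \Gamma(\cS_V)$ be the objects and cycles attached to each curve $V$ as in Theorem \ref{FanoThm} (with the morphisms $\cS_V[-1] \to E_k$ and the inclusion $L_1^{\otimes k} \to E_k$ coming from the extension), the three central charges above are matched to the three periods in the statement. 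Because the phase gaps from the first step are uniform in $k$, for $k$ large the $O(k^{-1})$ error cannot close them and the strict period inequalities follow; the genericity hypothesis on $[\omega_0]$ is used precisely as in Theorem \ref{FanoThm}, to ensure each leading central charge is nonzero, so that the arguments are well defined and no boundary case occurs.

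The step I expect to be the main obstacle is the passage to the complexified Kähler class. One has to check that the asymptotic expansion behind the $\widehat{\Gamma}$-theorem, and its uniformity, persist along the ray $k(\omega_0 - \ii\beta_0)$ into the large-volume cusp even though the $B$-field $k\beta_0$ is scaled simultaneously with the volume $k\omega_0$; and one has to match the curve-restricted charge $Z^{\dhym}_V(E_k|_V)$ with an integral $\int_X e^{-\ii k(\omega_0 - \ii\beta_0)}\ch(\cS_V)$ over $X$, i.e.\ define $\cS_V$ compatibly with Grothendieck-Riemann-Roch and fix the sign conventions so that the two-sided inequalities \eqref{HighRankPositivity} emerge without an extra $(-1)^{\codim V}$ factor, exactly as in the bookkeeping of Theorem \ref{FanoThm}.
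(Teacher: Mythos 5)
Your proposal follows essentially the same route as the paper's proof: the Keller--Scarpa inequalities \eqref{HighRankPositivity}, \eqref{HighRankStability} are transported to the stated period inequalities via the scaling analysis of Iritani's $\widehat{\Gamma}$-theorem at the complexified class, using $\ch(E_k)=\ch(L_1^{\otimes k})+\ch(L_2^{\otimes k})$, the same objects $\cS_V=[E_k\to E_k\otimes\olo(V)^{\otimes k_1}]$ with the double scaling $k\gg k_1\gg 1$, and genericity to keep the strict inequalities safe from the $O(k^{-1})$ errors. Your intermediate ``exact scale-invariance'' step is precisely the leading-term computation the paper performs inside the $\widehat{\Gamma}$-integral, so the two arguments coincide; the one caveat (shared with the paper, which invokes Theorem \ref{KellerScarpaThm} without comment) is that a nontrivial extension need not be indecomposable, so the indecomposability hypothesis of Keller--Scarpa is really an implicit assumption here rather than, as you assert, a consequence of the extension being nonsplit.
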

Theorem \ref{HighRankFanoThm} is proved in Section \ref{HighRankProofSec}. Through toric homological mirror symmetry \cite{Abouzaid_toricHMS, Zaslow_toricHMS}, this yields the following analogue of Corollary \ref{FSCor}. 
\begin{cor}\label{HighRankFSCor} In the situation of Theorem \ref{HighRankFanoThm}, if the higher rank dHYM equation \eqref{BfieldHigherRankdHYM} has a positive solution $h \in \cH^+(E)$ on $E$, then, for all sufficiently large $k > 0$, we have
\begin{align*} 
&\nonumber \arg \int_{[\cL]} e^{-W(k(\omega_0 - \ii \beta_0))}\Omega_0 < \arg\left( \int_{[\cL_V]} e^{-W(k(\omega_0 - \ii \beta_0)) }\Omega_0 \right)  < \arg \int_{[\cL]} e^{-W(k(\omega_0 - \ii \beta_0))}\Omega_0 + \pi,\\
& \arg \int_{[\cL]} e^{-W(k(\omega_0 - \ii \beta_0))}\Omega_0 - \pi < \arg\left( \int_{[\cL_1]} e^{-W(k(\omega_0 - \ii \beta_0))}\Omega_0 \right) < \arg \int_{[\cL]} e^{-W(k(\omega_0 - \ii \beta_0))}\Omega_0,
\end{align*}
where $\cL$ is the image of $E_k \in D^b(X)$ under the mirror isomorphism 
\begin{equation*}
D^b(X) \cong \FS(\cY_{q_k}, W(k ((\omega_0 - \ii \beta_0)))),
\end{equation*}
$\cL_V,\,\cL_1 \in \FS(\cY_{q_k}, W(k (\omega_0 - \ii \beta_0)))$ are mirror to $\cS_V$, $L^{\otimes k}_1$, with morphisms
\begin{equation*}
\cL_V[-1] \to \cL,\,\cL_1 \to \cL, 
\end{equation*}
and $[\cL]$, $[\cL_V]$, $[\cL_1]$ denote their classes in $H_n(\cY_{q_k}, \{\Rea(W(k(\omega_0 - \ii \beta_0)))\gg 0\}; \Z)$.
\end{cor}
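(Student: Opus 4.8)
The plan is to deduce the corollary from Theorem \ref{HighRankFanoThm} purely by transport of structure through the homological mirror symmetry equivalence, following exactly the pattern by which Corollary \ref{FSCor} was obtained from Theorem \ref{FanoThm}. Theorem \ref{HighRankFanoThm} already supplies the two chains of inequalities for the periods of $e^{-W(k(\omega_0 - \ii\beta_0))}\Omega_0$ over the Iritani classes $\Gamma = \Gamma(E_k)$, $\Gamma_V = \Gamma(\cS_V)$ and $\Gamma_{L_1} = \Gamma(L_1^{\otimes k})$ in the rapid decay homology group $H_n(\cY_{q_k}, \{\Rea(W(k(\omega_0 - \ii\beta_0))) \gg 0\}; \Z)$, together with the morphisms $\cS_V[-1] \to E_k$ and $L_1^{\otimes k} \to E_k$ in $D^b(X)$. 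The only remaining task is to identify these data with their Fukaya-Seidel counterparts.

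First I would apply the toric homological mirror symmetry equivalence $D^b(X) \cong \FS(\cY_{q_k}, W(k(\omega_0 - \ii\beta_0)))$ of Fang-Liu-Treumann-Zaslow \cite{Abouzaid_toricHMS, Zaslow_toricHMS}, in its non-equivariant form \cite{KuwagakiCohCon, SibillaCohCon, ZhouCohCon}, now invoked at the complexified K\"ahler parameter $k(\omega_0 - \ii\beta_0) \in \cM$. This carries $E_k$, $\cS_V$ and $L_1^{\otimes k}$ to the objects $\cL$, $\cL_V$, $\cL_1 \in \FS(\cY_{q_k}, W(k(\omega_0 - \ii\beta_0)))$ of the statement. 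Since the equivalence is an exact functor, the morphisms $\cS_V[-1] \to E_k$ and $L_1^{\otimes k} \to E_k$ are sent to morphisms $\cL_V[-1] \to \cL$ and $\cL_1 \to \cL$ in the Fukaya-Seidel category, with shifts preserved.

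Next I would match the period integrals. By the compatibility of toric homological mirror symmetry with the $\widehat{\Gamma}$-theorem (Theorem \ref{GammaThm}) established by Fang \cite{Fang_charges}, the Iritani class $\Gamma(F)$ of an object $F \in D^b(X)$ coincides, as an element of the rapid decay homology group, with the class $[\cL_F]$ of its mirror Lagrangian $\cL_F$. Applied to $F = E_k, \cS_V, L_1^{\otimes k}$ this gives $\Gamma = [\cL]$, $\Gamma_V = [\cL_V]$ and $\Gamma_{L_1} = [\cL_1]$, so that the periods $\int_{\Gamma(\cdot)} e^{-W(k(\omega_0 - \ii\beta_0))}\Omega_0$ appearing in Theorem \ref{HighRankFanoThm} equal term by term the periods $\int_{[\cL_\bullet]} e^{-W(k(\omega_0 - \ii\beta_0))}\Omega_0$ of the corollary. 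Substituting these identifications into the two chains of inequalities of Theorem \ref{HighRankFanoThm} produces exactly the asserted inequalities.

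The main point to verify — and the only place where anything beyond formal functoriality is required — is that both the FLTZ equivalence and Fang's identification $\Gamma(F) = [\cL_F]$ persist at the complexified, $B$-field twisted parameter $k(\omega_0 - \ii\beta_0)$ rather than at an honest K\"ahler class. I expect this to be harmless: complexifying the K\"ahler class by $-\ii k\beta_0$ merely shifts the base point $q_k \in \cM$ of the mirror family, and both the FLTZ equivalence and Fang's comparison are formulated for arbitrary points of $\cM$; moreover the potential $W(k(\omega_0 - \ii\beta_0))$ depends only on this cohomology class, as already used in the proof of Theorem \ref{HighRankFanoThm}. Once this is checked, the deduction is purely transport of structure, precisely as in the passage from Theorem \ref{FanoThm} to Corollary \ref{FSCor}.
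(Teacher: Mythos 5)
Your proposal is correct and follows essentially the same route as the paper: the corollary is obtained from Theorem \ref{HighRankFanoThm} by transport of structure through the Fang--Liu--Treumann--Zaslow equivalence together with Fang's compatibility of toric homological mirror symmetry with the $\widehat{\Gamma}$-theorem, exactly as Corollary \ref{FSCor} is deduced from Theorem \ref{FanoThm}. Your explicit attention to the $B$-field twisted parameter $k(\omega_0 - \ii\beta_0)$ (which the paper leaves implicit) is a reasonable point and is resolved the way you suggest, since the complexified class only shifts the base point in $\cM$.
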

A concrete example of bundles $E_k$ on $X = \Bl_p\PP^2$ to which this result applies is given in Section \ref{KellerScarpaExm}, following \cite{KellerScarpa}. 

Under suitable assumptions, $\cL$ should be quasi-isomorphic to a Lagrangian multi-section, and one could hope that its class in $\FS(\cY_{q_k}, W(k (\omega_0 - \ii \beta_0)))$ can be represented by a smooth special Lagrangian multi-section iff the morphisms $\cL_V[-1] \to \cL$, $\cL_1 \to \cL$ do not destabilise $\cL$, in the sense that the phase inequalities for periods appearing in Corollary \ref{HighRankFSCor} hold. 

In this setup, there is a natural analogue of Definition \ref{BstabDef}.
\begin{definition}\label{HighRankBstabDef} In the situation of Theorem \ref{HighRankFanoThm} and Corollary \ref{HighRankFSCor}, suppose that the $k[\beta_0]$-twisted slopes of $L^{\otimes k}_i$ are nonpositive, $\mu_{k[\omega_0], k[\beta_0]}(L^{\otimes k}_i) \leq 0$, $i=1,2$.

Then, we say that the Lagrangian $\tilde{\cL} := \cL[1]$ is Bridgeland unstable, iff its mirror $E_k[1]$ is unstable with respect to the stability condition
\begin{equation*}
\sigma_{k [\omega_0], k[ \beta_0]} = \left(\Coh^{\sharp}_{k[\beta_0]}(X), Z_{k [\omega_0], k[ \beta_0]}\right).
\end{equation*} 
\end{definition}
The definition of $\sigma_{k [\omega_0], k[ \beta_0]}$ is recalled in Section \ref{HighRankBInstPropProof}. We will see that Definition \ref{HighRankBstabDef} is consistent (Section \ref{HighRankBInstPropProof}) and has the correct central charge (see \eqref{AsympEk}). We also have an analogue of Theorem \ref{BstabThm}. 
\begin{prop}\label{HighRankBInstProp} In the situation of Theorem \ref{HighRankFanoThm}, fix $k >0$ sufficiently large, and suppose that the $k[\beta_0]$-twisted slopes of $L^{\otimes k}_i$ are nonpositive, $\mu_{k[\omega_0], k[\beta_0]}(L^{\otimes k}_i) \leq 0$, $i=1,2$. 

If $\cL_V$ or $\cL_1$ violate the phase inequalities for periods appearing in Corollary \ref{HighRankFSCor}, then $\tilde{\cL}$ is Bridgeland unstable, destabilised by $\cL_V$ or $\cL_1$.
\end{prop}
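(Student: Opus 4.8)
The plan is to prove the statement directly: assuming that one of the period inequalities of Corollary \ref{HighRankFSCor} fails, I will exhibit a subobject of $E_k[1]$ in the tilted heart $\Coh^{\sharp}_{k[\beta_0]}(X)$ whose central-charge phase is \emph{strictly larger} than that of $E_k[1]$. This is exactly the failure of $\sigma_{k[\omega_0], k[\beta_0]}$-semistability, hence, by Definition \ref{HighRankBstabDef}, the Bridgeland instability of $\tilde{\cL} = \cL[1]$. The argument runs parallel to the rank-one Proposition \ref{BstabProp} behind Theorem \ref{BstabThm}; the only genuinely new inputs are the extension structure of $E_k$ and the identification of the two families of destabilizing subobjects with the mirror objects $\cL_1$ and $\cL_V$.

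First I would verify that the relevant objects lie in the heart and assemble the short exact sequences. The hypothesis $\mu_{k[\omega_0], k[\beta_0]}(L^{\otimes k}_i) \leq 0$ places each $L^{\otimes k}_i$ in the torsion-free part $\mathcal{F}$ of the torsion pair defining $\Coh^{\sharp}_{k[\beta_0]}(X)$; since $\mathcal{F}$ is closed under extensions, the bundle $E_k$ also lies in $\mathcal{F}$, so $L^{\otimes k}_1[1]$, $L^{\otimes k}_2[1]$ and $E_k[1]$ all belong to the heart. Applying $[1]$ to the defining extension $0 \to L^{\otimes k}_1 \to E_k \to L^{\otimes k}_2 \to 0$ yields a triangle all of whose terms are in the heart, hence an honest short exact sequence there, so that $L^{\otimes k}_1[1]$ (the mirror of the morphism $\cL_1 \to \cL$ shifted by $[1]$) is a genuine subobject of $E_k[1]$. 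For the divisorial objects I would use the explicit construction of $\cS_V$ from the proof of Theorem \ref{HighRankFanoThm}: up to the usual twist, $\cS_V$ is the pushforward $i_{V*}(E_k|_V)$ of the restriction to the curve $V$, a torsion sheaf and therefore an object of the heart unshifted; the mirror morphism $\cL_V[-1] \to \cL$ becomes, after $[1]$, a nonzero morphism $\cS_V \to E_k[1]$ in the heart, whose image $I_V \subseteq E_k[1]$ is the subobject I will destabilize with.

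Next I would pass from periods to central charges. By the scaling analysis of the $\Gcl$-theorem underlying Theorem \ref{HighRankFanoThm} (together with the asymptotic \eqref{AsympEk}), each period $\int_{[\,\cdot\,]} e^{-W(k(\omega_0 - \ii \beta_0))}\Omega_0$ agrees, up to a common nonzero factor and a multiplicative error $1 + O(k^{-1})$, with the Bridgeland central charge $Z_{k[\omega_0], k[\beta_0]}$ of the corresponding object. With the shift conventions above, both inequalities of Corollary \ref{HighRankFSCor} read, for $k \gg 1$, as a right-hand half $\arg Z(L^{\otimes k}_1[1]) < \arg Z(E_k[1])$, respectively $\arg Z(I_V) < \arg Z(E_k[1])$, together with a left-hand half of the shape $\arg Z(E_k[1]) - \pi < \arg Z(\,\cdot\,)$. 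Since every object above lies in the heart, its phase lies in $(0,\pi)$, so $\arg Z(E_k[1]) - \pi < 0 < \arg Z(\,\cdot\,)$ and the left-hand halves hold automatically. Thus a failure of either inequality of Corollary \ref{HighRankFSCor} is precisely the assertion $\arg Z(L^{\otimes k}_1[1]) \geq \arg Z(E_k[1])$, respectively $\arg Z(I_V) \geq \arg Z(E_k[1])$. Such a heart-subobject of phase at least $\arg Z(E_k[1])$ already contradicts stability; to obtain genuine instability (rather than strict semistability) I invoke the genericity hypothesis on $[\omega_0]$, which, as in Remark \ref{FanoThmRmk}$(v)$, excludes strict semistabilizers and upgrades $\geq$ to $>$ for $k$ large, so that the $O(k^{-1})$ error cannot restore equality. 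Hence $E_k[1]$ is not $\sigma_{k[\omega_0], k[\beta_0]}$-semistable, destabilized by $L^{\otimes k}_1[1]$ or by $I_V$, i.e. $\tilde{\cL}$ is Bridgeland unstable, destabilized by $\cL_1$ or $\cL_V$.

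The main obstacle is the divisorial case. Unlike the sub-bundle $L^{\otimes k}_1[1]$, which is literally a subobject via the shifted extension, the object $\cS_V$ enters only through a morphism $\cS_V \to E_k[1]$, and one must show that its image $I_V$ is a nonzero heart-subobject whose phase is still controlled by $\arg Z(\cS_V)$. This requires knowing that $\cS_V$ (or the relevant torsion constituent) is semistable of the expected phase, so that passing to the image can only increase the phase, $\arg Z(I_V) \geq \arg Z(\cS_V)$, while the $\Gcl$-matching of central charges is preserved modulo $O(k^{-1})$. Establishing this, together with a bound on the $O(k^{-1})$ error that is uniform in $k$ (so that a strict violation of the period inequality survives the translation to central charges), is where the real work lies; both points are governed by the genericity assumption and the uniform-in-$k$ statements already built into Theorems \ref{FanoThm} and \ref{HighRankFanoThm}.
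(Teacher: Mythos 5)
Your proposal has the same skeleton as the paper's proof: heart membership of $E_k[1]$, $L_1^{\otimes k}[1]$ and $\cS_V$ from the slope hypotheses, translation of the period inequalities of Corollary \ref{HighRankFSCor} into central charge inequalities for $Z_{k[\omega_0],k[\beta_0]}$ via the $\widehat{\Gamma}$-asymptotics, collapse of the two-sided inequalities by the half-plane property, and contraposition; the sub-bundle case (shifted extension as a short exact sequence in the heart) is complete and identical to the paper's. The genuine gap is in the divisorial case, exactly where you place ``the real work'', and your proposed repair would fail rather than merely being unfinished. You route the argument through the image $I_V$ of $\cS_V \to E_k[1]$ and need $\arg Z(I_V) \geq \arg Z(\cS_V)$, which, as you say, requires $\cS_V$ to be $\sigma_{k[\omega_0],k[\beta_0]}$-semistable. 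But $\cS_V \cong E_k \otimes \olo(V)^{\otimes k_1} \otimes \olo_{k_1 V}$ need not be semistable: since $L_2^{\otimes k}$ is locally free, restricting the defining extension stays exact and gives $0 \to L_1^{\otimes k}\otimes\olo(V)^{\otimes k_1}|_{k_1V} \to \cS_V \to L_2^{\otimes k}\otimes\olo(V)^{\otimes k_1}|_{k_1V} \to 0$ in the heart; both outer terms have the same (positive) imaginary part of $Z_{k[\omega_0],k[\beta_0]}$, while their real parts differ by $k k_1\,(c_1(L_2)-c_1(L_1))\cdot[V]$ to leading order, so whenever $c_1(L_1)\cdot [V] > c_1(L_2)\cdot[V]$ the subsheaf has strictly larger phase and destabilises $\cS_V$. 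This happens in the paper's own example (Section \ref{KellerScarpaExm}) with $V$ the exceptional curve. Moreover, the genericity of $[\omega_0]$, which you invoke to govern this step, concerns strict semistabilisers of the Nakai--Moishezon inequality (equalities among leading terms of central charges, cf. Remark \ref{FanoThmRmk} $(v)$); it has no bearing on semistability of the torsion sheaf $\cS_V$.

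What actually closes the gap --- and what implicitly legitimises the paper's terser conclusion ``by using the morphisms $\cS_V \to E_k[1]$ in $\Coh^{\sharp}_{k[\beta_0]}(X)$'' --- is that this morphism is a \emph{monomorphism} of the heart, so that $\cS_V$ itself is a subobject of $E_k[1]$ and neither an image nor any semistability of $\cS_V$ is needed. Its cone is $E_k\otimes\olo(V)^{\otimes k_1}[1]$ (rotate the triangle of $0 \to E_k \to E_k\otimes\olo(V)^{\otimes k_1} \to \cS_V \to 0$), and a morphism in the heart of a t-structure is injective iff its cone again lies in the heart, i.e. iff $E_k\otimes\olo(V)^{\otimes k_1} \in \Coh^{\leq 0}_{k[\beta_0]}(X)$. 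Since $\mu_{k[\omega_0],k[\beta_0]}\bigl(L_i^{\otimes k}\otimes\olo(V)^{\otimes k_1}\bigr) = \mu_{k[\omega_0],k[\beta_0]}(L_i^{\otimes k}) + k k_1 [V]\cdot[\omega_0]$, this holds in the regime $k \gg k_1 \gg 1$ whenever the hypotheses' slopes are \emph{strictly} negative; then a violation of the period inequality, which genericity converts into a strictly reversed central charge inequality for large $k$, exhibits $\cS_V$ as a destabilising subobject of $E_k[1]$, as the Proposition asserts. Note that in the borderline case $\mu_{k[\omega_0],k[\beta_0]}(L_i^{\otimes k}) = 0$, which both your hypotheses and the paper's allow, injectivity genuinely fails (the twisted line bundle then has positive slope), and a separate argument for the kernel is required --- a point on which the paper's own proof is silent as well.
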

Proposition \ref{HighRankBInstProp} is proved in Section \ref{HighRankBInstPropProof}.
\subsection{General toric manifolds}
Let $X$ be a projective toric manifold of dimension $n$, with a fixed (complexified) K\"ahler class $[\omega_0]$. Hodge-theoretic mirror symmetry in this generality is studied in \cite{CoatesCortiIritani_hodge}, and we recall some of its key properties in Section \ref{MirrorSec}. It yields a mirror family $\cY \to \cM$ with a Landau-Ginzburg potential $W\!: \cY \to \C$. There is a fibre $\cY_q \cong (\C^*)^n$ of the mirror family corresponding to $[\omega_0]$. We prove that this satisfies a weak analogue of Theorem \ref{FanoThm}.
\begin{thm}\label{GeneralThm} Let $X$ be a projective toric manifold with a fixed K\"ahler class $[\omega_0]$ and a line bundle $L \to X$. 

For all sufficiently large $k > 0$, if $[\omega_0]$ is \emph{generic} in the sense that it does not lie in the union of finitely many proper analytic subvarieties of $H^{1, 1}(X, \R)$ determined by $c_1(L)$, then there exist \emph{complex} cycles $\Gamma_{L}$, $\Gamma_{L, V}$, as $V$ ranges through toric submanifolds of $X$, representing classes
\begin{equation*}
[\Gamma_{L}],\,[\Gamma_{L, V}] \in H_n(\cY_{q_k}, \{\Rea(W(k\omega_0)) \gg 0\}; \Z) \otimes \C,
\end{equation*}
satisfying
\begin{equation*}
\operatorname{supp}(\Gamma_{L, V}) \subsetneq \operatorname{supp} \Gamma_{L},
\end{equation*}
and a holomorphic volume form $\Omega^{(k)}$, such that the dHYM positivity condition \eqref{dHYMPosIntro} for $(X, [\omega_0], c_1(L^{\vee}))$ is equivalent to 
\begin{equation}\label{GeneralThmPeriodsIneq}
\arg\left((-1)^{\codim V} \int_{\Gamma_{L, V}} e^{-W(k\omega_0)/z}\Omega^{(k)}\right) < \arg \int_{\Gamma_L} e^{-W(k\omega_0)/z}\Omega^{(k)}  
\end{equation}
where the integrals are understood in the sense of their asymptotic expansion as $z \to 0^+$ (as in \cite{CoatesCortiIritani_hodge}, Section 6.2). 

As a consequence, a SYZ Lagrangian section $\cL(L^{\otimes k}, h)$ is Hamiltonian isotopic to a supercritical SYZ Lagrangian section iff the phase inequalities for periods \eqref{GeneralThmPeriodsIneq} hold. 
\end{thm}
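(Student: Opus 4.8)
The plan is to run the same three-step strategy used for Theorem \ref{FanoThm}, but to replace Iritani's integral $\widehat{\Gamma}$-theorem by the Hodge-theoretic mirror symmetry of Coates-Corti-Iritani \cite{CoatesCortiIritani_hodge}, which is available for arbitrary projective toric $X$ yet yields only a \emph{complex} (rather than integral) identification of cycles, realised through oscillatory integrals interpreted asymptotically as $z \to 0^+$. First I would reduce the geometric statement to the dHYM Nakai-Moishezon criterion. By Remark \ref{LYZRmk} and its toric generalisation, the SYZ section $\cL(L^{\otimes k}, h)$ is Hamiltonian isotopic to a supercritical special SYZ section precisely when the \emph{dual} bundle $(L^{\otimes k})^{\vee}$ carries a supercritical dHYM connection with background $k\omega_0$; by the scale-invariance of the (supercritical) dHYM equation (Remark \ref{FanoThmRmk}$(i)$) this is equivalent to solvability for $(X, [\omega_0], c_1(L^{\vee}))$. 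Since $X$ is projective toric, Theorem \ref{dHYMThm} identifies this with the dHYM positivity \eqref{dHYMPosIntro}, which it suffices to test on toric subvarieties $V$.

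Second, I would rewrite \eqref{dHYMPosIntro} as a phase inequality for A-model central charges, exactly as in Remark \ref{FanoThmRmk}$(ii)$. For each toric subvariety $V$, with inclusion $i_V$, one constructs (as in the weak Fano case) an object $\cS_V$, a suitable pushforward of the form $i_{V*}(\cdots)\otimes L^{\otimes k}$, shifted so as to produce the morphism $\cS_V[-\codim V] \to L^{\otimes k}$ from the Koszul/restriction geometry of $V$. Grothendieck-Riemann-Roch then gives, with $\alpha_0 = c_1(L^{\vee})$, an expansion of the form
\[
\int_X e^{-\ii k\omega_0}\ch(\cS_V) = \frac{(-k)^{\dim V}}{(\dim V)!}\int_V(\ii\omega_0 + \alpha_0)^{\dim V}\,\bigl(1 + O(k^{-1})\bigr),
\]
so that the leading term in $k$ is proportional to the quantity tested in \eqref{dHYMPosIntro}. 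Tracking the supercritical phase $\varphi$, the signs $(-1)^{\dim V}$ and $(-1)^{\codim V}$ combine to $(-1)^n$ and cancel against the total charge, so that for $k \gg 1$ and $[\omega_0]$ \emph{generic} (no leading term vanishing, i.e. no strict semistabilisers, cf. Remark \ref{FanoThmRmk}$(v)$) dHYM positivity at $V$ becomes equivalent to $\arg\bigl((-1)^{\codim V}\int_X e^{-\ii k\omega_0}\ch(\cS_V)\bigr) < \arg\int_X e^{-\ii k\omega_0}\ch(L^{\otimes k})$, uniformly over the finitely many $V$.

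Third, and this is where the general toric case departs from the Fano one, I would convert these central charges into periods using \cite{CoatesCortiIritani_hodge}. Their Hodge-theoretic mirror symmetry produces a holomorphic volume form $\Omega^{(k)}$ on $\cY_{q_k}$ and matches the A-model pairing with the B-model oscillatory integrals $\int_\Gamma e^{-W(k\omega_0)/z}\Omega^{(k)}$, understood through their asymptotic expansion as $z \to 0^+$ (\cite{CoatesCortiIritani_hodge}, Section 6.2). Setting $\Gamma_L := \Gamma(L^{\otimes k})$ and $\Gamma_{L, V} := \Gamma(\cS_V)$ as the images of these $K$-theory classes, the matching of central charges with leading periods turns the inequalities of the previous step into \eqref{GeneralThmPeriodsIneq}. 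The cycles are only \emph{complex} because the identification of the $K$-theory lattice with $H_n(\cdots;\Z)$ through the mirror map is not integral outside the Fano case; the inclusion $\operatorname{supp}(\Gamma_{L, V}) \subsetneq \operatorname{supp}\Gamma_L$ reflects that $\cS_V$ is supported on the proper subvariety $V$. Combining with the first step yields the stated equivalence for the SYZ Lagrangian section.

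The main obstacle is this third step. Unlike the clean integral statement of the $\widehat{\Gamma}$-theorem, in the non-Fano case one must work through the $I$- and $J$-functions, the mirror map, and the primitive form underlying $\Omega^{(k)}$, and control the correspondence \emph{simultaneously} in the large-scale parameter $k$ and in the oscillatory parameter $z \to 0^+$. The delicate point is to verify that the subleading corrections, both the $O(k^{-1})$ errors in the GRR expansion and the higher-order terms of the $z$-asymptotics, cannot reverse the \emph{strict} inequalities; this is exactly what the genericity of $[\omega_0]$, i.e. the absence of strict semistabilisers, is designed to guarantee.
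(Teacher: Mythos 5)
There is a genuine gap, and it sits exactly where you flag the ``main obstacle'': your third step presupposes a cycle-valued correspondence that does not exist in the stated generality. You set $\Gamma_L := \Gamma(L^{\otimes k})$ and $\Gamma_{L,V} := \Gamma(\cS_V)$ ``as the images of these $K$-theory classes'' under a map matching $\int_X e^{-\ii k\omega_0}\ch(-)$ with periods $\int e^{-W(k\omega_0)/z}\,\Omega^{(k)}$. Such a map is precisely the content of Iritani's $\widehat{\Gamma}$-theorem (Theorem \ref{GammaThm}), which is available only for toric \emph{weak Fano} $X$. The Hodge-theoretic mirror symmetry of \cite{CoatesCortiIritani_hodge} gives the $D$-module isomorphism $\Theta$ intertwining the higher residue pairing with the Poincar\'e pairing, but it does \emph{not} assign to a $K$-theory class a (complexified) integration cycle whose period is the central charge; producing one for general projective toric $X$ is a Gamma-conjecture-type statement, not a corollary of \cite{CoatesCortiIritani_hodge}. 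Saying the cycles are ``only complex because the identification \dots is not integral outside the Fano case'' concedes the issue without filling it: no identification, integral or complex, is supplied. Relatedly, your claim that the inequalities arise from morphisms $\cS_V[-\codim V] \to L^{\otimes k}$ transported to $\FS(\cY_{q_k}, W(k\omega_0))$ overshoots what is known: the paper states explicitly, right after Theorem \ref{GeneralThm}, that in this generality it does not know how to interpret the inequalities via exact triangles, nor even how to relate $\Gamma_L$ to the Lagrangian section $\cL$.

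The paper's actual proof avoids any $K$-theoretic cycle map. It writes $\int_X e^{-\ii\omega_0}\ch(L)$ and $\int_V e^{-\ii\omega_0}\ch(L)$ as higher residue pairings of $\Theta^{-1}_{[\omega_0]}(e^{-\ii[\omega_0]})$ against $\Theta^{-1}_{[\omega_0]}(\ch(L))$ and $\Theta^{-1}_{[\omega_0]}(\ch(L)\cup\pd(V))$ --- note that your sheaf-theoretic objects $\cS_V$ are replaced by the cohomology classes $\ch(L)\cup\pd(V)$, so no derived-category construction is needed at all --- then observes these quantities are $z$-independent, so the pairing may be specialised at $z=0$ to the classical Grothendieck residue, which is a sum over $\Crit(W_{k[\omega_0]})$. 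The decisive input is \cite{CoatesCortiIritani_hodge}, Lemma 6.2: for $k \gg 1$, $\Theta^{-1}_{k[\omega_0]}(\pd(V))\big|_p$ equals an integer $c_V(p)$ up to $O(k^{-1})$ on a proper nonempty subset $\Lambda_V \subsetneq \Crit(W_{k[\omega_0]})$, and is $O(k^{-1})$ elsewhere. The complex cycles are then \emph{defined} as $\Gamma_L = \sum_{p\in\Crit(W_{k[\omega_0]})}\Theta^{-1}_{k[\omega_0]}(\ch(L))\big|_p\,\Gamma_p$ and $\Gamma_{L,V} = \sum_{p\in\Lambda_V} c_V(p)\,\Theta^{-1}_{k[\omega_0]}(\ch(L))\big|_p\,\Gamma_p$, where the $\Gamma_p$ realise the residue functionals as $z\to 0^+$ asymptotic integrals and $\Omega^{(k)}$ is built from the primitive form times $\Theta^{-1}_{k[\omega_0]}(e^{-\ii[\omega_0]})$. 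This is also where the condition $\operatorname{supp}(\Gamma_{L,V}) \subsetneq \operatorname{supp}(\Gamma_L)$ really comes from: $\Lambda_V$ is a proper subset of critical points. Your justification --- that $\cS_V$ is supported on $V$ --- conflates supports in $X$ with supports of cycles in the mirror $\cY_{q_k}$ and would not yield the stated strict inclusion. Your step 1 (reduction to dHYM positivity via Leung-Yau-Zaslow, scale invariance and Theorem \ref{dHYMThm}) is correct and matches the paper; the genericity discussion at the end is also in the right spirit, but everything hinging on the unproven cycle map in step 3 remains unsupported.
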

The proof of this result, provided in Section \ref{GeneralToricProofSec}, uses properties of the  general mirror maps constructed in \cite{CoatesCortiIritani_hodge}. 

However in this general toric case we do not know if the phase inequalities \eqref{GeneralThmPeriodsIneq} correspond to exact triangles in $\FS(\cY_{q_k}, W(k \omega_0))$. In fact we do not even know how to relate the integration cycle $\Gamma_L$ to the Lagrangian section $\cL$.
\section{dHYM positivity}\label{PositivitySec}    
In this Section we provide a few more technical details concerning deformed Hermitian Yang-Mills connections.
\subsection{Supercritical phase}\label{SupercritSec}
Consider $(X, [\omega_0], [\alpha_0])$ as in the Introduction. We follow the conventions of \cite{Takahashi_dHYM}, Section 1. For fixed representatives $\omega \in [\omega_0]$, $\alpha \in [\omega_0]$ we can form the endomorphism
\begin{equation*}
\omega^{-1} \alpha \in \operatorname{End}(T^{1,0} X),
\end{equation*}
with eigenvalues $\lambda_1, \ldots, \lambda_n$. The \emph{Lagrangian phase operator} is defined as
\begin{equation*}
\vartheta(\omega^{-1} \alpha) := \sum^n_{i = 1} \operatorname{arccot}(\lambda_i).
\end{equation*}
We denote by $\varphi$ a real constant.
\begin{definition} The \emph{supercritical dHYM equation} is the special case of the \emph{constant Lagrangian phase equation} given by
\begin{equation}\label{SupercritCondition}
\vartheta(\omega^{-1} \alpha) = \varphi \in (0, \pi). 
\end{equation} 
\end{definition}
As first proved in \cite{JacobYau_special_Lag}, the constant Lagrangian phase equation actually implies that $\alpha$ solves the dHYM equation \eqref{dHYMIntro}.

Thus, the set of supercritical solutions of \eqref{dHYMIntro} is invariant under a rescaling $(\omega, \alpha) \mapsto (k \omega, k\alpha)$. Clearly, the same holds for the phase $e^{\ii \phi}$, and for the solvability of the dHYM equation \eqref{dHYMIntro}.
\subsection{Phase inequality}\label{PhaseIneqSec} 
Suppose $[\alpha_0] = c_1(L)$. As observed, for example, in \cite{CollinsYau_momentmaps_preprint}, Section 8, we have   
\begin{equation*}
\int_X (\omega_0 + \ii c_1(L))^n = -e^{\ii (n - 2)\frac{\pi}{2}} \int_X e^{-\ii \omega_0}\ch(L),
\end{equation*}
and, more generally, for $V \subset X$, 
\begin{equation*}
\int_V (\omega_0 + \ii c_1(L))^{\dim V} = -e^{\ii (\dim V  - 2)\frac{\pi}{2}} \int_V e^{-\ii \omega_0}\ch(L).
\end{equation*}
Using these identities, one can show that the dHYM positivity condition \eqref{dHYMPosIntro}, in the supercritical phase, can be written equivalently as
\begin{equation*}
\arg\left(-\int_V e^{-\ii \omega_0}\ch(L)\right) > \arg\left(-\int_X e^{-\ii \omega_0}\ch(L)\right) 
\end{equation*}
(see e.g. \cite{YangLi_ThomasYau}, Section 2.6).

Applying this to the dual line bundle $L^{\vee}$, we see that $L^{\vee}$ supports a supercritical solution of \eqref{dHYMIntro} iff 
\begin{equation*} 
\arg\left(-\int_V e^{-\ii \omega_0}\ch(L^{\vee})\right) > \arg\left(-\int_X e^{-\ii \omega_0}\ch(L^{\vee})\right).
\end{equation*}
This condition can be written in terms of $L$ as
\begin{equation*} 
\arg\left(-(-1)^{\dim V}\int_V e^{\ii \omega_0}\ch(L)\right) > \arg\left(-(-1)^n\int_X e^{\ii \omega_0}\ch(L)\right),
\end{equation*}
and so after complex conjugation as
\begin{equation*} 
\arg\left(-(-1)^{\dim V}\int_V e^{-\ii \omega_0}\ch(L)\right) < \arg\left(-(-1)^n\int_X e^{-\ii \omega_0}\ch(L)\right),
\end{equation*}
or equivalently
\begin{equation}\label{PhaseIneq} 
\arg\left((-1)^{\codim V}\int_V e^{-\ii \omega_0}\ch(L)\right) < \arg\left(\int_X e^{-\ii \omega_0}\ch(L)\right).
\end{equation}
\subsection{Higher rank}\label{HigherRankPosSec}
Let $E \to X$ denote a rank $2$ vector bundle over a compact K\"ahler surface $X$. Write $V \subset X$ for any irreducible curve.

Keller and Scarpa \cite{KellerScarpa} formulate their result in terms of the conditions 
\begin{equation*}
\Imm\left( \frac{Z^{\dhym}_V(E|_V)}{Z^{\dhym}_X(E)} \right) > 0,\,\Imm\left( \frac{Z^{\dhym}_X(L)}{Z^{\dhym}_X(E)} \right) < 0, 
\end{equation*} 
which can be written as 
\begin{align*}
&\arg\left(Z^{\dhym}_X(E)\right) < \arg\left(Z^{\dhym}_V(E|_V)\right) < \arg\left(Z^{\dhym}_X(E)\right) + \pi,\\  
&\arg\left(Z^{\dhym}_X(E)\right) - \pi < \arg\left(Z^{\dhym}_X(L) \right) < \arg\left(Z^{\dhym}_X(E)\right). 
\end{align*} 
Explicitly, these are given by
\begin{align}\label{HighRankPhaseInequ}
&\nonumber \arg\left(\int_X e^{-\ii \omega_0} e^{-\beta_0} \ch(E)\right) -\pi\\
&\nonumber< \arg\left(\int_V e^{-\ii \omega_0} e^{-\beta_0} \ch(E|_{V})\right) - \pi < \arg\left(\int_X e^{-\ii \omega_0} e^{-\beta_0} \ch(E)\right),\\  
& \arg\left(\int_X e^{-\ii \omega_0} e^{-\beta_0} \ch(E)\right) - \pi < \arg\left(\int_X e^{-\ii \omega_0} e^{-\beta_0} \ch(L)\right) < \arg\left(\int_X e^{-\ii \omega_0} e^{-\beta_0} \ch(E)\right). 
\end{align} 

Recall that the results of \cite{KellerScarpa} hinge on a suitable positivity (or subsolution) condition.
\begin{definition}[\cite{KellerScarpa}, Definition 2.5]\label{HighRankPositivityDef} The space $\cH^+(E)$ of \emph{$Z^{\dhym}$-positive} hermitian metrics on $E$ (also known as $Z^{\dhym}$-subsolutions) is given by metrics $h$ on $E$ such that   
\begin{equation*}
\Imm\left( e^{-\ii \hat{\phi}_{[\omega_0]}([E, \beta_0])} \frac{\del}{\del F_E(h)}\big((\omega - \ii \beta)\otimes \operatorname{Id}_E  - F_E(h))^n\right)
\end{equation*}
is a positive $2(n-1)$ $\End(E)$-valued form.
\end{definition}
\section{Hodge theoretic mirror symmetry}\label{MirrorSec}
We collect here some key facts about Hodge-theoretic mirror symmetry for toric manifolds.

Suppose $X$ is a projective toric manifold. Following \cite{CoatesCortiIritani_hodge} (which also contains an extensive list of references), the mirror to $X$ is given by a family $\cY \to \cM$, with fibres isomorphic to $(\C^*)^n$, endowed with a regular function
\begin{equation*}
W\!: \cY \to \C,
\end{equation*}
the Landau-Ginzburg (LG) potential. A (complexified) K\"ahler class $[\omega_0]$ determines a point in $\cM$ and so a corresponding regular function on the fibre, which we denote by $W(\omega_0)$.

Let $\Lambda_+$ denote the Novikov ring. There are a mirror map $\tau = \tau(y) \in H^*(X, \C) \otimes \C [\![\Lambda_+]\!][\![y]\!]$ and a $\C[z] [\![\Lambda_+]\!][\![y]\!]$-linear mirror isomorphism
\begin{equation*}
\Theta\!: \GM(W) := H^{n+1}(\Omega^{\bullet}_{\cY/\cM}\{z\}, z d + d W \wedge) \xrightarrow{\,\,\cong\,\,} H^*(X, \C) \otimes \C[z] [\![\Lambda_+]\!][\![y]\!] 
\end{equation*}
such that  $\Theta$ intertwines the Gauss-Manin connection $\nabla^{\GM}$ with the pullback $\tau^*\nabla^{\operatorname{D}}$ of the quantum connection $\nabla^{\operatorname{D}}$ by $\tau$ (see \cite{CoatesCortiIritani_hodge} Section 3.2 for the latter). 

In the $z\to 0$ limit, choosing a suitable generator for $\GM(W)$, namely $\Theta^{-1}_{\omega_0}(1)$, $\Theta$ specialises to an isomorphism between the log Jacobi ring and the quantum cohomology ring.

Moreover, the linear mirror isomorphism $\Theta$ intertwines a natural pairing on $\GM(W)$, known as the higher residue pairing $P$, with the Poincar\'e pairing.

The mirror map is in fact analytic in an open neighbourhood of the large volume limit point. Thus, for fixed, sufficiently large K\"ahler class $\beta$, we have a linear mirror isomorphism 
\begin{equation*}
\Theta_{\beta}\!: \GM(W(\beta)) \xrightarrow{\,\,\cong\,\,} H^*(X, \C) \otimes \C[\![z]\!] 
\end{equation*}
such that 
\begin{equation*}
P_{\beta}(\Omega_1, \Omega_2) = \left(\Theta_{\beta}(\Omega_1)\big|_{z \mapsto -z}, \Theta_{\beta}(\Omega_2)\right).
\end{equation*}
Note that the higher residue pairing $P$ has a description in terms of asymptotic expansions of integrals, explained in \cite{CoatesCortiIritani_hodge}, Section 6.2.
\begin{exm}\label{GiventalFormula} Suppose $X$ is toric Fano, with toric divisors $D_i$, $i = 1, \ldots, m$. Fix a toric fan $\Sigma$ for $X$. Then, the mirror map is trivial, and, in a fixed trivialisation of the mirror family, the LG potential is given by
\begin{equation*}
W = \sum^m_{i=1} a_i( \omega ) x^{v_i}, 
\end{equation*}
where $x_1, \ldots, x_n$ are torus coordinates, $v_i$ is the primitive generator of the ray of $\Sigma$ dual to $D_i$, and the $a_i$ are certain coefficients, uniquely determined, up to rescalings of the torus variables, by the condition that, for any integral linear relation
\begin{equation*}
\sum d_i v_i = 0,
\end{equation*}
corresponding to a unique curve class $[C]$ such that $D_i . [C] = d_i$, we have
\begin{equation*}
\prod_i a^{d_i}_i = e^{-2\pi \int_C\omega}. 
\end{equation*}
\end{exm}

\section{Toric weak Fano manifolds}
In this Section, after recalling Iritani's toric $\widehat{\Gamma}$-theorem, we provide the proofs of Theorem \ref{FanoThm} and Proposition \ref{LowerPhaseProp}.
\subsection{$\widehat{\Gamma}$-theorem}\label{GammaSec}
Let $X$ be a projective toric weak Fano manifold, with toric boundary $D = \sum^m_{i=1} D_i$. Fix a nef basis $p_1, \ldots, p_k$ of $\kk^{\vee}_{\Z} \cong H^2(X, \Z)$, with dual coordinates $q_1, \ldots, q_k$. Set
\begin{equation*}
q^{p/z} := e^{\sum^k_{i=1} p_i \log q_i/z}
\end{equation*}
and, for $d \in \kk_{\Z}$,
\begin{equation*}
q^d = \prod^k_{i = 1} q^{p_i \cdot d}_i.
\end{equation*}
Recall \emph{Givental's $I$-function}, with values in cohomology, is given by
\begin{equation*}
I(q, z) = \sum_{d \in A^{\vee}_X \cap \kk_{\Z}} q^{d + p/z} \prod^m_{i = 1}\frac{\prod^0_{j = -\infty}( D_i + j z)}{\prod^{D_i \cdot d}_{j = -\infty} (D_i + j z)},
\end{equation*}
where $A^{\vee}_X$ is the cone of curve classes. The relation of $I(q, z)$ with genus $0$ Gromov-Witten theory is discussed in \cite{Givental_toric}.

The \emph{Gamma class} in the toric case can be defined as
\begin{equation*}
\Gcl_X = \prod^m_{i = 1} \exp\left(-\gamma D_i + \sum^{\infty}_{k = 2} (-1)^k \frac{\zeta(k)}{k} D^k_i\right). 
\end{equation*}
\begin{thm}[Iritani \cite{Iritani_gamma}, \cite{Iritani_survey} Remark 11]\label{GammaThm} Let $X$ be a toric weak Fano manifold. Fix $z > 0$. Then, there exists an isomorphism 
\begin{equation*}
\Gamma\!: K^0(X) \xrightarrow{\sim} H_n(\cY_{q_k}, \{\Rea(W(k\omega_0)/z) \gg 0\}; \Z)
\end{equation*}
such that 
\begin{equation*}
\int_X \left(z^{c_1(X)} z^{\frac{\deg}{2}} I(q, -z)\right)\cup \Gcl_X (2\pi \ii)^{\frac{\deg}{2}} \ch(E) = \int_{\Gamma(E)} e^{-W_q/z} \Omega_0,
\end{equation*}
where $\deg \in \operatorname{End}(H^*(X, \C))$ denotes the grading operator.
\end{thm}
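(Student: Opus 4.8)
This is Iritani's $\widehat{\Gamma}$-theorem, so in practice one simply cites \cite{Iritani_gamma}; nonetheless, here is how I would reconstruct the argument, exploiting the toric structure throughout. \emph{Step 1: both sides are flat sections of the same $D$-module.} Fix $z > 0$ and regard each side as a multivalued function of the K\"ahler parameter $q$. The right-hand side is an oscillatory integral $\int_{\Gamma(E)} e^{-W_q/z}\Omega_0$ of the Landau-Ginzburg model over a rapid decay cycle; as $q$ varies, these periods span the flat sections of the Gauss-Manin connection on $\GM(W)$. The left-hand side is assembled from Givental's $I$-function, which for weak Fano $X$ is a solution of the quantum differential equation after applying the (analytic, near the large-volume limit) mirror map. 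The first step is therefore Givental's toric mirror theorem \cite{Givental_toric} together with the linear mirror isomorphism $\Theta$ recalled in Section \ref{MirrorSec}, which intertwines $\nabla^{\GM}$ with the quantum connection and hence identifies the two $D$-modules. This reduces the claimed identity to matching two distinguished \emph{lattices} of flat sections.

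\emph{Step 2: the Gamma class from Stirling asymptotics.} The conceptual heart is the asymptotic analysis at $q \to 0$. On the B-side, the oscillatory integral over a Lefschetz thimble localizes near a nondegenerate critical point of $W_q$, and its stationary-phase expansion in $z$ produces the exponentiated critical value times a Hessian factor. On the A-side, the hypergeometric factors $\prod_j (D_i + jz)^{\pm1}$ in $I(q,z)$ are ratios of shifted Gamma-function values, whose asymptotics via the log-Gamma expansion $\log\Gamma(1+s) = -\gamma s + \sum_{k \ge 2}(-1)^k \frac{\zeta(k)}{k} s^k$ reproduce precisely the factors $\exp\!\big(-\gamma D_i + \sum_{k\ge2}(-1)^k \frac{\zeta(k)}{k} D_i^k\big)$ defining $\Gcl_X$. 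Thus the $\zeta$-values in the Gamma class are forced by the asymptotics of the Gamma functions appearing in the $I$-function, while the prefactors $z^{c_1(X)} z^{\deg/2}$ and $(2\pi\ii)^{\deg/2}$ record homogeneity under the grading operator and the framing of the integration cycle.

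\emph{Step 3: integrality and the isomorphism.} It remains to promote this matching over $\C$ to the statement that $E \mapsto \Gamma(E)$ is a $\Z$-module isomorphism onto the rapid decay homology $H_n(\cY_{q_k}, \{\Rea(W_q/z) \gg 0\}; \Z)$. In the toric case the rapid decay cycles are relative Lefschetz thimbles attached to the critical points of $W_q$, and their monodromy and Stokes structure is dictated by the combinatorics of the fan; this matches the large-volume and Seidel monodromies on quantum cohomology under which $\ch(E)$ transforms compatibly. One then verifies the central-charge identity on a generating set of $K^0(X)$ — for instance the classes of the line bundles $\olo(D_i)$, or of structure sheaves of toric strata — using the explicit $W_q$ from Example \ref{GiventalFormula}, and extends to all of $K^0(X)$ by the $D$-module structure.

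I expect the genuine difficulty to lie entirely in Step 3: matching the \emph{integral} lattices, rather than merely the $\C$-spans of flat sections, requires the full Stokes/monodromy analysis together with the integrality of the Gamma-integral structure. By contrast, once the $D$-module identification of Step 1 is in place, the emergence of $\Gcl_X$ in Step 2 is essentially a formal Stirling computation.
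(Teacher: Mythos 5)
The paper never proves this statement: it is imported as Iritani's theorem, with the citation to \cite{Iritani_gamma} (and \cite{Iritani_survey}, Remark 11) serving as the proof, and your opening line does exactly the same, so your proposal matches the paper's ``approach'' of treating this as a black box. Your three-step reconstruction is moreover a fair outline of Iritani's actual argument --- mirror-theorem identification of the two $D$-modules, Gamma functions emerging from the evaluation of the oscillatory integral (the paper itself recalls the anchor case $E = \olo_X$, where $\int_{(\R_{>0})^n} e^{-W_q/z}\Omega_0 \sim \int_X q^{-p} z^{c_1(X)} \cup \Gcl_X$, in the remark following its use of the theorem), and matching of integral lattices via thimbles and the monodromy/Galois action on a generating set of $K^0(X)$ --- modulo the small quibble that the expansion $\log\Gamma(1+s) = -\gamma s + \sum_{k\ge 2}(-1)^k\frac{\zeta(k)}{k}s^k$ you invoke is the Taylor series at $s=0$ (exact, indeed a finite sum on nilpotent divisor classes), not Stirling asymptotics.
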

\begin{rmk} In this statement, for a fixed value of the parameter $z >0$, cycles in $H_n(\cY_{q_k}, \{\Rea(W(k\omega_0)/z) \gg 0\}; \Z)$ are normalised by a factor $(-2\pi z)^{-n/2}$, see \cite{Iritani_survey}, footnote 3. 
\end{rmk}
\subsection{Proof of Theorem \ref{FanoThm}}\label{FanoThmSec}
In this Section we mix additive and multiplicative notation for line bundles, and often suppress the tensor product in the notation. The proof involves naturally a parameter $z > 0$; the statement of Theorem \ref{FanoThm} is obtained by specialising $z = 1$.

Recall that the $\widehat{\Gamma}$-theorem (Theorem \ref{GammaThm}) involves the integral
\begin{equation}\label{GammaIntegral}
\int_X z^{c_1(X)} z^{\frac{\deg}{2}} I(q, -z)\cup \Gcl_X (2\pi \ii)^{\frac{\deg}{2}} \ch(E). 
\end{equation}
For our applications, we choose coordinates $q_i$ such that 
\begin{equation*}
\sum^k_{i=1} p_i \log q_i = 2\pi \ii [\beta_0 + \ii \omega_0],
\end{equation*}
so that, for $d \in \kk_{\Z}$, we have
\begin{equation*}
q^{d - p/z} = \prod^k_{i = 1} q^{p_i \cdot d}_i e^{-2\pi \ii [\beta_0 + \ii \omega_0]/z},
\end{equation*}
and the integral \eqref{GammaIntegral} can be expressed as
\begin{equation}\label{ExplicitGammaIntegral}
\int_X z^{c_1(X)} z^{\frac{\deg}{2}} \left(e^{-2\pi \ii [\beta_0 + \ii \omega_0]/z} \sum_{d \in A^{\vee}_X \cap \kk_{\Z}} q^{d} \prod^m_{i = 1}\frac{\prod^0_{j = -\infty}( D_i - j z)}{\prod^{D_i \cdot d}_{j = -\infty} (D_i - j z)}\right)\cup \Gcl_X (2\pi \ii)^{\frac{\deg}{2}} \ch(E).
\end{equation}

Fix a (large) positive integer $k$. Let us consider the integral \eqref{ExplicitGammaIntegral} with respect to the rescaled K\"ahler class $k [\omega_0]$, with vanishing $B$-field $[\beta_0] = 0$, and when $E = L^{k}$ is a power of a line bundle, namely
\begin{equation}\label{ScaledGammaIntegral} 
\int_X \left(z^{c_1(X)} z^{\frac{\deg}{2}} e^{2\pi k [\omega_0]/z} \sum_{d \in A^{\vee}_X \cap \kk_{\Z}} q^{k d} \prod^m_{i = 1}\frac{\prod^0_{j = -\infty}( D_i - j z)}{\prod^{D_i \cdot d}_{j = -\infty} (D_i - j z)}\right)\cup \Gcl_X (2\pi \ii)^{\frac{\deg}{2}} \ch(L^{k}).
\end{equation} 
Then, we have
\begin{equation*}
\sum_{d \in A^{\vee}_X \cap \kk_{\Z}} q^{k d} \prod^m_{i = 1}\frac{\prod^0_{j = -\infty}( D_i - j z)}{\prod^{D_i \cdot d}_{j = -\infty} (D_i - j z)} = 1 + O(k^{-1})
\end{equation*}
and \eqref{ScaledGammaIntegral} admits an expansion of the form
\begin{align*}  
& k^n \int_X e^{ 2\pi  [\omega_0]} (2\pi \ii)^{\frac{\deg}{2}}\ch(L) + O(k^{n-1})\\
&  = (2\pi \ii)^n k^n \int_X e^{2\pi [\omega_0]} \left(\frac{1}{2\pi \ii}\right)^{n-\frac{\deg}{2}}\ch(L) + O(k^{n-1})\\
&  = (2\pi \ii)^n k^n \int_X e^{\frac{2\pi}{2\pi \ii} [\omega_0]} \ch(L) + O(k^{n-1})\\
&  = (2\pi \ii)^n k^n \int_X e^{-\ii [\omega_0]} \ch(L) + O(k^{n-1}).
\end{align*}
Then, Theorem \ref{GammaThm} gives
\begin{equation}\label{AsympX}
\int_X e^{- \ii [\omega_0]} \ch(L) = (2\pi \ii)^{-n} k^{-n} \int_{\Gamma(L^{\otimes k})} e^{-W(k \omega_0)/z} \Omega_0 + O(k^{-1}).
\end{equation}
\begin{rmk}\label{AsympZRmk} Since \eqref{ScaledGammaIntegral} is of the form
\begin{equation*} 
\int_X e^{- \ii k [\omega_0]} \ch(L^{\otimes k})\,(1+O(k^{-1})),
\end{equation*}
\eqref{AsympX} also shows
\begin{equation*} 
\int_X e^{- \ii k [\omega_0]} \ch(L^{\otimes k}) = \frac{1}{(2\pi \ii)^n} \int_{\Gamma(L^{\otimes k})} e^{-W(k \omega_0)/z} \Omega_0\,(1 + O(k^{-1})).
\end{equation*}
\end{rmk}
\begin{rmk} In the special case when $L \cong \olo_X$, this is the leading term in the full asymptotic expansion
\begin{equation*}
\int_{(\R_{>0})^n} e^{-W(q)/z} \Omega_0 \sim \int_X q^{-p} z^{c_1(X)} \cup \Gcl_X
\end{equation*}
observed in \cite{Iritani_survey}, Section 7, equation (8).
\end{rmk}
\subsubsection{Divisors}\label{DivisorsSec}
Suppose that $V \subset X$ is a toric divisor. Consider the integral 
\begin{equation}\label{ScaledGammaIntegralV} 
\int_X  z^{c_1(X)} z^{\frac{\deg}{2}} \left(e^{- \ii k [\omega_0]/z}\sum_{d \in A^{\vee}_X \cap \kk_{\Z}} q^{k d} \prod^m_{i = 1}\frac{\prod^0_{j = -\infty}( D_i - j z)}{\prod^{D_i \cdot d}_{j = -\infty} (D_i - j z)}\right)\cup \Gcl_X (2\pi \ii)^{\frac{\deg}{2}} \ch(L^{k}(k_V V)),
\end{equation} 
where $k \gg k_V \gg 1$ are positive integers. Arguing as above, we see that \eqref{ScaledGammaIntegralV} admits an expansion of the form
\begin{align*}  
&(2\pi \ii )^{n} k^n \int_X e^{- \ii [\omega_0]} \ch(L)\\& + (2\pi \ii )^{n} k^{n-1} k_V \int_X e^{-\ii [\omega_0]} \ch(L) \cup c_1(\olo(V)) + O(k^{n-1}, k^0_V).
\end{align*}
At the same time, by Theorem \ref{GammaThm}, the integral \eqref{ScaledGammaIntegralV} equals
\begin{equation*}
\int_{\Gamma( L^{ k}(k_V V) )} e^{-W(k \omega_0)/z} \Omega_0 = \int_{\Gamma(L^{k})} e^{-W(k \omega_0)/z} \Omega_0 + \int_{\Gamma(\cS(k L, k_V V))} e^{-W(k \omega_0)/z} \Omega_0,
\end{equation*}
where we set 
\begin{equation*}
\cS(k L, k_V V) := L^{k}(k_V V)\otimes \olo_{k_V V},
\end{equation*}
and we use the exact sequence
\begin{equation}\label{DivisorExactSeq}
0 \to L^{k} \to L^{k}(k_V V) \to \cS(k L , k_V V) \to 0,
\end{equation}
together with the fact that $\Gamma$ is an isomorphism on $K$-theory. Combining this with \eqref{AsympX} shows
\begin{align}\label{AsympV}
\nonumber&\int_V e^{- \ii  [\omega_0]} \ch(L)\\
\nonumber&=\int_X e^{- \ii  [\omega_0]} \ch(L) \cup c_1(\olo(V))\\
& = (2\pi \ii)^{-n} k^{- n + 1} k^{-1}_V \int_{\Gamma(\cS(k L, k_V V))} e^{-W(k \omega_0)/z} \Omega_0 + O(k^{-1}_V).
\end{align}

Then, the above argument, in particular \eqref{AsympX} and \eqref{AsympV}, show that the phase inequality \eqref{PhaseIneq} (which is equivalent to dHYM positivity \eqref{dHYMPosIntro}), namely
\begin{equation}\label{PhaseIneqDivisor} 
\arg\left((-1)^{\codim V} \int_V e^{-\ii \omega_0}\ch(L)\right) < \arg\left(\int_X e^{-\ii \omega_0}\ch(L)\right),
\end{equation}
implies
\begin{equation}\label{PeriodIneqDivisor}
\arg\left((-1)^{\codim V}\int_{\Gamma(\cS(k L, k_V V))} e^{-W(k \omega_0)/z} \Omega_0 \right) <  \arg \int_{\Gamma(L^{\otimes k})} e^{-W(k \omega_0)/z} \Omega_0,
\end{equation}
for all $k \gg k_V \gg 1$. 

Conversely, if we assume the inequality \eqref{PeriodIneqDivisor}, then we get the inequality of \emph{leading order terms} in the expansion for $k \gg k_V \gg 1$, 
\begin{equation*}  
\arg\left((-1)^{\codim V} \int_V e^{-\ii \omega_0}\ch(L)\right) \leq \arg\left(\int_X e^{-\ii \omega_0}\ch(L)\right).
\end{equation*}

In general, the inequality may not be strict. However, this is ruled out if we suppose that $[\omega_0]$ is \emph{generic with respect to $V$} in the sense that it does not lie in the proper analytic subvariety of $H^{1, 1}(X, \R)$ determined by $c_1(L)$ and $V$ through the condition
\begin{equation*}  
\arg\left((-1)^{\codim V} \int_V e^{-\ii \omega_0}\ch(L)\right) = \arg\left(\int_X e^{-\ii \omega_0}\ch(L)\right),
\end{equation*}
which, under our assumptions, is precisely the condition that $V$ is strictly semi-stabilising in the Nakai-Moishezon criterion, 
\begin{equation}\label{SemiStabCondition}
\int_V \Rea(\ii \omega_0 + c_1(L^{\vee}))^{\dim V} - \cot(\varphi)\Imm(\ii \omega_0 +  c_1(L^{\vee}))^{\dim V} = 0.
\end{equation}

The argument above holds for a fixed toric divisor $V$. However, since there are only finitely many toric divisors, it is clear that the parameters $k \gg k_1 \gg 1$ appearing in \eqref{PeriodIneqDivisor} can be chosen uniformly over all $V$. Moreover, $k \gg k_1 \gg 1$ can be chosen \emph{uniformly} over all $L$ whose $K$-theory class lies in a bounded subset of $K^0(X)$ (in any norm). 

Then the equivalence between the conditions \eqref{PhaseIneqDivisor} and \eqref{PeriodIneqDivisor} holds as long as $[\omega_0]$ is \emph{generic with respect to all divisors} in the sense that it does not lie in the union of finitely many proper analytic subvarieties of $H^{1, 1}(X, \R)$ determined by $c_1(L)$ and the classes of all toric divisors.

Finally, we note that the exact sequence \eqref{DivisorExactSeq} induces the boundary morphism in $D^b(X)$
\begin{equation*}
\cS(k L, k_V V) \to L^{\otimes k}[1],   
\end{equation*}
identified with the map of complexes
\begin{equation*}
\begin{tikzcd}
     L^{\otimes k} \arrow{r}\arrow{d} & L^{\otimes k }(k_V V)\arrow{d}\\
     L^{\otimes k} \arrow{r} & 0.  
\end{tikzcd}
\end{equation*}
Thus, the phase inequalities \eqref{PhaseIneqDivisor} and \eqref{PeriodIneqDivisor} corresponds to a morphism 
\begin{equation*}
\cS(k L, k_V V)[-1] \to L^{\otimes k}.
\end{equation*} 
\begin{rmk}\label{CentralChargeRmk} As observed in \cite{CollinsYau_dHYM_momentmap}, Section 8, there is a basic difficulty in comparing dHYM positivity \eqref{dHYMPosIntro} with algebro-geometric stability conditions, due to the fact that, in general,
\begin{equation*}
 \int_V e^{-\ii \omega_0}\ch(L) \neq \int_X e^{-\ii \omega_0}\ch(L \otimes \olo_V).
\end{equation*}
From our current viewpoint, this is replaced by the approximate equality
\begin{align*}
\int_V e^{-\ii \omega_0}\ch(L) = k^{- n + 1} k^{-1}_V \int_X e^{-\ii k \omega_0}\ch(\cS(k L, k_V V)) + O(k^{-1}_V),
\end{align*}
which follows from our computations above. Thus, the phase inequality \eqref{PhaseIneq} (which is equivalent to dHYM positivity \eqref{dHYMPosIntro}), namely
\begin{equation*}  
\arg\left((-1)^{\codim V} \int_V e^{-\ii \omega_0}\ch(L)\right) < \arg\left(\int_X e^{-\ii \omega_0}\ch(L)\right),
\end{equation*}
can be expressed as
\begin{equation*}
\arg\left( (-1)^{\codim V}\int_{X} e^{-\ii k\omega_0}\ch(\cS(k L, k_V V))\right)< \arg \int_{X} e^{-\ii k\omega_0}\ch(L^{\otimes k}), 
\end{equation*}
where we have a morphism $\cS(k L, k_V V)[-\codim V] \to L^{\otimes k}$ in $D^b(X)$, provided that $[\omega_0]$ is \emph{generic with respect to all divisors} in the sense that it does not lie in the union of finitely many proper analytic subvarieties of $H^{1, 1}(X, \R)$ determined by $c_1(L)$ and the classes of all toric divisors through the conditions \eqref{SemiStabCondition} (as $V$ varies). 

We will use this fact in the proof of Theorem \ref{BstabThm}, concerning Bridgeland stability conditions for Lagrangian sections, when $X$ is a toric weak del Pezzo surface.
\end{rmk}
\subsubsection{Complete intersections}
The general case when the toric submanifold $V$ is a complete intersection of toric divisors can be obtained by induction. We illustrate first the case 
\begin{equation*}
V := V_1 \pitchfork V_2 
\end{equation*}
for the sake of clarity. 

We apply the identity \eqref{AsympV}, replacing the line bundle $L$ with $L^{k}(k_1 V_1)$, the divisor $V$ with $V_2$, and the K\"ahler form $\omega_0$ with $k \omega_0$. This gives
\begin{align*} 
\nonumber&\int_{V_2} e^{-  \ii   k [\omega_0]} \ch(L^{k}(k_1 V_1))\\
\nonumber&=\int_{X} e^{- \ii   k [\omega_0]} \ch(L^{k}(k_1 V_1))\cup c_1(\olo(V_2))\\
& = (2\pi \ii)^{-n} \tilde{k}^{- n + 1} k^{-1}_2 \int_{\Gamma(\cS(\tilde{k}L^{k}(k_1 V_1), k_2 V_2))} e^{-W(k \tilde{k} \omega_0)/z} \Omega_0 + O(k^{-1}_2).
\end{align*} 
At the same time, we have an expansion
\begin{align*}
& \int_{V_2} e^{- \ii   k [\omega_0]} \ch(L^{k}(k_1 V_1))\\
& = k^{n-1} \int_{V_2} e^{- \ii   [\omega_0]} \ch(L) + k^{n-2} k_1 \int_{V_2} e^{- \ii z [\omega_0]} \ch(L) \cup c_1(V_1) + O(k^{n-2}, k^0_1)\\
& = k^{n-1} \int_{V_2} e^{- \ii   [\omega_0]} \ch(L) + k^{n-2} k_1 \int_{V_1 \pitchfork V_2} e^{- \ii z [\omega_0]} \ch(L)  + O(k^{n-2}, k^0_1). 
\end{align*}
Using \eqref{AsympV} once again shows that, for positive integers $h$, we have 
\begin{align*} 
\nonumber&\int_{V_2} e^{- \ii   [\omega_0]} \ch(L)\\
& = (2\pi \ii)^{-n} (k \tilde{k})^{- n + 1} h^{-1} \int_{\Gamma(Q( k \tilde{k} L, h V_2))} e^{-W((k \tilde{k})\omega_0)/z} \Omega_0 + O(h^{-1}),
\end{align*}
which gives
\begin{align*}
\int_{V_2} e^{- \ii   k [\omega_0]} \ch(L^{k}(k_1 V_1)) &= (2\pi \ii)^{-n} \tilde{k}^{- n + 1} h^{-1} \int_{\Gamma(Q( k \tilde{k} L, h V_2))} e^{-W((k \tilde{k})\omega_0)/z} \Omega_0\\ 
&+ k^{n-2} k_1 \int_{V_1 \pitchfork V_2} e^{- \ii   [\omega_0]} \ch(L)  + O(k^{n-2}, k^0_1) + O(h^{-1}). 
\end{align*}
So we find
\begin{align}
\nonumber &\int_{V_1 \pitchfork V_2} e^{- \ii   [\omega_0]} \ch(L)\\
\nonumber &= (2\pi \ii)^{-n} k^{-n+2} \tilde{k}^{- n + 1} k^{-1}_1 k^{-1}_2 \int_{\Gamma(Q(\tilde{k}L^{k}(k_1 V_1), k_2 V_2))} e^{-W(k \tilde{k} \omega_0)/z} \Omega_0\\
\nonumber & - (2\pi \ii)^{-n} k^{-n+2}  \tilde{k}^{- n + 1} k^{-1}_1 h^{-1} \int_{\Gamma(Q( k \tilde{k} L, h V_2))} e^{-W((k \tilde{k})\omega_0)/z} \Omega_0 \\
&  + O(k^{-n+ 2} k^{-1}_1 h^{-1}) + O(k^{-n+2} k^{-1}_1 k^{-1}_2) + O( k^{-1}_1).
\end{align}\label{AsympManyVMessy}
We regard $\cS(\tilde{k}L^{k}(k_1 V_1), k_2 V_2)$, $\cS( k \tilde{k} L, h V_2))$ as elements in $D^b(X)$, isomorphic to the complexes
\begin{align*}
&\cS(\tilde{k}L^{k}(k_1 V_1), k_2 V_2) \cong [L^{ k \tilde{k}}(\tilde{k} k_1 V_1) \to L^{  k \tilde{k}}(\tilde{k} k_1 V_1 + k_2 V_2)],\\ 
&\cS( k \tilde{k} L, h V_2)) \cong [L^{ k \tilde{k}} \to L^{ k \tilde{k}}(h V_2)]. 
\end{align*}
Making the natural choice
\begin{equation*}
h = k_2
\end{equation*}
yields a morphism $\cS( k \tilde{k} L, k_2 V_2)) \to \cS(\tilde{k}L^{k}(k_1 V_1), k_2 V_2)$ induced by the map of complexes
\begin{equation*}
\begin{tikzcd}
    L^{\otimes k \tilde{k}} \arrow{r} \arrow{d} & L^{\otimes k \tilde{k}}(k_2 V_2) \arrow{d}  \\
    L^{\otimes k \tilde{k}}(\tilde{k} k_1 V_1) \arrow{r} & L^{\otimes k \tilde{k}}(\tilde{k} k_1 V_1 + k_2 V_2).
  \end{tikzcd}
\end{equation*}
We define 
\begin{equation*}
\cS(k \tilde{k} L, \tilde{k} L^{k}(k_1 V_1), k_2 V_2) \in D^b(X)
\end{equation*}
as the cone
\begin{equation*}
\cS( k \tilde{k} L, k_2 V_2)) \to \cS(\tilde{k} L^{k}(k_1 V_1), k_2 V_2)) \to \cS(k \tilde{k} L, \tilde{k} L^{k}(k_1 V_1), k_2 V_2) \to \cS( k \tilde{k} L, k_2 V_2))[1],
\end{equation*}
and we have in $K$-theory
\begin{equation*}
[\cS(k \tilde{k} L, \tilde{k} L^{k}(k_1 V_1), k_2 V_2)] = [\cS(\tilde{k} L^{k}(k_1 V_1), k_2 V_2))] - [\cS( k \tilde{k} L, k_2 V_2))].
\end{equation*}
With our choice $h = k_2$, and setting
\begin{equation*}
\hat{h} := k \tilde{k},
\end{equation*}
we can rewrite the expansion \eqref{AsympManyVMessy} as
\begin{align}\label{AsympManyV} 
\nonumber &\int_{V_1 \pitchfork V_2} e^{- \ii   [\omega_0]} \ch(L)\\
\nonumber &= (2\pi \ii)^{-n} k {\hat{k}}^{-n+1} k^{-1}_1 k^{-1}_2 \int_{\Gamma(\cS(\hat{k} L, k^{-1}\hat{k} L^{k}(k_1 V_1), k_2 V_2))} e^{-W(\hat{k} \omega_0)/z} \Omega_0\\ 
&  + O(k^{-n+ 2} k^{-1}_1 k^{-1}_2) + O( k^{-1}_1).
\end{align}

Suppose that $[\omega_0]$ is \emph{generic with respect to  $V_1$, $V_2$} in the sense that it does not lie in the uion of finitely many proper analytic subvarieties of $H^{1, 1}(X, \R)$ determined by $c_1(L)$, $V_1$, $V_2$.

Then, by \eqref{AsympX}, the phase inequality \eqref{PhaseIneq} in this case  
\begin{equation*}
\arg\left((-1)^{ \codim V_1 \pitchfork V_2 } \int_{V_1 \pitchfork V_2} e^{- \ii   [\omega_0]} \ch(L)\right) < \arg \int_{X} e^{- \ii   [\omega_0]} \ch(L) 
\end{equation*}
is equivalent to 
\begin{equation*}
\arg\left((-1)^{ \codim V_1 \pitchfork V_2 }\int_{\Gamma(\cS(\hat{k} L, k^{-1}\hat{k} L^{k}(k_1 V_1), k_2 V_2))} e^{-W(k \tilde{k} \omega_0)/z} \Omega_0\right) < \int_{\Gamma(L^{\otimes  k \tilde{k} })} e^{-W(k \tilde{k} \omega_0)/z} \Omega_0. 
\end{equation*}
We can compose the (shifted) boundary morphisms
\begin{equation*}
\cS(\hat{k} L, k^{-1}\hat{k} L^{k}(k_1 V_1), k_2 V_2) \to \cS( \hat{k} L, k_2 V_2))[1]
\end{equation*}
and
\begin{equation*}
\cS( \hat{k} L, k_2 V_2))[1] \to L^{\otimes \hat{k}}[2]
\end{equation*}
to obtain a morphism
\begin{equation*}
\cS(\hat{k} L, k^{-1}\hat{k} L^{k}(k_1 V_1), k_2 V_2) \to L^{\otimes \hat{k}}[2].
\end{equation*}

The upshot of our discussion it that, for all sufficiently large $k$, independent of the choice of toric divisors $V$, $V_1$, $V_2$, there exist cycles 
\begin{equation*}
\Gamma_V,\,\Gamma_{V_1 \pitchfork V_2} \in H_n(\cY_{q_k}, \{\Rea(W(k\omega_0)/z) \gg 0\}; \Z)
\end{equation*}
such that dHYM positivity \eqref{dHYMPosIntro} with respect to all $V$ and $V_1 \pitchfork V_2$ is equivalent to
\begin{align*}
& \arg\left((-1)^{ \codim V }\int_{\Gamma_V} e^{-W(k \omega_0)/z} \Omega_0\right) < \int_{\Gamma(L^{\otimes  k})} e^{-W(k \omega_0)/z} \Omega_0,\\
& \arg\left((-1)^{ \codim V_1 \pitchfork V_2 } \int_{\Gamma(V_1\pitchfork V_2)} e^{-W(k \omega_0)/z} \Omega_0 \right) < \int_{\Gamma(L^{\otimes  k})} e^{-W(k \omega_0)/z} \Omega_0. 
\end{align*}
Note that we chose $k$ larger to include intersections $V_1 \pitchfork V_2$. Moreover, by construction, we have
\begin{equation*}
\Gamma_V = \Gamma(\cS_V),\,\Gamma_{V_1 \pitchfork V_2} = \Gamma(\cS_{V_1\pitchfork V_2})
\end{equation*} 
for suitable objects 
\begin{equation*}
\cS_V,\, \cS_{V_1\pitchfork V_2} \in D^b(X),
\end{equation*}
with natural morphisms
\begin{equation*}
 \cS_V[-1] \to L^{\otimes k},\, \cS_{V_1\pitchfork V_2}[-2] \to L^{\otimes k}.
\end{equation*}
It is also clear that $k$ can also be chosen uniformly as long as $[L]$ varies in a bounded set on $K^0(X)$. 

Finally, for the induction step, we note that if $V_1 \pitchfork V_2 \pitchfork V_3$ is a complete intersection of toric divisors, then, for all $k, k_i > 0$, there is a natural morphism
\begin{align*}
\cS(k\hat{k} L, \hat{k} L^{k}(k_1 V_1), k_2 V_2) \to \cS(\hat{k} L^{k}(k_1 V_1), \hat{k} L^{k}(k_1 V_1)(k^{-1} k_2 V_2), k_3 V_3)
\end{align*}
induced by the diagram
\begin{equation*}
\begin{tikzcd}
    \cS( k \hat{k} L, k_2 V_2)) \arrow{r} \arrow{d} & \cS(\hat{k} L^{k}(k_1 V_1), k_2 V_2)) \arrow{d}  \\
    \cS(\hat{k} L^{k}(k_1 V_1), k_3 V_3) \arrow{r} & \cS(\hat{k} L^{k}(k_1 V_1)(k^{-1} k_2 V_2), k_3 V_3) .
  \end{tikzcd}
\end{equation*}
\subsubsection{Induction}
Consider a general complete intersection of toric divisors 
\begin{equation*}
V = V_1 \pitchfork \cdots\pitchfork V_r.
\end{equation*}
We can assume inductively, starting from \eqref{AsympV} (or \eqref{AsympManyV}), that there exists an object 
\begin{align*}
& \cS(\hat{k} L^{k}(k_1 V_1), k^{-1}\hat{k} (L^{k}(k_1 V_1))^{k}(k_2 V_2), \ldots, k_r V_r) \\
& = \cS(\hat{k} L^{k}(k_1 V_1), \hat{k} L^{k}(k_1 V_1)(k^{-1} k_2 V_2), \ldots, k_r V_r) \in D^b(X)
\end{align*}
such that we have an expansion
\begin{align*} 
\nonumber &\int_{V_2 \pitchfork \cdots \pitchfork V_r} e^{- \ii   k [\omega_0]} \ch(L^{k}(k_1 V_1))\\
\nonumber &= (2\pi \ii)^{-n} k^{r-1} {\hat{k}}^{-n+1} \prod^r_{i = 2} k_i \int_{\Gamma(\cS(\hat{k} L^{k}(k_1 V_1), \hat{k} L^{k}(k_1 V_1)(k^{-1} k_2 V_2), \ldots, k_r V_r))} e^{-W(k \hat{k} \omega_0)/z} \Omega_0\\ 
&  + O(k^{-n+ r - 1}, k^{-1}_2, \ldots, k^{-1}_r).
\end{align*}
At the same time, we have
\begin{align*}
& \int_{V_2 \pitchfork \cdots \pitchfork V_r} e^{- \ii   k [\omega_0]} \ch(L^{k}(k_1 V_1))\\
& = k^{n-r + 1} \int_{V_2 \pitchfork \cdots \pitchfork V_r} e^{- \ii   [\omega_0]} \ch(L) + k^{n- r} k_1 \int_{V_2 \pitchfork \cdots \pitchfork V_r} e^{- \ii   [\omega_0]} \ch(L) \cup c_1(V_1) + O(k^{n-r}, k^0_1)\\
& = k^{n-r + 1} \int_{V_2 \pitchfork \cdots \pitchfork V_r} e^{- \ii   [\omega_0]} \ch(L) + k^{n- r} k_1 \int_{V_1 \pitchfork \cdots \pitchfork V_r} e^{- \ii   [\omega_0]} \ch(L) + O(k^{n-r}, k^0_1). 
\end{align*}
By induction from \eqref{AsympV} or \eqref{AsympManyV}, we also know that there exists 
\begin{align*}
& \cS(k\hat{k} L, \hat{k} L^{k}(k_2 V_2), \ldots, k_r V_r) \in D^b(X)
\end{align*}
such that
\begin{align*} 
\int_{V_2 \pitchfork \cdots \pitchfork V_r} e^{- \ii   [\omega_0]} \ch(L) &= (2\pi \ii)^{-n} k^{r-1} {\hat{k}}^{-n+1} \prod^r_{i = 2} k_i \int_{\Gamma(\cS(k\hat{k} L, \hat{k} L^{k}(k_2 V_2), \ldots, k_r V_r))} e^{-W(k\hat{k} \omega_0)/z} \Omega_0\\ 
&  + O(k^{-n+ r - 1}, k^{-1}_2,\ldots, k^{-1}_r).
\end{align*}
Moreover, inductively, we know that there is a morphism in $D^b(X)$
\begin{align*}
\cS(k\hat{k} L, \hat{k} L^{k}(k_2 V_2), \ldots, k_r V_r) \to \cS(\hat{k} L^{k}(k_1 V_1), \hat{k} L^{k}(k_1 V_1)(k^{-1} k_2 V_2), \ldots, k_r V_r)
\end{align*}
and we can define
\begin{align*}
&\cS_{V_1 \pitchfork \cdots \pitchfork V_r} := \cS(k\hat{k} L, \hat{k} L^{k}(k_1 V_1), \hat{k} L^{k}(k_1 V_1)(k^{-1} k_2 V_2), \ldots, k_r V_r)\in D^b(X)
\end{align*}
as its cone. Then, we find 
\begin{align*}
\int_{V_1 \pitchfork \cdots \pitchfork V_r} e^{- \ii   [\omega_0]} \ch(L) &= - (2\pi \ii)^{-n} k^{r } {\hat{k}}^{-n+1} \prod^r_{i = 1} k_i \int_{\Gamma(\cS_{V_1 \pitchfork \cdots \pitchfork V_r})} e^{-W(k\hat{k} \omega_0)/z} \Omega_0\\
&+O(k^{-n + r}, k^{-1}_1, \ldots, k^{-1}_r).
\end{align*}
The boundary map  
\begin{equation*}
\cS_{V_1 \pitchfork \cdots \pitchfork V_r} \to \cS(k\hat{k} L, \hat{k} L^{k}(k_2 V_2), \ldots, k_r V_r)[1] 
\end{equation*}
can be composed with the inductively given morphism
\begin{equation*}
\cS(k\hat{k} L, \hat{k} L^{k}(k_2 V_2), \ldots, k_r V_r)[1] \to L^{\otimes k\hat{k}}[r]
\end{equation*}
in order to obtain
\begin{equation*}
\cS_{V_1 \pitchfork \cdots \pitchfork V_r} \to  L^{\otimes k\hat{k}}[r].
\end{equation*}

Suppose that $[\omega_0]$ is \emph{generic} in the sense that it does not lie in the union of finitely many proper analytic subvarieties of $H^{1, 1}(X, \R)$ determined by $c_1(L)$ and the classes of all toric submanifolds.

We can then choose our constants
\begin{equation*}
k,\, \hat{k} \gg k_1, \ldots, k_r \gg 1
\end{equation*}
\emph{uniformly}, so that, setting $\cS_V := \cS_{V_1 \pitchfork \cdots \pitchfork V_r}$, the phase inequality \eqref{PhaseIneq} is equivalent to 
\begin{equation*}
\arg\left((-1)^{ \codim V } \int_{\Gamma(\cS_{V})} e^{-W(k\hat{k} \omega_0)/z} \Omega_0 \right) < \arg \int_{\Gamma(L^{\otimes  k \hat{k}})} e^{-W(k \hat{k}\omega_0)/z} \Omega_0,
\end{equation*}
corresponding to the morphism
\begin{equation*}
\cS_{V}[-\codim V] \to  L^{\otimes k\hat{k}}.
\end{equation*}
\section{Relation to Bridgeland stability on surfaces}\label{BSec}
In this Section we study Theorem \ref{FanoThm} and its Corollaries \ref{FSCor}, \ref{ThomasYauCor} from the viewpoint of Bridgeland stability, when $X$ is a weak del Pezzo surface. In particular we prove Theorem \ref{BstabThm}.

Fix a K\"ahler class $[\omega_0]$. There is well-known Bridgeland stability condition $\sigma_{[\omega_0]}$ on $D^b(X)$, 
\begin{equation*}
\sigma_{[\omega_0]} = \left(\Coh^{\sharp}(X), Z_{[\omega_0]}\right),
\end{equation*}
with central charge given by 
\begin{equation*}
Z_{[\omega_0]}(E) = -\int_X e^{-\ii [\omega_0]}\ch(E), 
\end{equation*} 
for $E \in \Coh(X)$ (see e.g. \cite{Collins_stability}, Section 4). 

Let us denote by $\mu_{[\omega_0]}(E)$ the Mumford-Takemoto slope of a coherent sheaf. Then $\sigma_{[\omega_0]}$ is supported on the heart $\Coh^{\sharp}(X)$, depending on $[\omega_0]$, which is the tilting of $\Coh(X)$ at the torsion pair $(\Coh^{>0}(X), \Coh^{\leq 0}(X))$, where $\Coh^{>0}(X)$ is generated (by extensions) by slope semistable sheaves of slope $\mu_{[\omega_0]} > 0$, and by zero- or one-dimensional torsion sheaves, while $\Coh^{\leq 0}(X)$ is generated by semistable sheaves of slope $\mu_{[\omega_0]} \leq 0$. 

It follows that $\Coh^{\sharp}(X)$ has a cohomological description
\begin{align*}
&\Coh^{\sharp}(X)\\
& = \{E \in D^b(\Coh(X))\,:\, H^{0}(E) \in \Coh^{> 0},\,H^{-1}(E) \in \Coh^{\leq 0},\,H^{i}(E) = 0, i \neq -1, 0\}.
\end{align*}

Suppose that we are in the situation of Theorem \ref{FanoThm}, on a weak del Pezzo $X$. In the proof of Theorem \ref{FanoThm}, Section \ref{DivisorsSec} and Remark \ref{CentralChargeRmk}, we constructed objects $\cS(k L, k_V V) \in D^b(X)$, with morphisms 
\begin{equation}\label{BoundaryMorDiv}
\cS(k L, k_V V) \to L^{k}[1],
\end{equation}
where $V$ ranges through toric divisors of $X$, such that, for $k \gg k_V \gg 1$, dHYM positivity of $L^{\vee}$ with respect to $[\omega_0]$ is equivalent to the phase inequality of central charges 
\begin{equation}\label{BStabZPhaseIneq}
\arg Z_{k[\omega_0]}\left(\cS(k L, k_V V)\right) < \arg Z_{k[\omega_0]}\left(L^{k}[1]\right).
\end{equation}
Moreover, $\cS(k L, k_V V)$ is given explicitly by the complex
\begin{equation*}
[L^k \to L^k(k_V V)] \in D^b(X).
\end{equation*}
Assume now that $L^{\vee}$ is ample. Then, we have
\begin{equation*}
H^{-1}(L^{k}[1]) = L^k \in \Coh^{\leq 0}(X),\,H^i(L^{k}[1]) = 0, i \neq -1, 0, 
\end{equation*}
since $L^k$ is a negative line bundle and so $\mu_{[\omega_0]} \leq 0$ for all $[\omega_0]$. On the other hand, 
\begin{equation*}
H^{0}(\cS(k L, k_V V)) = L^{k}(k_V V)\otimes \olo_{k_V V} \in \Coh^{> 0}(X),\,H^i(\cS(k L, k_V V)) = 0, i \neq 0, 
\end{equation*}
by the exact sequence \eqref{DivisorExactSeq} and since $L^{k}(k_V V)\otimes \olo_{k_V V}$ is a one-dimensional torsion sheaf. 

This argument shows that, when $L^{\vee}$ is positive, the morphism \eqref{BoundaryMorDiv} is actually a morphism in the abelian category $\Coh^{\sharp}(X)$.

For our applications we also need to recall a conjecture of Arcara and Miles.
\begin{conj}[\cite{ArcaraMiles}, Conjecture 1]\label{AMConj} Let $X$ be a smooth projective surface with a fixed K\"ahler class $[\omega_0]$ and a line bundle $L$. Then, 
\begin{enumerate}
\item[$(i)$] if $L$ is unstable with respect to $\sigma_{[\omega_0]}$, then it is destabilised by $L(-C)$, where $C$ is a curve of negatibe self-intersection;  
\item[$(ii)$] if $L[1]$ is unstable with respect to $\sigma_{[\omega_0]}$, then it is destabilised by $L(C)|_C$, where $C$ is a curve of negatibe self-intersection. 
\end{enumerate} 
\end{conj}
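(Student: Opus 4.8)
The plan is to analyze the maximal destabilizing subobject of $L$ (respectively $L[1]$) in the tilted heart $\Coh^{\sharp}(X)$ and to show that the surface geometry forces it to be a twist by a single negative curve. First I would make the central charge explicit: writing $\ch(E) = (r, c_1, \ch_2)$ on the surface $X$, one has $Z_{[\omega_0]}(E) = \tfrac{r}{2}\,\omega_0^2 - \ch_2(E) + \ii\,\omega_0\cdot c_1(E)$, so that $\Imm Z_{[\omega_0]}(E) = \omega_0\cdot c_1(E)$ and $\Rea Z_{[\omega_0]}(E) = \tfrac{r}{2}\,\omega_0^2 - \ch_2(E)$. Instability of $L$ (resp.\ $L[1]$) then means the existence of a subobject in $\Coh^{\sharp}(X)$ of strictly larger phase; replacing it by the first factor of the Harder--Narasimhan filtration, I may assume the destabilizer $A$ is $Z_{[\omega_0]}$-semistable of maximal phase, with nonzero quotient of smaller phase.

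Second, for part $(i)$ I would determine the shape of $A$. Its cohomology sheaves satisfy $H^{-1}(A) = 0$, so $A$ is an honest sheaf lying in $\Coh^{>0}(X)$; a Bogomolov--Gieseker-type inequality for $Z_{[\omega_0]}$-semistable objects then bounds its rank and, in the relevant chamber, forces $A$ to be a rank-one torsion-free subsheaf of $L$. Passing to the saturation removes any codimension-two part without decreasing the phase, so I may take $A \cong L(-D)$ for an effective divisor $D$, and the destabilizing inequality $\arg Z_{[\omega_0]}(L(-D)) > \arg Z_{[\omega_0]}(L)$ becomes an explicit quadratic numerical condition in the intersection numbers $D\cdot\omega_0$, $D^2$ and $D\cdot c_1(L)$.

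Third comes the extraction of the negative curve. Applying the Hodge index theorem for the K\"ahler class $\omega_0$ together with the Zariski decomposition $D = P + N$, I would show that the quadratic destabilizing condition can hold only once $D$ has a nonzero negative part $N$, and that replacing $D$ by $N$ does not decrease the phase; an extremality argument over the finitely many numerical walls cut out by the cone of curves then isolates a single irreducible curve $C$ with $C^2 < 0$ for which $L(-C)$ already destabilizes, which is the assertion of $(i)$. For part $(ii)$ I would run the dual argument in $\Coh^{\sharp}(X)$ with $L$ negative: a maximal destabilizing subobject of $L[1]$ is a one-dimensional torsion sheaf, which the same numerical optimization forces to be of the form $L(C)|_C$ supported on a negative curve, exactly realizing the connecting morphism of $0 \to L \to L(C) \to L(C)|_C \to 0$ and matching the objects $\cS(k L, k_V V)$ built in Section~\ref{DivisorsSec}.

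Finally, the genuinely hard step is the \emph{global} structure result that the maximal destabilizer is a twist by a \emph{single} irreducible negative curve rather than by a more complicated effective divisor or by a higher-rank object. This needs both a sharp Bogomolov-type bound to exclude positive-rank destabilizers and a complete understanding of the effective cone together with its Zariski chamber decomposition to reduce an arbitrary destabilizing $D$ to one negative curve. It is precisely this dependence on the combinatorics of $\overline{\mathrm{NE}}(X)$ that obstructs a uniform argument and explains why the statement is presently established only for $X = \Bl_p\PP^2$ and $X = \Bl_{p,q}\PP^2$, where the negative curves and all numerical walls can be enumerated explicitly.
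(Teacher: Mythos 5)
The statement you are addressing is presented in the paper as a \emph{conjecture} (Conjecture \ref{AMConj}, quoted from Arcara--Miles): the paper gives no proof of it, and only records that it is known for $X = \Bl_p \PP^2$ and $X = \Bl_{p,q}\PP^2$ by the cited external works \cite{ArcaraMiles, MizunoYoshida}. So there is no proof in the paper to compare against, and a complete argument on your part would amount to settling an open problem. Your proposal is not such an argument; it is a strategy outline whose decisive steps are asserted rather than proved, as you yourself concede in your final paragraph.

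Concretely, the gaps are these. First, the claim that a Bogomolov--Gieseker-type inequality forces the maximal destabilizer $A$ of $L$ to be a rank-one torsion-free subsheaf is not established anywhere; ruling out higher-rank semistable destabilizers in the tilted heart is precisely part of the content of the conjecture, not a known input. Note also that a subobject $A \hookrightarrow L$ in $\Coh^{\sharp}(X)$ need not be a subsheaf of $L$: from the long exact sequence of cohomology sheaves, the kernel of $H^{0}(A) \to L$ is $H^{-1}(Q) \in \Coh^{\leq 0}(X)$, which can be nonzero, so even the reduction to $A \cong L(-D)$ with $D$ effective requires an argument. Similarly, in part $(ii)$ a subobject of $L[1]$ in the heart can have nonvanishing $H^{-1}$ (a subsheaf of $L$), so the assertion that the maximal destabilizer is a one-dimensional torsion sheaf is not automatic. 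Second, the passage from an arbitrary effective destabilizing divisor $D$ to a \emph{single irreducible} curve $C$ with $C^2 < 0$, via Zariski decomposition and an ``extremality argument over finitely many walls,'' is exactly the global structure result that constitutes the conjecture; naming the tools (Hodge index, Zariski decomposition, the cone of curves) does not supply the proof, and your closing paragraph effectively admits that this step is out of reach in general. What you have written is a reasonable plan, broadly parallel to the analysis Arcara--Miles carry out for $\Bl_p\PP^2$ where the negative curves can be enumerated, but it does not prove the statement for a general smooth projective surface, and it could not be inserted in the paper as a proof without changing the statement's status from conjecture to theorem.
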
 
Note that \cite{ArcaraMiles}, Conjecture 1 actually allows more general (divisorial) stability conditions. The conjecture is known for $X = \Bl_p \PP^2$ (see \cite{ArcaraMiles}) and for $X = \Bl_{p, q} \PP^2$ (see the recent work \cite{MizunoYoshida}).
\begin{prop}\label{BstabProp} Let $X$ be a toric surface. Fix a K\"ahler class $[\omega_0]$ and a positive line bundle $L^{\vee}$. Fix $k > 0$ sufficiently large. Suppose $[\omega_0]$ is \emph{generic} in the sense that it does not lie in the union of finitely many proper analytic subvarieties of $H^{1, 1}(X, \R)$ determined by $c_1(L)$. Then, the dHYM equation is solvable on $L^{\vee}$ with respect to $[\omega_0]$ if, and only if, the object $L^k[1]$ is Bridgeland stable with respect to the stability condition 
\begin{equation*}
\sigma_{k[\omega_0]} := \left(\Coh^{\sharp}(X), Z_{k[\omega_0]}\right).
\end{equation*}
The converse also holds, conditionally on the Arcara-Miles conjecture \ref{AMConj}, which is known when $X = \Bl_p \PP^2$ or $X = \Bl_{p, q} \PP^2$.
\end{prop}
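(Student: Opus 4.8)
The plan is to route everything through a chain of equivalences: dHYM solvability on $L^{\vee}$ $\Leftrightarrow$ the Nakai--Moishezon/phase inequalities \eqref{PhaseIneq} for curves $\Leftrightarrow$ phase inequalities of central charges in $\sigma_{k[\omega_0]}$, and then to compare the latter with Bridgeland (semi)stability of $L^k[1]$. By Remark \ref{SurfaceRmk}, on the surface $X$ the dHYM equation on $L^{\vee}$ with respect to $[\omega_0]$ is solvable if and only if the positivity \eqref{dHYMPosIntro}, equivalently the phase inequality \eqref{PhaseIneq}, holds for every irreducible curve $V$; by the toric reduction it suffices to test the finitely many torus-invariant curves. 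For each such curve $C$ I would use the torsion sheaf $L^k(C)\otimes\olo_C$ and the exact surface computations $Z_{k[\omega_0]}(L^k[1]) = k^2\int_X e^{-\ii\omega_0}\ch(L)$ and $Z_{k[\omega_0]}(L^k(C)\otimes\olo_C) = -k\int_C e^{-\ii\omega_0}\ch(L) - \tfrac12 C^2$. Since $L^{\vee}$ is ample the leading $k$-terms reproduce exactly the two sides of \eqref{PhaseIneq}, while the real correction $-\tfrac12 C^2$ is of lower order; as genericity forces \eqref{PhaseIneq} to be strict, for $k$ large (uniformly over the finitely many $C$) the inequality \eqref{PhaseIneq} for $C$ is equivalent to $\arg Z_{k[\omega_0]}(L^k(C)\otimes\olo_C) < \arg Z_{k[\omega_0]}(L^k[1])$, i.e. to \eqref{BStabZPhaseIneq}.

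The second step is to realise each $L^k(C)\otimes\olo_C$ as a genuine subobject of $L^k[1]$ in the heart. Rotating the triangle attached to \eqref{DivisorExactSeq} gives $L^k(C)\otimes\olo_C \to L^k[1] \to L^k(C)[1] \to$, and for $k$ large $\mu_{[\omega_0]}(L^k(C)) \le 0$, so $L^k(C)[1] \in \Coh^{\sharp}(X)$; since the other two terms already lie in $\Coh^{\sharp}(X)$ (as recalled in the excerpt), this triangle is a short exact sequence in the heart, exhibiting $L^k(C)\otimes\olo_C$ as a subobject of $L^k[1]$. With this in hand the unconditional direction (Bridgeland stability $\Rightarrow$ dHYM solvability) follows by contraposition: if the dHYM equation is not solvable then some $C$ violates \eqref{PhaseIneq}, hence $\arg Z_{k[\omega_0]}(L^k(C)\otimes\olo_C) \ge \arg Z_{k[\omega_0]}(L^k[1])$ (strict, by genericity), so the subobject just constructed destabilises $L^k[1]$, and $L^k[1]$ is not $\sigma_{k[\omega_0]}$-stable.

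For the converse (dHYM solvability $\Rightarrow$ $L^k[1]$ stable) the Arcara--Miles conjecture \ref{AMConj} is essential, and I would again argue contrapositively following Collins--Shi \cite{Collins_stability}. If $L^k[1]$ is not stable, then --- genericity ruling out the strictly semistable case --- it is unstable, and Conjecture \ref{AMConj}~(ii), applied to the line bundle $L^k$ and the class $k[\omega_0]$, forces a destabiliser of the shape $L^k(C)\otimes\olo_C$ with $C^2 < 0$. Reading the central-charge inequality for this $C$ backwards through the first step produces a curve violating \eqref{PhaseIneq}, so dHYM positivity fails and the dHYM equation is unsolvable. The content of Arcara--Miles is precisely that dHYM positivity, which controls exactly the phases of the torsion sheaves $L^k(C)\otimes\olo_C$, already accounts for every destabilising subobject.

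This last point is the main obstacle. The unconditional direction is easy because one only needs to exhibit a single destabiliser, which the failed Nakai--Moishezon inequality provides directly. The converse instead requires controlling \emph{all} subobjects of $L^k[1]$ in $\Coh^{\sharp}(X)$, and dHYM positivity is only a codimension-one (curve) condition; a priori there is no reason the full lattice of heart-theoretic subobjects should be governed by curve data. Conjecture \ref{AMConj} supplies exactly this missing classification, collapsing the stability test to the torsion sheaves $L^k(C)\otimes\olo_C$ that the Nakai--Moishezon criterion already governs --- which is why that direction is conditional in general, and unconditional precisely when \ref{AMConj} is a theorem, namely for $X = \Bl_p\PP^2$ and $X = \Bl_{p,q}\PP^2$.
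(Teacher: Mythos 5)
Your proposal is correct in substance and follows the same overall architecture as the paper's proof: both directions run through the equivalence between the Nakai--Moishezon inequalities \eqref{PhaseIneq} for the finitely many torus-invariant curves and phase inequalities of central charges for curve-supported torsion objects sitting inside $L^k[1]$ in the heart $\Coh^{\sharp}(X)$, with Conjecture \ref{AMConj} needed only for the converse. The differences are in execution. For the forward direction the paper invokes its two-scale asymptotics ($k \gg k_V \gg 1$, Section \ref{DivisorsSec} and Remark \ref{CentralChargeRmk}) for the objects $\cS(kL, k_V V)$, whereas you work at $k_V = 1$ and compute the two central charges exactly, absorbing the real correction $-\tfrac{1}{2}C^2$ into the large-$k$ limit; this is more elementary, and you also verify a point the paper leaves implicit, namely that the quotient $L^k(C)[1]$ lies in $\Coh^{\sharp}(X)$, so that $L^k(C)\otimes\olo_C$ is genuinely a subobject of $L^k[1]$ rather than merely the source of a morphism. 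For the converse the paper applies part $(i)$ of Conjecture \ref{AMConj} to the line bundle $L^k$ (stability being shift-invariant) and follows Collins--Shi \cite{Collins_stability} in checking by a direct Chern-character manipulation that no $L^k(-C)$ with $C^2<0$ destabilises; that computation works for \emph{every} such curve and every $k \ge 1$. You instead apply part $(ii)$ to $L^k[1]$ and transfer the resulting destabiliser $L^k(C)\otimes\olo_C$ back through your step-1 equivalence, which is only licensed for curves covered by step 1: you are implicitly using that every irreducible curve of negative self-intersection on a toric surface is torus-invariant (true, by the translation argument: a non-invariant $C$ would satisfy $C\cdot(t\cdot C) \ge 0$ for generic $t$ in the torus, contradicting $C\cdot(t\cdot C) = C^2 < 0$), and this should be stated, since it is what makes the set of relevant curves finite and toric.

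One further point deserves care. Your phrase ``genericity ruling out the strictly semistable case'' is not justified by what you prove: genericity excludes phase \emph{equality} with the curve-type test objects, but a priori not strict semistability of $L^k[1]$ caused by some other subobject of the heart, to which the literal statement of Conjecture \ref{AMConj} (about unstable objects) says nothing. The paper faces the same issue and resolves it by reading the conjecture as asserting that \emph{all} potential destabilisers, including phase-equal ones, are of the curve type $L^k(-C)$ (respectively $L^k(C)|_C$); adopting that reading, as the paper does, closes your argument without any appeal to genericity at this step.
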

\begin{proof} Suppose that $L^k[1]$ is stable with respect to $\sigma_{k[\omega_0]}$. Since we are assuming that $L^{\vee}$ is positive, the morphisms \eqref{BoundaryMorDiv} induce exact triangles of objects contained in the heart $\Coh^{\sharp}(X)$. Thus, by the definition of Bridgeland stability, the phase inequalities for central charges \eqref{BStabZPhaseIneq} must hold for all toric divisors $V$. Therefore, under our assumptions, by the proof of Theorem \ref{FanoThm}, Section \ref{DivisorsSec} and Remark \ref{CentralChargeRmk}, the line bundle $L^k$ is dHYM positive with respect to $k[\omega_0]$. So the same holds for $L$ with respect to $[\omega_0]$, and the dHYM equation with respect to $[\omega_0]$ is solvable on $L$ and $L^{\vee}$.

Conversely, suppose that the dHYM equation is solvable on the positive line bundle $L^{\vee}$ with respect to $[\omega_0]$. Thus, it is also solvable on the negative line bundle $L$. Following Collins-Shi \cite{Collins_stability}, proof of Theorem 4.15, we write the corresponding dHYM positivity condition as 
\begin{equation*}
C \cdot \ch_1(L) > \frac{\ch_2(L) - \frac{1}{2}[\omega_0]^2}{\ch_1(L)\cdot [\omega_0]} (C\cdot [\omega_0]),
\end{equation*}
for \emph{all} curves $C \subset X$. As we know, this must be invariant under our rescaling, i.e. equivalent to 
\begin{equation*}
C \cdot \ch_1(L^{ k}) > \frac{\ch_2(L^{ k}) - \frac{1}{2}[k\omega_0]^2}{\ch_1(L^{ k})\cdot [k\omega_0]} (C\cdot [k\omega_0]).
\end{equation*}

On the other hand, assuming the Arcara-Miles conjecture \ref{AMConj}, $L^{ k}$ is only potentially destabilised with respect to $\sigma_{k[\omega_0]}$ by the objects $L^{ k}(-C)$ where $C$ is a curve with $C^2 < 0$. The condition that such objects do \emph{not} destabilise is given by 
\begin{equation*}
\frac{\ch_2(L^{  k}) - C \cdot \ch_1(L^{ k}) + \frac{1}{2} C^2 -\frac{1}{2}[k\omega_0]^2}{\ch_1(L^{ k})\cdot[k\omega_0] - C\cdot[k\omega_0]} < \frac{\ch_2(L^{ k}) -\frac{1}{2}[k\omega_0]^2}{\ch_1(L^{ k})\cdot[k\omega_0]}.
\end{equation*}
Following again \cite{Collins_stability}, proof of Theorem 4.15, we note that this is equivalent to the condition
\begin{equation*}
C \cdot \ch_1(L^{ k}) - \frac{1}{2} C^2 > \frac{\ch_2( L^{ k}) -\frac{1}{2}[k\omega_0]^2}{\ch_1(L^{ k})\cdot[k\omega_0]}(C\cdot [k\omega_0]).
\end{equation*}
Since $C^2 < 0$, this shows that if the dHYM equation is solvable on $L^{\vee}$, then the line bundle $L^{k}$ is stable with respect to $\sigma_{k[\omega_0]}$. 

Finally, since $L$ is negative, the object $L^{k}[1]$ lies in the heart $\Coh^{\sharp}(X)$ of $\sigma_{k[\omega_0]}$. Thus it is stable with respect to $\sigma_{k[\omega_0]}$ iff its shift $L^{k}$ is, by the definition of Bridgeland stability condition. It follows that if the dHYM equation is solvable on $L^{\vee}$ then $L^{k}[1]$ is stable with respect to $\sigma_{k[\omega_0]}$.  
\end{proof}
\begin{exm}\label{CollinsShiExm} On $X = \Bl_p \PP^2$, write $h$, $e$ for the classes of the pullback of a line in $\PP^2$ and of the exceptional divisor. In the counterexample of Collins and Shi discussed in Remark \ref{CollinsShiExampleRmk}, one chooses 
\begin{equation*}
[\omega_0] = \frac{1}{\sqrt{3}} (2 h - e),\,c_1(L) = 2 h.  
\end{equation*} 
Note that $L$ is positive, so does not fit our conventions, although probably the example can be modified. 

According to \cite{Collins_stability}, Section 4, $L$ is Bridgeland stable with respect to $\sigma_{[\omega_0]}$, but it is not dHYM positive with respect to $[\omega_0]$, so the dHYM equation is not solvable on $L$ and $L^{\vee}$. 

However, we claim that $L^{k}$ is unstable with respect to $\sigma_{k[\omega_0]}$ for $k \gg 1$. Since $\mu_{[k\omega_0]}(L^k) > 0$, $L^k$ is an object of $\Coh^0(X)$, and similarly, denoting by $E$ the exceptional divisor, 
\begin{equation*}
\mu_{[k\omega_0]}(L^k(-E)) = \frac{(k \ch_1(L) - e)\cdot [k\omega_0]}{[k\omega_0]^2} = \frac{4 k^2 - k}{k^2 \sqrt{3}} > 0 
\end{equation*}
for $k \geq 1$. Thus, the morphism $L^k(-E) \to L^k$ is defined in $\Coh^0(X)$. The corresponding phase inequality   
\begin{equation*}
\arg Z_{[k\omega_0]}(L^k(-E)) < \arg Z_{[k\omega_0]}(L^k) 
\end{equation*}
is given by
\begin{equation*}
\frac{\ch_2(L^k) - e \cdot \ch_1(L^k) + \frac{1}{2} e^2 -\frac{1}{2}[k\omega_0]^2}{\ch_1(L^k)\cdot[k\omega_0] - e\cdot[k\omega_0]} < \frac{\ch_2(L^k) -\frac{1}{2}[k\omega_0]^2}{\ch_1(L^k)\cdot[k\omega_0]}.
\end{equation*}
By explicit computation, this is equivalent to 
\begin{equation*}
\frac{2 k^2 - \frac{1}{2} -\frac{1}{2}k^2}{4 k^2 - k} < \frac{2 -\frac{1}{2}}{4},
\end{equation*}
which indeed holds for $k = 1$, but fails for all integers $k > 1$.
\end{exm}
\subsubsection{Proof of Theorem \ref{BstabThm}}\label{BstabProof}
Firstly, note that the special Lagrangian condition for the shifted section $\tilde{\cL}$ coincides with the dHYM equation for the the mirror line bundle $L^{k}$ with respect to $k[\omega_0]$. This is solvable iff it is solvable for $L^{\vee}$ with respect to $[\omega_0]$. Since $L^{\vee}$ is ample by assumption, by Proposition \ref{BstabProp}, this holds if the object $L^k[1]$ is Bridgeland stable with respect to the stability condition $\sigma_{k[\omega_0]}$, i.e. if $\tilde{\cL}$ is Bridgeland stable in the sense of Definition \ref{BstabDef}. The converse also holds conditionally on the Arcara-Miles conjecture \ref{AMConj}, by Proposition \ref{BstabProp}, and so for $X = \Bl_{p} \PP^2$ or $X = \Bl_{p, q} \PP^2$.
\subsection{The unstable case}\label{UnstableSec}
In this Section we prove Theorem \ref{UnstableThm}. The proof is an application of a result on weak solutions of the dHYM equation of surfaces due to Datar-Mete-Song \cite{DatarSong_slopes}, which we briefly recall.

Let $(X, [\omega_0], [\alpha_0])$ denote a surface with K\"ahler class and a $(1,1)$-class $[\alpha_0]$. The reference \cite{DatarSong_slopes}, Section 1.3 introduces a \emph{minimal angle} $\theta_{min} \in (0,\pi)$ determined by
\begin{equation*}
\cot \theta_{min} := \sup_D \left\{\frac{([\alpha_0] - D)^2 - [\omega_0]^2}{2 ([\alpha_0] -D)\cdot [\omega_0]},\, ([\alpha_0] - D) \cdot [\omega_0] > 0 \right\}, 
\end{equation*}
where the supremum is taken over effective $\R$-divisors. It is shown that the supremum is in fact achieved by some effective $\R$-divisor $D$, so we have
\begin{equation*}
\cot \theta_{min} = \frac{([\alpha_0] - D)^2 -  [\omega_0]^2}{2 ([\alpha_0] -D)\cdot [\omega_0]}. 
\end{equation*}
The mimimal angle $\theta_{min}$ should be compared to the topological angle $\varphi = \varphi(X, c_1(L), [\omega_0])$ appearing in the Nakai-Moishezon criterion \eqref{dHYMPosIntro}. One shows the inequality
\begin{equation*}
\cot \theta_{min} \geq \cot \varphi(X, c_1(L), [\omega_0]),
\end{equation*} 
such that equality holds if and only if $(X, [\omega_0], c_1(L))$ satisfies the \emph{dHYM semipositivity condition}
\begin{equation}\label{dHYMSemiPos}
\Rea \left(c_1(L) + \ii [\omega_0]\right)^k \cdot \gamma^{m-k} \cdot Z \geq \cot(\varphi)\Imm \left(c_1(L) + \ii [\omega_0]\right)^k \cdot \gamma^{m-k} \cdot Z
\end{equation}
where $Z$ ranges though $m$-dimensional analytic subvarieties, $\gamma$ ranges through K\"ahler classes, and $k = 1,\ldots, m$.  
\begin{thm}[\cite{DatarSong_slopes}, Theorem 1.10]\label{MinSlopesThm} Suppose $[\alpha_0]$, $[\omega_0]$ satisfy $[\alpha_0] \cdot [\omega_0] > 0$. Then, for any K\"ahler form $\omega \in [\omega_0]$, there exists a unique closed current $\cT \in [\alpha_0]$ such that
\begin{align*}
\Rea \langle (\cT  + \ii \omega)^2\rangle = \cot( \theta_{min}) \Imm \langle (\cT + \ii \omega)^2\rangle,\,\cT \geq \cot( \theta_{min})\omega.
\end{align*}
The equation holds in the sense of measures, where $\langle - \rangle$ denotes the non-pluripolar part. 

Moreover we have 
\begin{equation*}
\cot \theta_{min} \geq \cot \varphi(X, c_1(L), [\omega_0]),
\end{equation*}
with equality iff $(X, [\omega_0], L)$ satisfies the dHYM semipositivity condition \eqref{dHYMSemiPos}.
\end{thm}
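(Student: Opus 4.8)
The plan is to reduce the weak dHYM equation to a single degenerate complex Monge-Amp\`ere equation and then to invoke the pluripotential theory of non-pluripolar products. First I would perform the substitution $\cT = \eta + \cot(\theta_{min})\,\omega$, so that the unknown becomes a closed \emph{positive} current $\eta \geq 0$ in the fixed class $[\eta_0] := [\alpha_0] - \cot(\theta_{min})[\omega_0]$, the positivity constraint $\cT \geq \cot(\theta_{min})\omega$ being exactly $\eta \geq 0$. Expanding $(\cT + \ii\omega)^2 = (\eta + (\cot\theta_{min} + \ii)\,\omega)^2$ and separating real and imaginary parts, the two mixed terms $\eta\wedge\omega$ cancel in $\Rea\langle(\cT+\ii\omega)^2\rangle - \cot(\theta_{min})\Imm\langle(\cT+\ii\omega)^2\rangle$, and the equation collapses to the scalar Monge-Amp\`ere equation
\begin{equation*}
\langle \eta^2 \rangle = (1 + \cot^2\theta_{min})\,\omega^2 = \frac{1}{\sin^2\theta_{min}}\,\omega^2 .
\end{equation*}
Thus the whole problem is equivalent to prescribing the non-pluripolar Monge-Amp\`ere mass of a positive closed current in the pseudoeffective class $[\eta_0]$.

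Next I would analyse $[\eta_0]$ using the minimal slope divisor $D$. Writing $\mu := [\alpha_0] - D$, the defining relation $\cot\theta_{min} = (\mu^2 - [\omega_0]^2)/(2\,\mu\cdot[\omega_0])$ rearranges precisely to $(\mu - \cot(\theta_{min})[\omega_0])^2 = \csc^2\theta_{min}\,[\omega_0]^2$, so that $P := [\eta_0] - D = \mu - \cot(\theta_{min})[\omega_0]$ satisfies $P^2 = \csc^2\theta_{min}\,[\omega_0]^2$. The first-order optimality of $D$ for the supremum forces $P\cdot D = 0$ with $P$ nef, which is exactly the Zariski decomposition $[\eta_0] = P + N$ of the surface class, with nef part $P$ and negative part $N$ supported on $D$. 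The \emph{crucial bookkeeping} is then that the movable self-intersection (volume) of $[\eta_0]$ equals $P^2 = \csc^2\theta_{min}\int_X \omega^2$, so that the prescribed measure $\csc^2\theta_{min}\,\omega^2$ carries exactly the total mass that a non-pluripolar product $\langle\eta^2\rangle$ in this class is allowed to hold. Verifying this mass-matching directly from the supremum characterisation of $\theta_{min}$ is, I expect, the technical heart of the argument, since on the boundary of the big cone non-pluripolar mass can otherwise escape into the pluripolar/negative part.

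To produce $\eta$ I would write $\eta = \eta_P + [N]$, with $[N]$ the current of integration along the negative part and $\eta_P = \theta_P + \ii\partial\bar\partial\psi \geq 0$ a positive current in the nef and big class $P$, and solve $\langle\eta_P^2\rangle = \csc^2\theta_{min}\,\omega^2$ by the Boucksom-Eyssidieux-Guedj-Zeriahi solution of the Monge-Amp\`ere equation in big cohomology classes (equivalently the Ko\l odziej-Yau estimates adapted to the surface, as in \cite{DatarSong_slopes}); since $[N]$ contributes no non-pluripolar mass we get $\langle\eta^2\rangle = \langle\eta_P^2\rangle$, and $\cT = \eta + \cot(\theta_{min})\omega$ solves the stated equation in the sense of measures. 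Uniqueness would then follow from the domination/comparison principle for non-pluripolar Monge-Amp\`ere measures in a fixed big class, which pins down $\eta_P$ up to the rigid negative part $[N]$, hence determines $\cT$ uniquely.

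Finally, the comparison $\cot\theta_{min} \geq \cot\varphi(X, c_1(L), [\omega_0])$ is immediate from the definitions: the admissible divisor $D = 0$ is allowed in the supremum, and evaluating the quotient there gives exactly $([\alpha_0]^2 - [\omega_0]^2)/(2\,[\alpha_0]\cdot[\omega_0]) = \cot\varphi$. Equality holds precisely when $D = 0$ attains the supremum, i.e. when $[\eta_0]$ is already nef with trivial negative part; translating the nefness of $[\alpha_0] - \cot(\varphi)[\omega_0]$ into positivity against all curves and all K\"ahler classes reproduces the dHYM semipositivity condition \eqref{dHYMSemiPos}, which yields the stated equivalence.
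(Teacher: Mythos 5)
First, a point of order: the paper contains no proof of this statement. Theorem \ref{MinSlopesThm} is quoted, with attribution, from \cite{DatarSong_slopes} (Theorem 1.10) and is used in Section \ref{UnstableProofSec} purely as a black box, so there is no ``paper proof'' to compare yours against; the only meaningful comparison is with the cited source, whose strategy (reduction to a degenerate complex Monge--Amp\`ere equation plus pluripotential theory in big classes) your sketch does plausibly reconstruct. Your main computational claims are in fact correct and verifiable: setting $\eta = \cT - \cot(\theta_{min})\omega$ the equation does collapse to $\langle \eta^2\rangle = \csc^2(\theta_{min})\,\omega^2$; the identity $P^2 = \csc^2(\theta_{min})[\omega_0]^2$ for $P = ([\alpha_0]-D) - \cot(\theta_{min})[\omega_0]$ follows from the defining relation for $\cot\theta_{min}$; and first-order optimality of $D$ really does give $P$ nef with $P\cdot D = 0$, which, combined with the Hodge index theorem (the orthogonal complement of a class of positive square is negative definite), forces $P$ to be the Zariski positive part and yields the mass matching $\operatorname{vol}([\alpha_0]-\cot(\theta_{min})[\omega_0]) = P^2$ that you correctly identify as the crux for applying Boucksom--Eyssidieux--Guedj--Zeriahi in the class $P$.

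That said, three steps are asserted rather than proved, and they carry real content. (i) Everything rests on the supremum defining $\theta_{min}$ being \emph{attained} by an effective $\R$-divisor $D$; this attainment is itself a nontrivial part of the results of \cite{DatarSong_slopes} (the paper only records it in the surrounding text, again as a citation), and without it you have no $D$, no $P$, and no decomposition. (ii) Uniqueness needs three separate ingredients which your phrase ``domination/comparison principle'' compresses: that any closed positive current in the class dominates the integration current of the negative part (Siu decomposition plus minimality of generic Lelong numbers along the components of $N$), that the equation forces the solution to have full non-pluripolar mass, and only then the Dinew/BEGZ uniqueness theorem. (iii) In the equality case, your first-order argument gives ``equality $\Rightarrow$ $P_0 := [\alpha_0]-\cot(\varphi)[\omega_0]$ nef'', but the converse direction you state as an ``i.e.'' is not a tautology: one must show that nefness of $P_0$ forces $(P_0 - D)^2 \leq P_0^2$ for \emph{every} admissible effective $D$, which again requires a Hodge-index/reverse Cauchy--Schwarz argument, including ruling out the regime $(P_0-D)\cdot[\omega_0] < 0$ allowed by the constraint $([\alpha_0]-D)\cdot[\omega_0]>0$ when $\cot\varphi > 0$. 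These gaps are fillable (I checked each can be closed on a surface), but they are precisely where the work in \cite{DatarSong_slopes} lies, so the proposal should be regarded as a correct outline rather than a proof.
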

\subsubsection{Proof of Theorem \ref{UnstableThm}}\label{UnstableProofSec}
Suppose the SYZ Lagrangian $\tilde{\cL}$ is Bridgeland unstable. By definition, this means that the mirror shifted line bundle $L^k[1]$ is unstable (i.e. not semistable) with respect to $\sigma_{k[\omega_0]}$. Under our assumptions, $L^k[1]$ is an element of $\Coh^{\sharp}(X)$, so the instability of $L^k[1]$ implies the instability of $L^k$. As in the proof of Proposition \ref{BstabProp}, assuming the Arcara-Miles conjecture \ref{AMConj}, this means that there exists a curve $C\subset X$, with $C^2<0$, such that
\begin{equation*}
C \cdot \ch_1(L^{ k}) - \frac{1}{2} C^2 < \frac{\ch_2( L^{ k}) -\frac{1}{2}[k\omega_0]^2}{\ch_1(L^{ k})\cdot[k\omega_0]}(C\cdot [k\omega_0]).
\end{equation*}
Then we must have
\begin{equation*}
C \cdot \ch_1(L) < \frac{\ch_2(L) - \frac{1}{2}[\omega_0]^2}{\ch_1(L)\cdot [\omega_0]} (C\cdot [\omega_0]),
\end{equation*}
which implies that the dHYM semipositivity condition \eqref{dHYMSemiPos} cannot hold.

Thus, according to Theorem \ref{MinSlopesThm}, there exists a nontrivial effective $\R$-divisor $D$, such that 
\begin{equation*}
\cot\theta_{min} = \frac{(-c_1(L) - D)^2 - [\omega_0]^2}{2(-c_1(L) - D) \cdot [\omega_0]},\,(-c_1(L) - D) \cdot [\omega_0]>0
\end{equation*} 
with 
\begin{equation*}
\cot\theta_{min} > \cot \varphi(X, -c_1(L), [\omega_0]). 
\end{equation*} 
Note that since $D$ attains the supremum in the definition of $\cot\theta_{min}$, we must have
\begin{equation*}
\cot\theta_{min} = \cot \varphi(X, -L -D , [\omega_0]).
\end{equation*}
Then, it follows from Theorem \ref{MinSlopesThm} again that there exists a unique weak solution of the dHYM equation \eqref{dHYMIntro}, \emph{with the correct topological angle $e^{\ii\phi}$}, i.e. a unique closed current $\cT_1 \in c_1(-L -D )$ which satisfies the equation 
\begin{equation}\label{WeakdHYM}
\Rea \langle (\cT_1 + \ii \omega_0)^2\rangle = \cot(\varphi(X, -L -D , [\omega_0]) \Imm \langle (\cT_1 + \ii \omega_0)^2\rangle
\end{equation}   
in the sense of measures, where $\langle - \rangle$ denotes the non-pluripolar part, and such that  
\begin{equation*}
\cT_1 - \cot(\varphi(X, -L  -D , [\omega_0]) \omega_0 \geq 0.
\end{equation*}
Suppose now that $D$ is in fact a $\Q$-divisor. Then, possibly by taking $k$ even larger, we can assume that $-k L -k D$ is a genuine line bundle. The morphism in $D^b(X)$
\begin{equation*}
L^k[1] \to L^k(kD)[1]   
\end{equation*}
induces a morphism in $\FS(\cY_{q_k}, W(k \omega_0))$, 
\begin{equation*}
\tilde{\cL} \to \tilde{\cL}_D, 
\end{equation*}
where we set $\tilde{\cL}_D := \cL_D[1]$ for $\cL_D$ a SYZ transform of $L^k(kD)$ with respect to $k\omega_0$. 

Since $\cT_1$ is the curvature of a singular Hermitian metric on $-L -D $, the equality of measures \eqref{WeakdHYM}, which is invariant under rescaling $\cT_1$ and $\omega_0$ by $k > 0$, says that $\tilde{\cL}_D$ is Hamiltonian isotopic, in a weak sense, to a (shifted) Lagrangian current section which is a weak solution of the special Lagrangian equation with phase angle $\theta_{min}$.
\begin{rmk} Note that, by Theorem \ref{MinSlopesThm}, the class $c_1(-L -D )$ satisfies the dHYM semipositivity condition \eqref{dHYMSemiPos} with respect to $[\omega_0]$. This implies that the Nakai-Moishezon criterion \eqref{dHYMPosIntro} holds weakly, i.e. for all proper irreducible subvarieties $V \subset X$ we have
\begin{equation}\label{NumericaldHYMSemiPos} 
\int_V \Rea(\ii \omega_0 + c_1(-L-D))^{\dim V} - \cot(\varphi)\Imm(\ii \omega_0 +  c_1(-L-D))^{\dim V} \geq 0.
\end{equation} 
As in the proof of Proposition \ref{BstabProp}, the numerical semipositivity condition \eqref{NumericaldHYMSemiPos} implies that $ L^k(kD)[1]$ is in fact stable with respect to $\sigma_{k[\omega_0]}$. However, in general, this does not automatically show that $\tilde{\cL}_D$ is Hamiltonian isotopic to a \emph{smooth} special Lagrangian (as in Theorem \ref{BstabThm}). For this, $[\omega_0]$ would need to be generic with respect to $c_1(L^k(kD))$, but the divisor $D$ in turn depends on $[\omega_0]$.   
\end{rmk}
\begin{exm}\label{SupportExample} For the sake of this example we change our sign convention to fit with \cite{DatarSong_slopes}, Theorem 1.13. Consider $X = \Bl_p \PP^2$ endowed with the K\"ahler class $[\omega_0] = b [H] - [E]$ and the line bundle $L' = p H - q E$ (with $c_1(L') \cdot [\omega_0] > 0$). If $q < \cot(\varphi(X, L', [\omega_0]))$, then $L'$ does not support a smooth dHYM solution, we have $\cot(\theta_{min}) > \cot(\varphi(X, L', [\omega_0]))$, and there is a unique closed current $\cT \in c_1(L')$ such that
\begin{equation*} 
\Rea \langle (\cT + \ii \omega_0)^2\rangle = \cot(\theta_{min}) \Imm \langle (\cT + \ii \omega_0)^2\rangle,\,\cT \geq \cot(\theta_{min}) [\omega_0].
\end{equation*}   
According to \cite{DatarSong_slopes} Theorem 1.13 (3), we have in this case
\begin{equation*}
\cT = \chi + (\cot(\theta_{min}) - q)[E],
\end{equation*}
where $\chi$ is a $(1,1)$-form with bounded local potentials, solving the equation
\begin{equation*} 
\Rea (\chi + \ii \omega_0)^2 = \cot(\theta_{min}) \Imm (\chi + \ii \omega_0)^2 
\end{equation*}    
in the sense of Bedford-Taylor on $X\setminus E$. 

Moreover, we have 
\begin{equation*}
\cot(\theta_{min}) = \cot \varphi(X, L'(-D), [\omega_0])
\end{equation*}
for some effective $\R$-divisor given by $D = t E$, for some $t > 0$, realising the supremum
\begin{equation*}
\sup_t \left\{\frac{(c_1(L') - t E)^2 - [\omega_0]^2}{2(c_1(L') - t E)\cdot[\omega_0]},\,(c_1(L') - t E)\cdot[\omega_0]>0\right\}.
\end{equation*}
It follows from \cite{DatarSong_slopes} Theorem 1.13 (2) that the unique closed current $\cT_1 \in c_1(L'(-D))$ such that
\begin{equation*} 
\Rea \langle (\cT_1 + \ii \omega_0)^2\rangle = \cot \varphi(X, L'(-D), [\omega_0]) \Imm \langle (\cT_1 + \ii \omega_0)^2\rangle,\,\cT_1 \geq \cot \varphi(X, L'(-D), [\omega_0]) [\omega_0],
\end{equation*}    
is actually more regular. We have in fact
%\begin{equation*}
$\cT_1 = \chi_1$,
%\end{equation*}
where $\chi_1$ is a $(1,1)$-form with bounded local potentials, satisfying the equation
\begin{equation*} 
\Rea (\chi_1 + \ii \omega_0)^2 = \cot \varphi(X, L'(-D), [\omega_0]) \Imm (\chi_1 + \ii \omega_0)^2 
\end{equation*}    
in the sense of Bedford-Taylor on all $X$.
\end{exm}
\section{Lower phase: proof of Proposition \ref{LowerPhaseProp}}\label{LowerPhaseSec}
Recall Proposition \ref{LowerPhaseProp} is concerned with the specific example $X = \Bl_p \PP^n$, away from the supercritical phase. According to \cite{JacobSheu}, Theorem 2, in this case, there exists a unique $\hat{\theta} \in \R$, satisfying 
\begin{equation*}
\hat{\theta} = \arg\left(-\int_X e^{-\ii \omega_0}\ch(L^{\vee})\right) \mod 2\pi, 
\end{equation*}
such that the dHYM equation is solvable on $L^{\vee}$ iff the constant Lagrangian phase equation 
\begin{equation*} 
\vartheta(\omega^{-1} \alpha) = \hat{\theta}  
\end{equation*} 
is solvable for $\alpha \in [\alpha_0] = c_1(L^{\vee})$. Moreover, the latter equation is solvable iff for all toric divisors $V \subset X$ we have  
\begin{align}\label{DualJacobSheuIneq}
\hat{\theta} - \frac{\pi}{2}  < \arg\left(-\int_V e^{-\ii \omega_0}\ch(L^{\vee})\right) < \hat{\theta} + \frac{\pi}{2}.
\end{align}
As in Section \ref{PhaseIneqSec}, we note
\begin{align*} 
&\arg\left(-\int_X e^{-\ii \omega_0}\ch(L^{\vee})\right) = \arg\left(-(-1)^n\int_X e^{\ii \omega_0}\ch(L)\right),\\
&\arg\left(-\int_V e^{-\ii \omega_0}\ch(L^{\vee})\right)= \arg\left(-(-1)^{\dim V}\int_V e^{\ii \omega_0}\ch(L)\right).
\end{align*}
So, by complex conjugation, we have
\begin{equation*}
\hat{\theta} = -\arg\left(-(-1)^n\int_X e^{-\ii \omega_0}\ch(L)\right) \mod 2\pi, 
\end{equation*}
and \eqref{DualJacobSheuIneq} can be written as
\begin{align*} 
\hat{\theta} - \frac{\pi}{2}  < -\arg\left( (-1)^{n}\int_V e^{-\ii \omega_0}\ch(L)\right) < \hat{\theta} + \frac{\pi}{2}.
\end{align*}
Then, by \eqref{AsympX}, we have
\begin{equation*} 
\hat{\theta} = -\arg\left(-(-2\pi \ii)^{-n} \int_{\Gamma(L^{\otimes k})} e^{-W(k \omega_0)/z} \Omega_0\right) + O(k^{-1}) \mod 2\pi,
\end{equation*}
and, using \eqref{AsympV}, the inequality \eqref{DualJacobSheuIneq} is equivalent to
\begin{align*}
\hat{\theta} - \frac{\pi}{2} < -\arg\left((-2\pi \ii)^{-n} \int_{\Gamma(\cS(k L, k_V V))} e^{-W(k \omega_0)/z} \Omega_0\right) < \hat{\theta} + \frac{\pi}{2}
\end{align*}
for $k \gg k_V \gg 1$.
\section{Higher rank}
This Section contains the proof of Theorem \ref{HighRankFanoThm}. We consider a rank $2$ vector bundle $E \to X$ on a toric weak del Pezzo surface $X$. Recall that the positivity (i.e. subsolution) and stability conditions \eqref{HighRankPositivity}, \eqref{HighRankStability}, appearing in Theorem \ref{KellerScarpaThm}, imply the phase inequalities \eqref{HighRankPhaseInequ}. 
\subsection{Proof of Theorem \ref{HighRankFanoThm}}\label{HighRankProofSec}
As for Theorem \ref{FanoThm}, the proof involves a parameter $z > 0$, which can be finally specialised to $z = 1$. By assumption, for all $k > 0$, $E_k$ is a nontrivial extension of line bundles, 
\begin{equation*}
0 \to L^{k}_1 \to E_k \to L^{k}_2 \to 0,
\end{equation*}
so in particular we have
\begin{equation*}
\ch(E_k) = \ch(L^k_1 \oplus L^k_2) = \ch(L^k_1) + \ch(L^k_2).
\end{equation*}
We write $E := E_1$. 
\subsubsection{Period of $E$}
Let us consider the integral
\begin{equation}\label{Rank2ScaledGammaIntegral} 
\int_X \left(z^{c_1(X)} z^{\frac{\deg}{2}} e^{2\pi k [\omega_0 - \ii \beta_0]/z} \sum_{d \in A^{\vee}_X \cap \kk_{\Z}} q^{k d} \prod^m_{i = 1}\frac{\prod^0_{j = -\infty}( D_i - j z)}{\prod^{D_i \cdot d}_{j = -\infty} (D_i - j z)}\right)\cup \Gcl_X (2\pi \ii)^{\frac{\deg}{2}} \ch(E_k). 
\end{equation} 
Since $\ch(E_k) = \ch(L^k_1) + \ch(L^k_2)$, \eqref{Rank2ScaledGammaIntegral} can be expanded as
\begin{align*}  
&(2\pi \ii)^2 k^2 \int_X e^{-\ii [\omega_0 - \ii \beta_0]} \ch(L_1) + (2\pi \ii)^2 k^2 \int_X e^{-\ii [\omega_0 - \ii \beta_0]} \ch(L_2) + O(k)\\
&= (2\pi \ii)^2 k^2 \int_X e^{-\ii [\omega_0 - \ii \beta_0]} \ch(L_1\oplus L_2) + O(k)\\
&= (2\pi \ii)^2 k^2 \int_X e^{-\ii [\omega_0 - \ii \beta_0]} \ch(E) + O(k).
\end{align*}
At the same time, by Theorem \ref{GammaThm}, \eqref{Rank2ScaledGammaIntegral} equals the period
\begin{equation*}
\int_{\Gamma(E_k)} e^{-W(k (\omega_0 - \ii \beta_0))/z} \Omega_0.
\end{equation*}
Combining our computations, we find
\begin{align}\label{AsympE}
&\int_X e^{-\ii   [\omega_0 - \ii \beta_0]} \ch(E) = (2\pi \ii)^{-2} k^{-2} \int_{\Gamma(E_k)} e^{-W(k (\omega_0 - \ii \beta_0))/z} \Omega_0 + O(k^{-1}). 
\end{align}
Note that we also have
\begin{align}\label{AsympEk}
&\int_X e^{-\ii k  [\omega_0 - \ii \beta_0]} \ch(E_k) = \left(\frac{1}{2\pi \ii}\right)^2 \int_{\Gamma(E_k)} e^{-W(k (\omega_0 - \ii \beta_0))/z} \Omega_0\,(1 + O(k^{-1})). 
\end{align}
\subsubsection{Restriction to a curve}
Let $V \subset X$ be an irreducible curve. Similarly to \eqref{ScaledGammaIntegralV}, we consider the quantity 
\begin{align}\label{Rank2ScaledGammaIntegralV} 
&\nonumber\int_X\left(z^{c_1(X)} z^{\frac{\deg}{2}} e^{2\pi k [\omega_0 - \ii \beta_0]} \sum_{d \in A^{\vee}_X \cap \kk_{\Z}} q^{k d} \prod^m_{i = 1}\frac{\prod^0_{j = -\infty}( D_i - j z)}{\prod^{D_i \cdot d}_{j = -\infty} (D_i - j z)}\right)\\
&\quad\quad\cup \Gcl_X (2\pi \ii)^{\frac{\deg}{2}} \ch(E_k\otimes \olo(V)^{\otimes k_1}) 
\end{align} 
for $k \gg k_1 \gg 1$. The leading order term in $k$ is the integral \eqref{Rank2ScaledGammaIntegral}; the leading correction is given by
\begin{align*}  
& k_1 \int_X e^{2\pi k   [\omega_0 - \ii \beta_0]} (2\pi \ii)^{\frac{\deg}{2}}\left(\ch(L^k_1\oplus L^k_2) \cup c_1(\olo(V)) \right)\\
& = k_1 \int_X e^{2\pi k   [\omega_0 - \ii \beta_0]} (2\pi \ii)^{\frac{\deg}{2}}\left(\ch(L^k_1) \cup c_1(\olo(V)) \right)\\
&+ k_1 \int_X e^{2\pi k   [\omega_0 - \ii \beta_0]} (2\pi \ii)^{\frac{\deg}{2}}\left(\ch(L^k_2) \cup c_1(\olo(V)) \right)\\
%\end{align*}
%\begin{align*} 
& = (2\pi \ii )^{2} k k_1 \int_X e^{-\ii [\omega_0 - \ii \beta_0]} \ch(L_1) \cup c_1(\olo(V)) \\
& + (2\pi \ii )^{2} k k_1 \int_X e^{-\ii [\omega_0 - \ii \beta_0]} \ch(L_2) \cup c_1(\olo(V)) + O(k, k^0_V)\\
&= (2\pi \ii )^{2} k k_1 \int_V e^{-\ii [\omega_0 - \ii \beta_0]} \ch(E|_{V}) + O(k, k^0_V).
\end{align*} 
At the same time, by Theorem \ref{GammaThm}, the integral \eqref{Rank2ScaledGammaIntegralV} equals the period 
\begin{align*}
\int_{\Gamma(E_k\otimes \olo(V)^{\otimes k_1})} e^{-W(k(\omega_0 - \ii \beta_0))/z} \Omega_0.
\end{align*}
Thus, we find 
\begin{align*}
& \int_V e^{- \ii  [\omega_0 - \ii \beta_0]} \ch(E|_{V})  = (2\pi \ii )^{-2} k^{-1} k^{-1}_1 \int_{\Gamma(E_k\otimes \olo(V)^{\otimes k_1})} e^{-W(k(\omega_0 - \ii \beta_0))/z} \Omega_0\\ 
& = (2\pi \ii )^{-2} k^{-1} k^{-1}_1\int_{\Gamma(\cS_{V})} e^{-W(k(\omega_0 - \ii \beta_0))/z} \Omega_0,  
\end{align*}
where 
\begin{equation*}
\cS_{V} := [E_k \to E_k\otimes \olo(V)^{\otimes k_1}] \in D^b(X),
\end{equation*}
so there is a morphism
\begin{equation*}
\cS_{V}[-1] \to E_k. 
\end{equation*}
Then, by \eqref{AsympE}, under our genericity assumption on $[\omega_0]$, the phase inequality appearing in \eqref{HighRankPhaseInequ},  
\begin{align*}
&\arg\left(\int_X e^{-\ii \omega_0} e^{-\beta_0} \ch(E)\right) -\pi\\
&< \arg\left(\int_V e^{-\ii \omega_0} e^{-\beta_0} \ch(E|_{V})\right) - \pi < \arg\left(\int_X e^{-\ii \omega_0} e^{-\beta_0} \ch(E)\right),
\end{align*}
is equivalent to the phase inequality of periods
\begin{align}\label{HighRankPeriodRestriction}
&\nonumber \arg\left(\int_{\Gamma(E_k)} e^{-W(k (\omega_0 - \ii \beta_0))/z} \Omega_0\right)-\pi\\
&<\arg\left(\int_{\Gamma(\cS_{V})} e^{-W(k(\omega_0 - \ii \beta_0))/z} \Omega_0 \right) - \pi < \arg\left(\int_{\Gamma(E_k)} e^{-W(k (\omega_0 - \ii \beta_0))/z} \Omega_0\right).
\end{align}
\subsubsection{Sub-bundle}
According to \eqref{AsympX}, we have 
\begin{equation*} 
\int_X e^{- \ii   [\omega_0 - \ii \beta_0]} \ch(L_1) = (2\pi \ii)^{-2} k^{-2} \int_{\Gamma(L^{\otimes k}_1)} e^{-W(k (\omega_0 - \ii \beta_0))/z} \Omega_0 + O(k^{-1}).
\end{equation*}
Then, by \eqref{AsympE}, under our genericity assumption on $[\omega_0]$, the phase inequality appearing in \eqref{HighRankPhaseInequ},
\begin{align*}
\arg\left(\int_X e^{-\ii \omega_0} e^{-\beta_0} \ch(E)\right)-\pi<\arg\left(\int_X e^{-\ii \omega_0} e^{-\beta_0} \ch(L_1)\right) < \arg\left(\int_X e^{-\ii \omega_0} e^{-\beta_0} \ch(E)\right),  
\end{align*} 
is equivalent to the phase inequality of periods
\begin{align}\label{HighRankPeriodSubline} 
&\nonumber\int_{\Gamma(E_k)} e^{-W(k (\omega_0 - \ii \beta_0))/z}\Omega_0-\pi\\
&<\arg\left(  \int_{\Gamma(L^{\otimes k}_1)} e^{-W(k(\omega_0 - \ii \beta_0))/z} \Omega_0 \right) < \arg\left(\int_{\Gamma(E_k)} e^{-W(k (\omega_0 - \ii \beta_0))/z} \Omega_0\right). 
\end{align}
\subsection{An example}\label{KellerScarpaExm}
This construction is taken from \cite{KellerScarpa} (Example 3.5). Let $X = \Bl_p \PP^2$. Write $H$ for the class of a line in $\PP^2$, $E'$ for the exceptional divisor, and choose
\begin{equation*}
L_1 = r(q H - p E'),\,L_2 = \olo_X
\end{equation*}
for positive integers $p, q, r$, with $p > q$. Then one can check for all $r \gg 0$ there is a nontrivial extension
\begin{equation*}
0 \to L_1 \to E_1 \to \olo_X \to 0,
\end{equation*}
and so the same holds for 
\begin{equation*}
0 \to L^{\otimes k}_1 \to E_k \to \olo_X \to 0,
\end{equation*}
for $k>0$. Thus, Theorem \ref{HighRankFanoThm} and Corollary \ref{HighRankFSCor} are applicable to this example. According to \cite{KellerScarpa}, if we choose 
\begin{equation*}
[\omega_0] = p H - q E',
\end{equation*}
then there exists a chamber in the space of $B$-fields, containing certain $B$-fields proportional to $[\omega_0]$, such that the conditions \eqref{HighRankPositivity}, \eqref{HighRankStability} hold. It is not yet known, in general, if the dHYM equation \eqref{HigherRankdHYM} is solvable for these $B$-fields (although it is known for some special values of $p, q$, and some $B$-fields, see \cite{KellerScarpa} Example 4.15). 
\subsection{Proof of Proposition \ref{HighRankBInstProp}}\label{HighRankBInstPropProof}
We follow the discussion in Section \ref{BSec}. It is standard to allow a nontrivial $B$-field $\beta_0$, by replacing the Chern character $\ch(E)$ with the twisted Chern character 
\begin{equation*}
\ch^{[\beta_0]}(E) := e^{-[\beta_0]} \ch(E)
\end{equation*}
both in the Mumford-Takemoto slope and in the central charge. We denote the resulting stability condition by
\begin{equation*}
\sigma_{[\omega_0], [\beta_0]} := \left(\Coh^{\sharp}_{[\beta_0]}(X), Z_{[\omega_0], [\beta_0]}\right).
\end{equation*}
Fix $k > 0$. Then, under the assumption
\begin{equation*}
\mu_{k[\omega_0], k[\beta_0]}(L^{\otimes k}_i) \leq 0,\,i = 1, 2,
\end{equation*}
the bundle $E_k$ is an extension of sheaves with nonpositive $k[\beta_0]$-twisted slopes and so we have $H^{-1}(E_k[1]) = E_k \in \Coh^{\leq 0}_{[\beta_0]}(X)$, that is, $E_k[1]$ defines an object of the heart $\Coh^{\sharp}_{k[\beta_0]}(X)$.  

Similarly we have $\mu_{k[\omega_0], k[\beta_0]}(L^{\otimes k}_1)\leq 0$ and so $H^{-1}(L^{\otimes k}_1[1]) = L^{\otimes k}_1 \in \Coh^{\leq 0}_{k[\beta_0]}(X)$, giving $L^{\otimes k}_1[1] \in \Coh^{\sharp}_{[\beta_0]}(X)$. 

On the other hand we note that the only nonvanishing cohomology sheaf $H^0(\cS_{V})$ of $\cS_{V} = [E_k \to E_k\otimes \olo(V)^{\otimes k_1}]$ is the one-dimensional torsion sheaf $E_k \otimes \olo_{k_1 V}$, lying in $\Coh^{>0}_{k[\beta_0]}(X)$ by definition, so we also have $\cS_V \in \Coh^{\sharp}_{k[\beta_0]}(X)$.  

By the proof of Theorem \ref{HighRankFanoThm}, for $k \gg k_1 \gg 1$, the phase inequality for periods \eqref{HighRankPeriodRestriction}
is equivalent to
\begin{equation*} 
\arg\left(Z_{k[\omega_0], k[\beta_0] }(E_k)\right) < \arg\left(Z_{k[\omega_0], k[\beta_0] }(\cS_V)\right)  < \arg\left(Z_{k[\omega_0], k[\beta_0] }(E_k)\right) + \pi.
\end{equation*}
Similarly, \eqref{HighRankPeriodSubline} is equivalent to
\begin{equation*} 
\arg\left(Z_{k[\omega_0], k[\beta_0]}(E_k)\right) - \pi < \arg\left(Z_{k[\omega_0],k[\beta_0]}(L^{\otimes k}_1) \right) < \arg\left(Z_{k[\omega_0],k[\beta_0]}(E_k)\right).
\end{equation*}  
Moreover, the quantities 
\begin{equation*}
Z_{k[\omega_0],k[\beta_0]}(E_k),\,Z_{k[\omega_0],k[\beta_0]}(\cS_V),\,Z_{k[\omega_0],k[\beta_0]}(L^{\otimes k}_1)
\end{equation*}
are central charges of objects in the heart $\Coh^{\sharp}_{k[\beta_0]}(X)$ and so lie in a fixed semi-open half-plane. Therefore, the inequalities \eqref{HighRankPeriodRestriction} and \eqref{HighRankPeriodSubline} are in fact equivalent to 
\begin{equation*} 
\arg\left(Z_{k[\omega_0],k[\beta_0]}(\cS_V)\right) < \arg\left(Z_{k[\omega_0],k[\beta_0]}(E_k[1])\right), 
\end{equation*} 
respectively
\begin{equation*} 
\arg\left(Z_{k[\omega_0],k[\beta_0]}(L^k_1[1]) \right) < \arg\left(Z_{k[\omega_0],k[\beta_0]}(E_k[1])\right),
\end{equation*} 
and so are implied by the Bridegland stability of $E_{k}[1]$ with respect to $\sigma_{k [\omega_0], k[\beta_0]}$, by using the morphisms 
\begin{equation*}
\cS_V \to E_k[1],\,L^k_1[1] \to E_k[1]
\end{equation*}
in $\Coh^{\sharp}_{k[\beta_0]}(X)$.
\begin{rmk} The central charge $Z_{k[\omega_0],k[\beta_0]}(E_k[1])$ is natural for the corresponding Lagrangian $\cL$, by \eqref{AsympEk}.
\end{rmk}
\section{General toric manifolds}
This Section proves Theorem \ref{GeneralThm}. Suppose $X$ is a projective toric manifold. We denote by $V \subset X$ a proper, irreducible, toric subvariety.
\subsection{Proof of Theorem \ref{GeneralThm}}\label{GeneralToricProofSec}
Using the mirror map with respect to $\omega_0$, by the properties recalled in Section \ref{MirrorSec}, we have a equalities
\begin{align*}
& \int_{X} e^{-\ii \omega_0} \ch(L) = P_{\beta}\left(\Theta^{-1}_{\omega_0}(e^{-\ii [\omega_0]}), \Theta^{-1}_{\omega_0}(\ch(L))\right),\\
& \int_{V} e^{-\ii \omega_0} \ch(L) = P_{\beta}\left(\Theta^{-1}_{\omega_0}(e^{-\ii [\omega_0]}), \Theta^{-1}_{\omega_0}(\ch(L) \cup \pd(V))\right), 
\end{align*}
involving the higher residue pairing $P$. The quantities
\begin{equation*}
\int_{X} e^{-\ii [\omega_0]} \ch(L),\,\int_{V} e^{-\ii [\omega_0]} \ch(L)
\end{equation*}
clearly do not depend on the formal variable $z$ appearing in the higher residue pairing. Thus, we must have
\begin{align*}
& \int_{X} e^{-\ii [\omega_0]} \ch(L) =   K^{(0)}_{[\omega_0]}\left(\Theta^{-1}_{[\omega_0]}(e^{-\ii [\omega_0]}), \Theta^{-1}_{[\omega_0]}(\ch(L))\right),\\
& \int_{V} e^{-\ii [\omega_0]} \ch(L) =   K^{(0)}_{[\omega_0]}\left(\Theta^{-1}_{[\omega_0]}(e^{-\ii [\omega_0]}), \Theta^{-1}_{[\omega_0]}(\ch(L) \cup \pd(V))\right),
\end{align*}
where $K^{(0)}_{[\omega_0]}$ denotes the specialisation of the higher residue pairing $P_{[\omega_0]}$ at $z = 0$, namely, the classical Grothendieck residue pairing. 

It follows that, with our assumptions, \eqref{PhaseIneq} is equivalent to the phase inequality
\begin{align*}
&\arg\left((-1)^{\codim V} K^{(0)}_{[\omega_0]}\left(\Theta^{-1}_{[\omega_0]}(e^{-\ii [\omega_0]}), \Theta^{-1}_{[\omega_0]}(\ch(L) \cup \pd(V))\right)\right)\\ 
& < \arg K^{(0)}_{[\omega_0]}\left(\Theta^{-1}_{[\omega_0]}(e^{-\ii [\omega_0]}), \Theta^{-1}_{[\omega_0]}(\ch(L))\right). 
\end{align*}

Now we consider the large volume limit. Namely, by the same argument, we observe that \eqref{PhaseIneq}  holds in our case iff we have
\begin{align*}
&\arg\left((-1)^{\codim V} K^{(0)}_{k[\omega_0]}\left(\Theta^{-1}_{k[\omega_0]}(e^{-\ii [\omega_0]}), \Theta^{-1}_{k[\omega_0]}(\ch(L) \cup \pd(V))\right)\right)\\
& < \arg K^{(0)}_{k[\omega_0]}\left(\Theta^{-1}_{k[\omega_0]}(e^{-\ii [\omega_0]}), \Theta^{-1}_{k[\omega_0]}(\ch(L))\right) 
\end{align*}
for all sufficiently large $k > 0$. Since we have
\begin{align*}
&K^{(0)}_{k[\omega_0]}\left(\Theta^{-1}_{k[\omega_0]}(e^{-\ii [\omega_0]}), \Theta^{-1}_{k[\omega_0]}(\ch(L) \cup \pd(V))\right) \\
&= K^{(0)}_{k[\omega_0]}\left(\Theta^{-1}_{k[\omega_0]}(e^{-\ii [\omega_0]}), \Theta^{-1}_{k[\omega_0]}(\ch(L)) \Theta^{-1}_{k[\omega_0]}(\pd(V))\right) + O(k^{-1})
\end{align*}
we can express our condition as
\begin{align*}
&\arg\left((-1)^{\codim V} K^{(0)}_{k[\omega_0]}\left(\Theta^{-1}_{k[\omega_0]}(e^{-\ii [\omega_0]}), \Theta^{-1}_{k[\omega_0]}(\ch(L)) \Theta^{-1}_{k[\omega_0]}(\pd(V))\right)\right)\\
&< \arg K^{(0)}_{k[\omega_0]}\left(\Theta^{-1}_{k\beta}(e^{-\ii [\omega_0]}), \Theta^{-1}_{k[\omega_0]}(\ch(L))\right), 
\end{align*}
for $k \gg 1$, as long as $[\omega_0]$ is \emph{generic} in the sense that it does not lie in the union of finitely many proper analytic subvarieties of $H^{1, 1}(X, \R)$ determined by $c_1(L)$ (the union ranging through all toric irreducible toric subvarieties $V$).

Using the expression for the Grothendieck residue in terms of critical points, we find 
\begin{align*}
&K^{(0)}_{k[\omega_0]}\left(\Theta^{-1}_{k[\omega_0]}(e^{-\ii [\omega_0]}), \Theta^{-1}_{k[\omega_0]}(\ch(L)) \Theta^{-1}_{k[\omega_0]}(\pd( V ))\right)\\
& = \sum_{p \in \Crit(W_{k[\omega_0]})} \frac{\Theta^{-1}_{k[\omega_0]}(e^{-\ii [\omega_0]}) \Theta^{-1}_{k[\omega_0]}(\ch(L))}{\prod_i x^2_i \det \nabla^2 W_{k[\omega_0]}}\big|_p \, \Theta^{-1}_{k[\omega_0]}(\pd( V ))\big|_p\frac{\Omega_{k[\omega_0]}}{\Omega_0}\big|_p,
\end{align*}
and similarly
\begin{align*}
&K^{(0)}_{k[\omega_0]}\left(\Theta^{-1}_{k[\omega_0]}(e^{-\ii [\omega_0]}), \Theta^{-1}_{k[\omega_0]}(\ch(L))\right) = \sum_{p \in \Crit(W_{k[\omega_0]})} \frac{\Theta^{-1}_{k[\omega_0]}(e^{-\ii [\omega_0]}) \Theta^{-1}_{k[\omega_0]}(\ch(L))}{\prod_i x^2_i \det \nabla^2 W_{k[\omega_0]}}\big|_p \, \frac{\Omega_{k[\omega_0]}}{\Omega_0}\big|_p.
\end{align*}

Applying \cite{CoatesCortiIritani_hodge}, Lemma 6.2, if we choose an appropriate normalisation of $W_{k[\omega_0]}$, then, for fixed $V$, there exist a \emph{proper} subset 
\begin{equation*}
\emptyset \neq \Lambda_V \subsetneq \Crit(W_{k[\omega_0]}) 
\end{equation*}
and constants $c_V(p) \in \Z$ such that 
\begin{equation*}
\Theta^{-1}_{k[\omega_0]}(\pd( V))\big|_p = \left\{ 
\begin{matrix} c_V(p) + O(k^{-1}),\,p \in \Lambda_V \\
O(k^{-1}),\, p \notin \Lambda_V.
\end{matrix}\right.
\end{equation*}
So, for fixed $V$, we have 
\begin{align*}
&K^{(0)}_{k[\omega_0]}\left(\Theta^{-1}_{k[\omega_0]}(e^{-\ii [\omega_0]}), \Theta^{-1}_{k[\omega_0]}(\ch(L)) \Theta^{-1}_{k[\omega_0]}(\pd(V))\right)\\
& = \sum_{p \in \Lambda_V} c_{V}(p)\frac{\Theta^{-1}_{k[\omega_0]}(\ch(L))}{\prod_i x^2_i \det \nabla^2 W_{k[\omega_0]}}\big|_p \,\Theta^{-1}_{k[\omega_0]}(e^{-\ii [\omega_0]})\big|_p\frac{\Omega_{k[\omega_0]}}{\Omega_0}\big|_p + O(k^{-1}).
\end{align*}
Setting
\begin{align*}
\Psi_p( \xi ) := \frac{\xi|_p}{\prod_i x^2_i \det \nabla^2 W_{k[\omega_0]}}\big|_p \,\Theta^{-1}_{k[\omega_0]}(e^{-\ii [\omega_0]})\big|_p\frac{\Omega_{k[\omega_0]}}{\Omega_0}\big|_p
\end{align*}
gives a linear function 
%\begin{equation*}
$\Psi_p \in \Hom(\GM(W(k\omega_0))|_{z = 0}, \C)$
%\end{equation*}
which is then induced by integration along a complex cycle $\Gamma_p$, with
%\begin{equation*}
$[\Gamma_p] \in H_n(\cY_{q_k}, \{\Rea(W(k\omega_0)/z) \gg 0\}; \Z) \otimes \C$, 
%\end{equation*}
that is, we have, in the sense of \cite{CoatesCortiIritani_hodge}, Section 6.2,
\begin{equation*}
\Psi_p(\xi) = \int_{\Gamma_p} e^{-W(k\omega_0)/z}\Omega^{(k)},\,z \sim 0,
\end{equation*}
where 
%\begin{equation*}
$\Omega^{(k)} := \frac{\Omega_{k[\omega_0]}}{\Omega_0} \Theta^{-1}_{k[\omega_0]}(e^{-\ii [\omega_0]}) \in \GM(W(k\omega_0))$.
%\end{equation*}
So, setting 
\begin{align*}
& \Gamma_L := \sum_{p\in\Crit(W_{k[\omega_0]})} \Theta^{-1}_{k[\omega_0]}(\ch(L))|_p\Gamma_p,\,\Gamma_{L, V} := \sum_{p \in \Lambda_V} c_V(p) \Theta^{-1}_{k[\omega_0]}(\ch(L))|_p\Gamma_p, 
\end{align*}
we find
\begin{align*}
K^{(0)}_{k[\omega_0]}\left(\Theta^{-1}_{k[\omega_0]}(e^{-\ii [\omega_0]}), \Theta^{-1}_{k[\omega_0]}(\ch(L)) \Theta^{-1}_{k[\omega_0]}(\pd( V ))\right) = \int_{\Gamma_{L,V}} e^{-W(k\omega_0)/z} \Omega^{(k)},
\end{align*}
and similarly
\begin{align*}
K^{(0)}_{k[\omega_0]}\left(\Theta^{-1}_{k[\omega_0]}(e^{-\ii [\omega_0]}), \Theta^{-1}_{k[\omega_0]}(\ch(L))\right) = \int_{\Gamma_L} e^{-W(k\omega_0)/z} \Omega^{(k)},
\end{align*}
as $z \to 0^+$.

\addcontentsline{toc}{section}{References}
 
\bibliographystyle{abbrv}
 \bibliography{biblio_dHYM}

\noindent SISSA, via Bonomea 265, 34136 Trieste, Italy\\
Institute for Geometry and Physics (IGAP), via Beirut 2, 34151 Trieste, Italy\\
jstoppa@sissa.it    
\end{document}